\renewcommand*{\HyperDestNameFilter}[1]{\jobname-#1} 
\numberwithin{equation}{section}
\newcommand\blfootnote[1]{%
\begingroup
\renewcommand\thefootnote{}\footnote{#1}%
\addtocounter{footnote}{-1}%
\endgroup
}
  \newcommand{\Addresses}{{% additional braces for segregating \footnotesize
  \bigskip
  \footnotesize

\textsc{CNRS, IMJ-PRG, Sorbonne Universit\'{e}, 4 place Jussieu, 75005 Paris, France}\par\nopagebreak
  \textit{E-mail address}, G.~Baldi: \texttt{baldi@imj-prg.fr} 

}}
\theoremstyle{plain}
\newtheorem{theor}{Theorem}[section]
\newtheorem{conj}[theor]{Conjecture}
\newtheorem{defi}[theor]{Definition}
\newtheorem{prop}[theor]{Proposition}
\newtheorem{cor}[theor]{Corollary}
\theoremstyle{definition}
\newtheorem{rmk}[theor]{Remark}
\newtheorem{question}[theor]{Question}
\theoremstyle{remark}
\numberwithin{equation}{subsection}
\newcommand{\RR}{\mathbb{R}}
\newcommand{\CC}{\mathbb{C}}
\newcommand{\NN}{\mathbb{N}}
\newcommand{\ZZ}{\mathbb{Z}}
\newcommand{\QQ}{\mathbb{Q}}
\newcommand{\HH}{{\mathbf H}}
\newcommand{\LL}{{\mathbf L}}
\newcommand{\an}{\textnormal{an}}
\newcommand{\deff}{\textnormal{def}}
\newcommand{\ad}{\textnormal{ad}}
\newcommand{\der}{\textnormal{der}}
\newcommand{\MT}{\mathbf{MT}}
\DeclareMathOperator{\End}{End}
\DeclareMathOperator{\Hom}{Hom}
\DeclareMathOperator{\Gl}{GL}
\DeclareMathOperator{\Sl}{SL}
\newcommand{\Gm}{\mathbb{G}_{\textnormal{m}}}
\newcommand{\codim}{\operatorname{codim}}
\DeclareMathOperator{\NL}{NL}
\newcommand{\VV}{{\mathbb V}}
\newcommand{\HL}{\textnormal{HL}}
\newcommand{\atyp}{\textnormal{atyp}}
\newcommand{\typ}{\textnormal{typ}}
\newcommand{\pos}{\textnormal{pos}}
\newcommand{\N}{\mathbb{N}}
\newcommand{\prim}{\textnormal{prim}}
\newcommand{\Z}{\mathbb{Z}}
\newcommand{\Q}{\mathbb{Q}}
\newcommand{\R}{\mathbb{R}}
\newcommand{\Oo}{\mathcal{O}}
\newcommand{\F}{\mathcal{F}}
\newcommand{\Hh}{\mathbb{H}}
\newcommand{\C}{\mathbb{C}}
\newcommand{\Qbar}{\overline{\mathbb{Q}}}
\newcommand{\GL}{\mathbf{GL}}
\newcommand{\G}{{\mathbf G}}
\newcommand{\fg}{\mathfrak{g}}
\newcommand{\cA}{\mathcal{A}}
\begin{document}

\title{Hodge theory and o-minimality at CIRM}\blfootnote{\emph{2020
    Mathematics Subject Classification}. 14D07, 14C30, 14G35, 22F30,
  03C64.}\blfootnote{\emph{Key words and phrases}. Hodge theory and
  Mumford--Tate domains, Functional transcendence, Typical and atypical
  intersections, o-minimality.}\date{\today}  
\author{Gregorio Baldi} 

\begin{abstract}
We discuss the relationship between o-minimality and the so called Zilber-Pink conjecture. Since the work of Pila and Zannier, algebraization theorems in o-minimal geometry had profound impacts in Diophantine geometry (most notably on the study of special points in abelian and Shimura varieties). We will first focus on functional transcendence, discussing various recent and spectacular Ax-Schanuel theorems, and the related geometric part of Zilber-Pink. Armed with these tools, we will study the distribution of the Hodge locus of an arbitrary variation of Hodge structures (the typical/atypical dichotomy) and present some recent applications. We will conclude by describing the algebraicity and quasiprojectivity of images of period maps.
\end{abstract}

\maketitle

\epigraph{\emph{In memory of Nicola Gatti (1990--2023), whose boundless curiosity and passion for schemes would have made him a treasured reader.}}

\tableofcontents

\newpage

\section*{Preface}

\subsection{Hodge theory}
Compact Kähler manifolds are unique in that they are equipped with several compatible structures: complex, Riemannian, and symplectic. This rich interplay makes them fundamental objects in mathematics. Classical Hodge theory \cite{zbMATH03076397} enhances the classical topological invariants of the underlying space of a Kähler manifold 
$X$ by incorporating a $\C^*$-action. Building on de Rham’s work, the key insight is that for any given Riemannian metric on $X$, every cohomology class has a canonical \textbf{harmonic} representative. Since complex differential forms on can be uniquely expressed as sums of forms of type $(p, q)$, the fact that the $(p,q)$ components of a harmonic form remain harmonic gives rise to the celebrated Hodge decomposition. This decomposition splits the cohomology of $X$ with complex coefficients into a direct sum of complex vector spaces:
 \begin{displaymath}
\qquad \qquad  \qquad \qquad   H^n(X,\C)= \bigoplus _{p+q=n} H^{p,q} (X)  \qquad \qquad (\textbf{HoDec}).
 \end{displaymath}
This decomposition is elegantly encoded by a $\C^*$-action, where $H^{p,q}$ is the subspace on which $z\in \C^*$ acts as $z^p\overline{z}^q$. The symmetry under complex conjugation (namely $\overline{H^{p,q}}=H^{q,p}$) reflects the fact that $\C^*$ is viewed as a real algebraic torus (and usually denoted by $\mathbb{S}$).

Algebraic Geometry, on the other hand, studies algebraic varieties over complex numbers--the solution sets of algebraic equations with complex coefficients. The Kodaira embedding theorem characterizes smooth complex projective varieties among all compact Kähler manifolds and underpins the concept of a polarized Hodge structure, which grants favorable properties to the period domains of such objects that play a central role in Griffiths' approach \cite{Griffiths, MR2918237} to Hodge theory. Another key distinction between algebraic and general Kähler manifolds is that while general Kähler manifolds may not admit complex submanifolds, algebraic varieties possess them in abundance. For instance, an algebraic subvariety $Z\subset X$ of codimension $j$ (or more generally, a cycle) gives rise to an \textbf{algebraic} cohomology class $[Z]\in H^{2k} (X,\Q)$, whose image in $H^{2k} (X,\C)$ lies in $H^{k,k}(X)$ relatively to (HoDec). A rational cohomology class lying in some $H^{k,k}(X)$ is called a \textbf{Hodge class} (equivalently those are classes fixed by the action of $\C^*$). This suggests the tempting belief that the Hodge structure of a compact complex variety is governed by the geometry of its subvarieties, or more precisely, its Chow group.  A special case of this belief is indeed one of the Clay millennium problems:
\begin{conj}[Hodge]
A class $\lambda \in H^{*}(X,\Q)$ is algebraic if and only if it is Hodge.
\end{conj}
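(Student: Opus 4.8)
The plan is to begin with the easy implication and then confront the hard one. If $\lambda=[Z]$ for an algebraic cycle $Z$ of codimension $k$ on $X$, then $\lambda\in H^{2k}(X,\Q)$ and, by integrating against forms of complementary type (a current supported on $Z$ pairs to zero with forms that are not of type $(\dim X-k,\dim X-k)$), the image of $\lambda$ in $H^{2k}(X,\C)$ lies in $H^{k,k}(X)$; equivalently $\lambda$ is fixed by the $\C^*$-action, i.e.\ Hodge. This direction is classical and is not where any difficulty lies.

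For the converse --- producing an algebraic cycle out of a Hodge class --- I would first reduce to $X$ smooth projective (using resolution of singularities and the Hodge--Lefschetz package) and, via K\"unneth and the Lefschetz operator, to \emph{primitive} Hodge classes. In codimension one the statement is then the Lefschetz $(1,1)$-theorem: the exponential sequence $0\to\ZZ\to\OO_X\to\OO_X^\times\to 0$ identifies $\mathrm{Pic}(X)=H^1(X,\OO_X^\times)$ with classes in $H^2(X,\ZZ)$ mapping to zero in $H^2(X,\OO_X)$, which after tensoring with $\Q$ is exactly $H^2(X,\Q)\cap H^{1,1}$. For higher codimension I would attempt to bootstrap from this: hard Lefschetz and the Lefschetz hyperplane theorem let one transfer Hodge classes to the middle cohomology of smooth hyperplane sections, and one would try to ``spread'' a given Hodge class into a range where induction on $\dim X$ applies, combining it with divisor classes supplied by the codimension-one case.

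A complementary, more structural attack is to enlarge the supply of ``cycle-like'' classes. Deligne's theorem that Hodge classes on abelian varieties are \emph{absolute} (stable under $\Aut(\C)$ in every de Rham--Betti realization) shows such classes behave exactly as algebraic cycles are expected to, and the Kuga--Satake construction, attaching to a polarized weight-two Hodge structure of K3 type an abelian variety, reduces many Hodge classes on K3 surfaces and their products to that case. The heuristic program is then: show every Hodge class is absolute, establish the variational Hodge conjecture (Grothendieck's invariant-cycle form), and invoke the standard conjectures to produce algebraic K\"unneth and Lefschetz projectors; a motivic formalism would then force algebraicity. The o-minimal results surveyed in these notes enter here as a consistency check and a natural framework for any ``family'' argument: definability of period maps (Bakker--Klingler--Tsimerman) recovers the Cattani--Deligne--Kaplan theorem that the locus where a Hodge class persists is an algebraic subvariety of the base, precisely the kind of algebraicity one would need when propagating a class through a variation.

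The step I expect to be the obstruction --- and the reason the conjecture has resisted since 1950 --- is the actual construction of a subvariety with prescribed class. The codimension-one case works only because $H^{1,1}$ is literally the obstruction to lifting a topological line bundle to a holomorphic one; there is no ``exponential sequence'' computing higher Chow groups, so the bootstrap above has no honest base beyond divisors. Every other reduction I have described is conditional on the standard conjectures or on the variational Hodge conjecture, statements of essentially the same depth. So I can organize the problem cleanly --- trivial direction dispatched, reduction to primitive classes on smooth projective $X$, codimension one via Lefschetz $(1,1)$, attempted propagation in families using hard Lefschetz and definability of periods --- but the crucial move, exhibiting the cycle, I do not know how to make, and I expect any genuine attempt to stall exactly there.
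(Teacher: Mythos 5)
The statement you were asked about is not a theorem of the paper at all: it is the Hodge conjecture itself, stated in the preface as one of the Clay millennium problems, and the paper offers no proof of it (nor does anyone). So there is no ``paper proof'' to match, and your proposal, by its own admission, does not supply one either. What you do establish correctly is the easy implication --- an algebraic cycle class of codimension $k$ lands in $H^{k,k}$, hence is fixed by the $\C^*$-action and is a Hodge class --- together with the standard reductions: to smooth projective $X$, to primitive classes, and the genuine base case in codimension one via the exponential sequence (Lefschetz $(1,1)$). Those parts are sound and are exactly the classical evidence the paper alludes to (Lefschetz $(1,1)$, Deligne's absoluteness of Hodge classes on abelian varieties, Cattani--Deligne--Kaplan on the algebraicity of Hodge loci).

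The gap is the one you name yourself: the converse direction in codimension $\geq 2$, i.e.\ the actual construction of a cycle with prescribed Hodge class. Your proposed bootstrap via hard Lefschetz and hyperplane sections cannot work as stated, because the Lefschetz hyperplane theorem transfers cohomology in degrees \emph{below} the middle, while the classes one needs to produce in higher codimension live precisely in middle or higher degrees of the sections, where no injectivity or surjectivity statement is available; and the ``structural'' route through absoluteness, the variational Hodge conjecture, and the standard conjectures is, as you say, conditional on statements of comparable depth. In short, your write-up is an accurate map of the known terrain around the conjecture, but it proves only the trivial direction; the statement remains open, and no argument in the paper (whose actual content concerns Ax--Schanuel, the Zilber--Pink dichotomy for Hodge loci, and definability of period maps) claims otherwise.
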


This conjecture remains a central open problem in mathematics, lying at the intersection of Complex and Algebraic Geometry. It provides a simple criterion to determine which topological cycles of a smooth projective algebraic variety can be represented by algebraic cycles. Finally Deligne \cite{MR0498551} later vastly generalized Hodge’s results, showing that the cohomology of any complex algebraic variety  (not necessarily compact) is functorially endowed with a structure called a mixed $\Z$-Hodge structure.

\subsection{Cycles and Hodge classes in families: the Hodge locus}
Recent years have seen significant activity in understanding how algebraic cycles vary within families, following a path laid in the previous century by figures such as Max Noether, Lefschetz, Weil, Griffiths, Grothendieck, and Deligne, among others. For a smooth projective family of algebraic varieties $f:\mathcal{X}\to S$, the Hodge structures associated with each fiber $\mathcal{X}_s:= f^{-1}(s)$ enrich the local system $R^*f_* \Z_{\operatorname{prim}}$ of primitive cohomology with the structure of 
$\VV$, a \textbf{polarized variation of Hodge structures} (VHS). Since Hodge theory fundamentally compares two distinct algebraic structures in a transcendental fashion, understanding how algebraic cycles and Hodge classes vary with $s\in S(\C)$ is a challenging problem. This has led to the study of the Hodge locus, a subset of the base where the dimension (over $\Q$) of the cohomology group $H^{k,k}(\mathcal{X}_s,\Q)$, or possibly some tensor construction thereof, is larger than expected.

In 1995, Cattani-Deligne-Kaplan \cite{CDK} achieved a breakthrough in Hodge theory by proving that the aforementioned Hodge locus of $(S,\VV)$ is a countable union of \textbf{algebraic} subvarieties of $S$. More recently, this result has been established using methods from o-minimal geometry \cite{BKT}. The Cattani-Deligne-Kaplan result is regarded by many as the most compelling evidence for the Hodge conjecture’s validity, alongside Deligne’s proof that Hodge classes on abelian varieties are absolute and various special cases such as the Lefschetz (1,1) theorem, and results by Schoen and Markman on Hodge-Weil 4-folds. The Hodge locus also plays a central role in conjectures like André-Oort and Zilber-Pink, which will be discussed in more detail later.
 
 A simple yet rich example, still far from being fully understood, involves hypersurfaces. A hypersurface $X$
 of degree $d$ and dimension $n$ is defined as the zero set of a homogeneous polynomial $F$ of degree 
$d$ in $n+2$ variables. It is smooth if the partial derivatives of 
$F$ do not vanish simultaneously. %A well-known example is the Fermat curve, a smooth compact curve of genus $(d-1)(d-2)/2$, given by:
%\begin{displaymath}
%X=\{[X_0,X_1,X_2]\in \mathbb{C}\mathbb{P}^2: X_0^d+X_1^d+X_2^d=0\}.
%\end{displaymath}

For fixed $n$ and $d$, the parameter space of hypersurfaces in $\mathbb{CP}^{n+1}$ of degree $d$ is simply the complex vector space of homogeneous polynomials of degree $d$, denoted  by $V=\C[X_0, \dots, X_{n+1}]_d$. However, the parameter space of \textbf{smooth} hypersurfaces $U_{n,d}=V - \Delta$  is more intricate. The relevant VHS enriches the (primitive) cohomology $H^n(X,\Z)$, and the Hodge locus remains far from understood. In the case $n=2$ and $d \geq 4$, this locus contains the \textbf{Noether-Lefschetz locus}, which can be defined purely using algebraic geometry:

\begin{displaymath}
\NL_d:=\{[X]\in U_{2,d} : \operatorname{Pic}(\mathbb{CP}^3)\to
\operatorname{Pic}(X) \text{  is not an isomorphism} \}. 
\end{displaymath}
For a surface $X$ outside $ \NL_d$,
every curve on $X$ has the pleasant and useful property that it is the complete intersection of $X$
with another surface in $\mathbb{CP}^{3}$. This object has been the subject of many beautiful studies by Griffiths, Green, Voisin, Ciliberto, Harris, Miranda and others.

The most pressing challenges in the field revolve around two fundamental questions:

\begin{question}\label{mainquestion}
What can we say about about Hodge loci? What do Hodge loci reveal to us?
\end{question}

\subsection{Organization of the Lecture Notes}
This is the second part of a lecture series on Hodge theory and o-minimality. The key object of study is the \emph{period map} associated with the aforementioned variations of Hodge structures (VHS, from now on). The first part \cite{ben}, delivered by B. Bakker, covered the following topics:
\begin{enumerate}
    \item Basic definitions and local definability of maps;
    \item Definable structures on period spaces;
    \item Global definability of period maps and the algebraicity of Hodge loci.
\end{enumerate}

The second part, consisting of four 1-hour lectures, discusses:
\begin{itemize}
    \item[(a)] Functional transcendence and the Ax-Schanuel theorem for period maps;
    \item[(b)] Distribution of the Hodge locus;
    \item[(c)] Algebraicity and quasi-projectivity of images of period maps.
\end{itemize}

We will build on and use the language of \cite{ben}, and refer to it whenever possible. Item (c) is a natural continuation of  \cite{ben}, whereas (a) and (b) were not covered in \emph{op. cit.}.

There are already many excellent expository notes on this subject, so we have tried to focus on topics not already addressed in the following recent references:
\begin{itemize}
    \item Bakker, Tsimerman, \emph{Lectures on the Ax-Schanuel conjecture} \cite{zbMATH07300480};
    \item Fresán, \emph{Hodge theory and o-minimality (after Bakker, Brunebarbe, Klingler, Tsimerman)} \cite{Fresan2020};
    \item Tsimerman, \emph{Functional transcendence and arithmetic applications} \cite{zbMATH07250485};
    \item Klingler, \emph{Hodge theory between algebraicity and functional transcendence} \cite{zbMATH07823061}.
\end{itemize}

In particular:
\begin{itemize}
    \item Our exposition on functional transcendence follows the most recent advances (albeit this approach no longer relies on o-minimality), with original sources in \cite{2021arXiv210203384B, 2022arXiv220805182B};
    \item We compile here most of the known results concerning the Zilber-Pink conjecture for variations of Hodge structures;
    \item Each section includes several exercises of varying difficulty, which are collected in the final Section 5. References to Section 5.n.m indicate the m-th exercise in the n-th lecture.
\end{itemize}
Finally, the notes cover roughly four hours of material and we tried to stick to the program exposed in Luminy. In particular the exercises play a fundamental role to complete the discussions and provide many complementary proofs that were not exposed during the lectures.

\begin{rmk}
To conclude, we wish to mention also the beautiful monographs written by Zannier \cite{zbMATH06022187} and Pila \cite{zbMATH07516542}. Both discuss related results and focus mainly on the arithmetic side, rather the setting of VHS. Nevertheless, they are highly recommended to anyone who wants to learn more about unlikely intersections!
\end{rmk}

\subsection{Acknowledgements}
These notes were prepared for a mini-course held during a Research School as part of the CIRM-SMF program (15–19 January, Luminy, France, 2024). We thank the organizers and participants for the opportunity to share this material and for their valuable feedbacks. During the final stages of this work, the author was partially supported by the grant ANR-HoLoDiRibey of the Agence Nationale de la Recherche.

\newpage

\section{Lecture 1: (Historical) Overview}\label{lect1}

The goal of this section is to describe various instances of the Zilber-Pink philosophy (about the finiteness of \emph{atypical intersections}) and the so-called \emph{Pila-Zannier strategy}. Historically, this strategy introduced the combination of o-minimality and functional transcendence that will underpin most of our lectures \cite{zbMATH05292756, MR2800724}. For additional insights, we refer to Scanlon’s exposition on o-minimality as an approach to the André-Oort conjecture \cite{zbMATH07754034}.

As a roadmap, one might conceptualize ``three stages'' in the theory of atypical (also known as \emph{unlikely}) intersections, each of increasing complexity: (1) complex abelian varieties $A$ or, more generally, semi-abelian varieties, (2) Shimura varieties $\text{Sh}_K(G,X)$, and (3) variations of Hodge structures $(S, \VV)$. O-minimality plays a role in two critical ways: first, in the arithmetic aspects developed in (1) and extended to (2), and second, in providing a framework where the period map $S \to \Gamma \backslash D$ can naturally exist.

\begin{rmk}
The most general setting, which will not be discussed here, involves \emph{admissible graded-polarizable integral variations of mixed Hodge structures}. For details, see \cite[Sec.~4]{2024arXiv240616628B}. The Zilber-Pink conjecture stated in \emph{op. cit.} indeed implies the other cases discussed herein.
\end{rmk}

\subsection{Some Number Theory}

In this section we first introduce the \emph{Zilber-Pink conjecture} (ZP, henceforth) for the moduli space of principally polarized abelian varieties of dimension $g$, $\mathcal{A}_g$, which is the period space of weight one Hodge structures on the standard symplectic lattice \cite{ben}. ZP is a far-reaching generalization of the \emph{André-Oort} (AO) and \emph{Mordell-Lang} conjectures, which we now recall. A special case of AO can be summarized as follows \cite{zbMATH01251645}:

\begin{theor}[André]\label{thmao}
Let $V$ be an irreducible algebraic curve in the complex affine plane, which is neither horizontal nor vertical. Then $V$ is a modular curve $Y_0(N)$ for some $N > 0$ if and only if it contains infinitely many points $(j', j'') \in \C^2$ such that $j'$ and $j''$ are $j$-invariants of elliptic curves with complex multiplication.
\end{theor}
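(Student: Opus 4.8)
The plan is to run the Pila--Zannier strategy. One direction is elementary: $Y_0(N)$ parametrizes pairs $(E,E')$ of elliptic curves linked by a cyclic $N$-isogeny, and an isogeny preserves the property of having complex multiplication, so starting from the infinitely many CM $j$-invariants (one for each imaginary quadratic order, all algebraic) one produces infinitely many points $(j',j'')$ on $Y_0(N)$ with $j'$, $j''$ both singular moduli. It remains to prove the converse, so assume from now on that $V$ contains infinitely many CM points.

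Since $V$ already carries infinitely many algebraic points (CM $j$-invariants are algebraic integers), those points are Zariski dense in $V$ and hence $V$ is defined over a number field $k$; then $\Gal(\overline{\Q}/k)$ permutes the CM points of $V$, and a Galois conjugate of a CM point is again a CM point of $V$ of the same discriminant. Pull back by the uniformization: on a product $\mathcal{F}\times\mathcal{F}$ of standard fundamental domains for $\mathrm{SL}_2(\Z)\curvearrowright\Hh$ the map $j\times j\colon \mathcal{F}\times\mathcal{F}\to\C^2$ is definable in $\Ranexp$, so $Z:=(j\times j)^{-1}(V)\cap(\mathcal{F}\times\mathcal{F})$ is a definable curve. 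A CM point of discriminant $D$ has a reduced representative $\tau\in\mathcal{F}$ which is quadratic over $\Q$ with height polynomially bounded in $|D|$, while by Siegel's (Brauer--Siegel) lower bound on class numbers the $\Gal(\overline{\Q}/k)$-orbit of a CM point with a coordinate of discriminant $D$ has size $\gg_\varepsilon |D|^{1/2-\varepsilon}$. Because for each fixed discriminant there are only finitely many CM points, $V$ must contain CM points whose coordinates have arbitrarily large discriminant; writing $T$ for (a bound on) the largest discriminant occurring in such a point and its conjugates, we obtain $\gg_\varepsilon T^{1/2-\varepsilon}$ quadratic points of $Z$ of height $\le T^{C}$ for a fixed $C$.

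Now feed this into the Pila--Wilkie counting theorem, in the form counting algebraic points of degree $\le 2$: the number of such points of $Z$ of height $\le H$ lying outside the union of the connected positive-dimensional semialgebraic subsets of $Z$ is $O_\varepsilon(H^\varepsilon)$. Comparing with the lower bound of the previous paragraph and taking $\varepsilon$ small, for $T$ large we are forced to conclude that a positive-dimensional semialgebraic set $C\subset Z\subset\Hh^2$ exists. The remaining, and genuinely deep, input is functional transcendence: the Ax--Lindemann theorem for the $j$-function (the prototype of the Ax--Schanuel statements developed later in these notes) says that the Zariski closure of $(j\times j)(C)$ is a weakly special subvariety of $\C^2$ --- a point, a horizontal or vertical line, or an irreducible component of a modular correspondence. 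Since $j\times j$ is a local biholomorphism, $(j\times j)(C)$ has positive dimension, and since it is contained in the irreducible curve $V$ which is neither horizontal nor vertical, we get $V=Y_0(N)$ for the corresponding $N$.

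The main obstacle is the Ax--Lindemann step: everything preceding it is ``soft'', combining o-minimal definability of the restricted uniformization with the Pila--Wilkie point count and the classical (ineffective) class-number bound, whereas recognizing the semialgebraic set $C$ as a piece of a modular correspondence is exactly the functional-transcendence statement whose proof occupies a large part of the lectures. A minor technical point to handle is that the two $j$-invariants of a CM point of $V$ may a priori have different discriminants $D_1,D_2$; one simply runs the orbit/height bookkeeping with $T=\max(|D_1|,|D_2|)$, using whichever coordinate carries the large discriminant to produce the Galois orbit, and notes that all conjugate pairs still have height $\ll T^{C}$, so the orbit lower bound genuinely outgrows the Pila--Wilkie bound.
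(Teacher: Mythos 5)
Your proof is correct and is exactly the Pila--Zannier argument these notes have in mind: the paper does not reprove \Cref{thmao} (it cites Andr\'e), but the strategy it sets up --- definability of the restricted uniformization in $\Ranexp$, Galois-orbit lower bounds for CM points via Siegel's class-number estimate, the Pila--Wilkie count for quadratic points of bounded height, and Ax--Lindemann (\Cref{althm1}) to recognize the resulting positive-dimensional piece --- is precisely what you execute, in the spirit of \Cref{ex11} and \Cref{ex12}. The only step to phrase a bit more carefully is the last one: replace the real semialgebraic set $C$ by a positive-dimensional complex algebraic subvariety of $(j\times j)^{-1}(V)$ (complexify a semialgebraic arc; it stays inside the analytic preimage) and apply the ``maximal algebraic subvarieties of the preimage are weakly special'' form of Ax--Lindemann, since the image form as stated in \Cref{althm1} allows the Zariski closure to be all of $Y_0(1)^2$ and so, applied naively to a complexification not contained in the preimage, would give no information about $V$.
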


Here, an elliptic curve $E$ with complex multiplication (CM) is an elliptic curve that has an endomorphism ring larger than the integers. The complex affine plane is the $j$-plane, $Y_0(1) \cong \operatorname{SL}_2(\Z) \backslash \mathbb{H}$ (cf.~\cite{ben}).

The Mordell-Lang conjecture, proven by Faltings, includes the assertion that a smooth projective curve of genus $>1$ has only finitely many $K$-points over any number field $K$, as well as the Manin-Mumford conjecture which states the following \cite{zbMATH03929174}:

\begin{theor}[Raynaud]\label{mmthm}
Let $A$ be a complex abelian variety and $V\subset A$ an irreducible subvariety. $V$ is a torsion cosets $a + B$, where $B$ is an abelian subvariety of $A$ and $a \in A$ is a torsion point if an only if it contains a Zariski dense set of torsion points.
\end{theor}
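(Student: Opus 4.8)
The plan is to prove the non-trivial implication by the \emph{Pila--Zannier strategy}, the prototype for the interaction of o-minimality and functional transcendence that underlies these notes; the converse is immediate, since if $V=a+B$ with $a$ of finite order $m$ and $B$ an abelian subvariety, then $\{a+t: t\in B[N]\}$ consists of points of order dividing $mN$ and is Zariski dense in $a+B=V$. To begin, one reduces to the case where $A$ and $V$ are defined over a number field $K$: spreading $A$ and $V$ out over a finitely generated $\Z$-algebra and specialising, the rigidity of torsion (an $N$-torsion point deforms to an $N$-torsion point) lets one descend the situation, together with the hypothesis, first to $\Qbar$ and then to some $K$. Fix a uniformisation $\pi\colon\C^g\to A(\C)=\C^g/\Lambda$ and a bounded semialgebraic fundamental domain $\mathcal F$ for $\Lambda$ acting on $\C^g\cong\R^{2g}$; then $\pi|_{\overline{\mathcal F}}$ is definable in $\R_{\mathrm{an}}$, hence so is $Z:=(\pi|_{\overline{\mathcal F}})^{-1}(V(\C))\subseteq\R^{2g}$. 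A torsion point of exact order $N$ lifts inside $\mathcal F$ to a point whose coordinates in a $\Z$-basis of $\Lambda$ lie in $\tfrac1N\Z$, hence to a rational point of $Z$ of height $O(N)$.

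The arithmetic input is Masser's lower bound on torsion fields: there are $c,\delta>0$, depending only on $A/K$, with every $N$-torsion point having at least $cN^\delta$ conjugates under $\Gal(\overline K/K)$. As $V$ is defined over $K$, these conjugates are again $N$-torsion points on $V$, giving $\ge cN^\delta$ rational points of height $O(N)$ on $Z$. Now apply the Pila--Wilkie theorem: the number of rational points of height $\le T$ on $Z\setminus Z^{\mathrm{alg}}$, where $Z^{\mathrm{alg}}$ is the union of all connected positive-dimensional semialgebraic subsets of $Z$, is $O_\varepsilon(T^\varepsilon)$. Taking $\varepsilon<\delta$ and letting $N\to\infty$, for every sufficiently large $N$ realised as an order of a torsion point of $V$ the set $Z^{\mathrm{alg}}$ contains at least $\tfrac c2N^\delta$ lifts of $N$-torsion points of $V$; in particular $\pi(Z^{\mathrm{alg}})\subseteq V$ contains torsion points of arbitrarily large order, and, $\pi$ being holomorphic with $\pi^{-1}(V)$ a closed analytic set, the Zariski closure in $\C^g$ of every positive-dimensional semialgebraic piece of $Z$ is an algebraic subvariety contained in $\pi^{-1}(V)$.

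Finally I would invoke the \emph{flat Ax--Lindemann theorem} for $A$: a maximal irreducible algebraic subvariety $W\subseteq\C^g$ with $\pi(W)\subseteq V$ is a coset $w+U$ of a $\Lambda$-rational linear subspace, and $B:=\pi(U)$ is an abelian subvariety, so $\pi(w)+B$ is a maximal coset of an abelian subvariety contained in $V$. Combining with the previous step, the Zariski closure of $Z^{\mathrm{alg}}$ lies in finitely many such cosets, so $V$ contains, through infinitely many torsion points, translates of positive-dimensional abelian subvarieties; a Noetherian argument then forces $\Stab_A(V)^{0}$ to be positive-dimensional (were it trivial, these cosets would be proper in $V$, confining infinitely many torsion points to a proper subvariety, and one iterates). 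Set $B_0:=\Stab_A(V)^{0}$. If $\dim B_0=\dim V$ then $V$ is a coset of $B_0$ containing a torsion point, hence a torsion coset, and we are done; otherwise pass to $A/B_0$, where the image $V'$ of $V$ has trivial connected stabiliser and still carries Zariski-dense torsion, and conclude by induction on $\dim A$ that $V'$ is a point, so $V$ is a torsion coset of $B_0$.

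I expect the main obstacle to be the arithmetic ingredient --- Masser's polynomial lower bound for the degree of torsion points --- which, unlike the o-minimal counting step and the functional-transcendence step (both usable as black boxes in the framework of these notes), relies on transcendence theory and isogeny estimates; a secondary difficulty is the endgame, namely extracting a positive-dimensional stabiliser from the Ax--Lindemann output and arranging the descending induction on $\dim A$ so that Zariski-density of torsion and definability over a number field persist at each stage.
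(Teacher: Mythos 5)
Your proposal is correct in outline and follows exactly the route these notes advocate: the text does not reprove \Cref{mmthm} (it is cited to Raynaud), but the Pila--Zannier strategy sketched in \Cref{ex11} and \Cref{ex12} --- definable fundamental domain, lifting torsion points to rational points of small height, Masser's Galois lower bound, Pila--Wilkie, and the flat Ax--Lindemann input of \Cref{althm1}(2) --- is precisely what you carry out for abelian varieties. The only two places where your sketch is thinner than the standard write-up, namely the descent from $\C$ to a number field and the deduction that $\Stab_A(V)^0$ is positive dimensional (usually done by observing that the union of positive-dimensional abelian cosets contained in $V$ is a countable union of closed subsets, so if it is Zariski dense in the irreducible $V$ a single abelian subvariety must stabilise $V$), are standard completions and do not affect the architecture.
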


Although elementary proofs for these results exist, the one based on o-minimality is the most natural for generalizations to the aforementioned three stages. A key idea, explored in \Cref{ex11} and \Cref{ex12}, links these results to functional transcendence. Specifically, the proofs leverage the following:

\begin{theor}[Ax-Lindemann, first cases]\label{althm1}
We have:
\begin{enumerate}
    \item Let $\pi = (j,j): \mathbb{H} \times \mathbb{H} \to Y_0(1) \times Y_0(1)$ be the complex uniformization map. Let $Y$ be an irreducible algebraic subvariety of $\mathbb{P}^1 \times \mathbb{P}^1$. Each irreducible component of the Zariski closure of $\pi(Y \cap \mathbb{H} \times \mathbb{H})$ is either a point, $Y_0(1)^2$, an horizontal or vertical line, $Y_0(N)$ for some $N$.
    \item Let $\pi : \C^n \to A$ be the complex uniformization map. Let $Y$ be an irreducible algebraic subvariety of $\C^n$. Then each component of the Zariski closure of $\pi(Y)$ in $A$ is a coset $a + B$, where $B$ is an abelian subvariety of $A$ and some $a \in A(\C)$.
\end{enumerate}
\end{theor}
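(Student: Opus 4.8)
The plan is to run the \emph{Pila--Zannier strategy}, treating both parts in parallel. Write $\pi\colon D\to\Gamma\backslash D$ with $(D,\Gamma)=(\mathbb{H}\times\mathbb{H},\Sl_2(\Z)^2)$ in part~(1) and $(D,\Gamma)=(\C^n,\Lambda)$, $A=\C^n/\Lambda$, in part~(2); in each case $\Gamma$ acts through a real algebraic group $G$ (namely $\Sl_2(\R)\times\Sl_2(\R)$, resp.\ $\C^n$ as a real group), and by an \emph{algebraic subvariety of $D$} I mean an analytic component of $\widetilde Y\cap D$ for $\widetilde Y$ algebraic in the relevant compactification. Call a connected such $Y$ \emph{weakly special} if its stabiliser $G_Y=\{g\in G:gY=Y\}$ has a Zariski-dense orbit in $Y$. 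First I would reduce to the standard form of Ax--Lindemann: \emph{for every connected algebraic subvariety $Y'\subseteq D$, $\overline{\pi(Y')}^{\Zar}$ is the image of a weakly special subvariety of $D$.} The stated form follows by decomposing $Y\cap\mathbb{H}^2$ into finitely many pieces, each lying in one algebraic subvariety of $D$, replacing each piece by the inclusion-maximal algebraic $Y'\subseteq D$ with a given Zariski closure $W:=\overline{\pi(Y')}^{\Zar}$, and then reading off which weakly special subvarieties occur --- in $\C^n$ these are translates $v+V_\C$ of complex subspaces defined over $\Lambda\otimes\Q$, projecting to cosets $a+B$; in $\mathbb{H}^2$ they are points, $\mathbb{H}\times\{z\}$, $\{z\}\times\mathbb{H}$, graphs of Möbius maps commensurating $\Sl_2(\Z)$, and $\mathbb{H}^2$, projecting respectively to points, vertical and horizontal lines, the modular curves $Y_0(N)$, and $Y_0(1)^2$.

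So fix $W$ and a maximal $Y'$ with $\overline{\pi(Y')}^{\Zar}=W$. The analytic input is that the restriction of $\pi$ to a suitable fundamental set $\mathcal F\subseteq D$ is definable in $\Ranexp$ (Peterzil--Starchenko for the $j$-function; elementary for $\C^n\to A$) and has polynomial growth in the natural coordinates; consequently $(\pi|_{\mathcal F})^{-1}(W)$ is definable, and I form
\[
\Sigma=\{\,g\in G:\ \pi\big((g\cdot Y')\cap\mathcal F\big)\subseteq W\,\},
\]
the ``translations keeping $Y'$ over $W$ near $\mathcal F$''. This is a definable set, and $\Gamma\subseteq\Sigma$ because $\pi(\gamma\cdot Y')=\pi(Y')\subseteq W$ for every $\gamma\in\Gamma$. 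Since $\Gamma$ has $\gg T^{\delta}$ points of height $\le T$ (for some $\delta>0$) and these all lie in $\Sigma$, the Pila--Wilkie counting theorem forces them off the transcendental part: $\Sigma$ contains connected positive-dimensional semialgebraic blocks, and after a short bootstrap one produces such a block $B\ni\id$.

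Next I would promote $B$ to an algebraic subgroup. Using that $g\mapsto g\cdot Y'$ is algebraic, that $\pi^{-1}(W)$ is a closed analytic set stable under $\Gamma$, and that $Y'$ is the \emph{maximal} algebraic subvariety of $D$ lying over $W$, one argues that the Zariski closure $H$ of the group generated by $B$ is positive-dimensional and satisfies $H\cdot Y'=Y'$; that is, $Y'$ is stabilised by a positive-dimensional algebraic subgroup of $G$. The conclusion is then the functional-transcendence step. In part~(2) it is linear algebra: stability under translation by a positive-dimensional real subspace $V$ makes $Y'$ a union of cosets of the complex span $V_\C$, and taking $V$ maximal together with Zariski-density of $\pi(Y')$ in $W$ forces $Y'=v+V_\C$ with $V_\C$ rational --- this is exactly Ax's 1972 theorem and could simply be quoted, with no o-minimality needed. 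In part~(1), a positive-dimensional algebraic subgroup $H\subseteq\Sl_2(\R)^2$ stabilising an irreducible algebraic $Y'\subseteq\mathbb{H}^2$ must, by transitivity of $\Sl_2(\R)$ on $\mathbb{H}$ and the classification of connected subgroups of $\Sl_2\times\Sl_2$, make $Y'$ one of the five shapes above; pushing forward by $\pi$ gives precisely the claimed list, the graphs of non-arithmetic Möbius maps being the ones with Zariski-dense image, hence absorbed into $W=Y_0(1)^2$ and never maximal.

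The step I expect to be the main obstacle is the passage from the Pila--Wilkie block to a genuine algebraic group, together with the precise setup of $\Sigma$: one must handle the boundary of $\mathcal F$, trade the infinitary condition ``$\pi(gY')\subseteq W$'' for a definable finitary one, and --- crucially --- invoke that the image of monodromy is Zariski-dense in $G$ in order to propagate $H$-invariance of $Y'$ into weak-specialness of $\pi(Y')$. It is also worth noting the logical asymmetry highlighted above: part~(2) is Ax's theorem and bypasses o-minimality entirely, whereas in part~(1) the counting step has no known algebraic substitute --- which is exactly why this ``modular Ax--Lindemann'' served as the prototype for the whole method.
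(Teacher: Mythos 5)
Your overall route --- definable fundamental set, Pila--Wilkie, promotion of a semialgebraic block to a positive-dimensional stabiliser of $Y'$, then classification of weakly special subvarieties (and simply quoting Ax's 1972 theorem for the abelian case) --- is exactly the Pila--Zannier argument the paper intends for this statement: it is the strategy developed in \Cref{ex11} and \Cref{ex12} for the torus, transposed to the modular and abelian settings.

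There is, however, a genuine gap in your counting step. With $\Sigma=\{g\in G:\ \pi\bigl((g\cdot Y')\cap\mathcal F\bigr)\subseteq W\}$ the inclusion $\Gamma\subseteq\Sigma$ is true but carries almost no information: for every $\gamma$ with $\gamma Y'\cap\mathcal F=\emptyset$ the defining condition holds vacuously, and these $\gamma$ are the vast majority of the $\gg T^{\delta}$ lattice points you count. Worse, the locus $\{g\in G: gY'\cap\mathcal F=\emptyset\}$ is itself semialgebraic (Tarski--Seidenberg, since $Y'$ and $\mathcal F$ are semialgebraic), so the positive-dimensional blocks that Pila--Wilkie produces may lie entirely inside it and say nothing about $Y'$; in particular the ``short bootstrap'' to a block through the identity on which $gY'\subseteq\pi^{-1}(W)$ genuinely holds cannot be run from this $\Sigma$. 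What the argument needs --- and what the paper's \Cref{ex11} isolates as its steps (1)--(2) --- is: (a) a definable condition forcing a full-dimensional intersection, e.g. $\dim\bigl(gY'\cap\pi|_{\mathcal F}^{-1}(W)\bigr)=\dim Y'$, so that membership plus analytic continuation really yields $gY'\subseteq\pi^{-1}(W)$; and (b) a lower bound $\gg T^{\delta}$ on the number of $\gamma\in\Gamma$ of height $\le T$ for which $Y'$ actually meets $\gamma^{-1}\mathcal F$ (``$Y'$ passes through many fundamental domains''). Point (b) is the real geometric input, and it is where the polynomial growth you mention must actually be deployed: in the modular case one bounds polynomially the height of the element of $\Sl_2(\Z)^2$ returning a point of $Y'$ to $\mathcal F$ in terms of that point, and uses that $Y'$ crosses unboundedly many translates of $\mathcal F$. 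Without (a) and (b) the appeal to Pila--Wilkie proves nothing. A smaller point: invariance of $Y'$ under a positive-dimensional subgroup $H\subseteq\Sl_2(\R)^2$ gives your five shapes only after complexifying the $H$-orbits inside the complex curve $Y'$ and using its irreducibility and maximality; this is standard but not automatic as stated.
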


These examples suggest why Hodge theory enters the picture: the first involves a family of elliptic curves, and the second concerns an abelian variety, corresponding to a polarized Hodge structure of type $(h^{1,0}=g,h^{0,1}=g)$. One further example brings us closer to the full conjectural framework.

First of all, an irreducible component of a Shimura subvariety of $\mathcal{A}_g$, or of its image under a Hecke operator, is called a \emph{special subvariety} of $\mathcal{A}_g$. Let $S$ be a fixed irreducible subvariety of $\mathcal{A}_g$.  An irreducible subvariety $Y \subset S$ is called \emph{special} (with respect to $S$) if it is a component of $S \cap Sh$, where $Sh$ is a special subvariety of $\mathcal{A}_g$. If $Sh_Y$ is the smallest special subvariety of $\mathcal{A}_g$ containing $Y$ (the so called ``special closure''), we call $Y$ \emph{atypical} if:
\[
\codim_{\mathcal{A}_g}(Y) < \codim_{\mathcal{A}_g}(S) + \codim_{\mathcal{A}_g}(Sh_Y).
\]

\begin{conj}[Zilber-Pink for $\mathcal{A}_g$]\label{conjzp1}
$S$ contains only finitely many maximal special atypical subvarieties.
\end{conj}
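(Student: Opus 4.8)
The plan is to run the \emph{Pila--Zannier strategy}, whose three ingredients are already visible above: global o-minimal definability of the uniformization $\mathbb{H}_g \to \mathcal{A}_g$ on a fundamental set (the main output of the first part of these lectures, \cite{ben}); a functional transcendence input of Ax--Lindemann / Ax--Schanuel type for this uniformization (\Cref{althm1} and its higher-rank generalizations, discussed below); and an arithmetic lower bound on the sizes of Galois orbits of special subvarieties. One argues by contradiction: suppose $S$ contains infinitely many \emph{maximal} special atypical subvarieties $Y_1,Y_2,\dots$, and let $Sh_{Y_i}$ be the special closure of $Y_i$. In the uniformization, each $Sh_{Y_i}$ is cut out by a reductive $\Q$-subgroup $\mathbf{H}_i\subset \mathrm{GSp}_{2g}$ together with a point of the attached sub-period-domain, and one attaches to $Y_i$ a \emph{complexity} $\Delta(Y_i)\in\N$ measuring the arithmetic size of $\mathbf{H}_i$ (discriminant of the relevant order, denominators needed to write $\mathbf{H}_i$ down, and so on). The key structural observation is that these $\mathbf{H}_i$-data range over a single semialgebraic family, so that the pre-special subvarieties of $\mathbb{H}_g$ and their intersections with the (algebraic, hence definable) analytic lift of $S$ assemble into one $\Ranexp$-definable family $Z$.

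The arithmetic heart of the argument --- and what I expect to be the main obstacle --- is a \emph{large Galois orbits} statement: there are constants $c,\delta>0$ such that, over any number field $K$ of definition of $S$, the $\Gal(\Qbar/K)$-orbit of $Y_i$ has at least $c\,\Delta(Y_i)^{\delta}$ elements. For $0$-dimensional $Y_i$ (i.e.\ for André--Oort inside $\mathcal{A}_g$) this is now a theorem, but even there the proof required the averaged Colmez conjecture together with Masser--Wüstholz-style isogeny estimates; for higher-dimensional special subvarieties, and hence for full Zilber--Pink in $\mathcal{A}_g$ with $g$ large, it remains open, and only conditional results (e.g.\ under suitable height/Galois hypotheses) are available. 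Concretely, one would need effective period and isogeny bounds forcing some representative of the $\mathbf{H}_i$-data to have height polynomial in $\Delta(Y_i)$, combined with a geometry-of-numbers estimate controlling how many $\Gamma$-translates such data can admit --- exactly the input that is missing in the general case.

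Granting the Galois bound, one concludes as follows. Each conjugate of $Y_i$ lifts, inside the fundamental set, to a piece of the pre-special subvariety attached to $\mathbf{H}_i$; collecting these over the Galois orbit produces at least $\Delta(Y_i)^{\delta}$ algebraic points of bounded degree and of height polynomial in $\Delta(Y_i)$ lying on $Z$. Since this count grows like a fixed positive power of the height, the Pila--Wilkie counting theorem is violated unless, for infinitely many $i$, these points lie on positive-dimensional semialgebraic ``blocks'' $B_i\subset Z$. Applying the Ax--Schanuel theorem for period maps to $B_i$, together with the algebraicity of $S$, each such block yields a positive-dimensional \emph{weakly special} subvariety of $S$ that is still atypical and contains $Y_i$; the geometric part of Zilber--Pink --- which follows from Ax--Schanuel and bounds the atypical locus of $(S,\VV)$, away from its typical strata, by a finite union of special subvarieties --- then forces all but finitely many of the $Y_i$ to be contained in a fixed finite collection of special subvarieties, contradicting their maximality and pairwise distinctness. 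This contradiction proves the conjecture.
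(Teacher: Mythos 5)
What you have written is not a proof, and it could not be one: the statement you are addressing is stated in the paper as a \emph{conjecture} (Zilber--Pink for $\mathcal{A}_g$), and the paper offers no proof of it --- it is precisely one of the open problems motivating the lectures. Your text is an outline of the Pila--Zannier strategy, and you yourself identify the fatal gap in the second paragraph: the large Galois orbits lower bound for the complexity $\Delta(Y_i)$ of higher-dimensional atypical special subvarieties is not known (even its formulation requires height and isogeny estimates that are currently only available in special cases, e.g.\ for special points via averaged Colmez plus Masser--W\"ustholz, or for curves in $\mathcal{A}_g$ under extra hypotheses). A proof that is explicitly conditional on an open arithmetic input is a conditional result, not a proof of the conjecture.

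Beyond that admitted gap, two further steps in your sketch are not sound as stated. First, the Pila--Wilkie count you invoke must be applied to the \emph{parameter data} of the pre-special subvarieties (rational points describing the groups $\mathbf{H}_i$ and their translates in a definable family), not loosely to ``algebraic points lying on $Z$''; making this precise for positive-dimensional special subvarieties is a genuine piece of work (definability of the family, comparison of heights with the complexity $\Delta$) and cannot be waved through. Second, your concluding step misuses the geometric Zilber--Pink theorem (\Cref{geometricZP}): that theorem only controls the atypical special subvarieties of \emph{positive period dimension}, while the zero-period-dimension part of the atypical locus --- exactly the ``geometrically elusive'' part in the paper's terminology --- is the part that the missing arithmetic ingredient is supposed to handle. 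So even granting a Galois bound, the deduction ``blocks $\Rightarrow$ weakly special $\Rightarrow$ finitely many special subvarieties containing all but finitely many $Y_i$'' does not close the argument without a separate treatment of atypical special subvarieties that the period map contracts to points. In short: the strategy you describe is the accepted roadmap, but the conjecture remains open, and your writeup contains no new idea that bridges the known obstructions.
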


The text then continues with deeper explorations into these conjectures, their applications, and broader philosophical principles, particularly in connection to variations of Hodge structures and period maps.

\subsection{General Expectations}
Before dealing with the general case of VHS, it is worth pausing for a second to reflect on the abstract slogan given by \Cref{thmao}, \Cref{mmthm}, and \Cref{conjzp1} that we discussed so far. We are given a manifold $M$ which naturally comes with a distinguished class of submanifolds $\{ M_i \}_{i \in I}$ exhibiting a \emph{special} behaviour. Zilber-Pink-type results in such settings suggest distribution behaviour of the intersections between a fixed sub-manifold $S \subset M$ and the $M_i$s can often be understood by simple dimension criteria. In particular, one says that a component $Y$ of $S \cap M_i$ is \emph{atypical} if 
\begin{equation}\label{firstatyp}
\codim_{M}(Y) < \codim_{M}(S)+ \codim_{M}(M_i) ,
\end{equation}
and \emph{typical} otherwise. One then expects that:
\begin{itemize}
\item[(1)] $S$ contains only finitely many maximal atypical intersections.
\item[(2)] The following are equivalent: 
\begin{itemize}
\item[(i)] $S$ contains one typical intersection;
\item[(ii)] the collection of typical intersections is dense in $S$; and
\item[(iii)] there exists an $i \in I$ such that:
\begin{displaymath}
\dim S - \codim_{M}(M_i) \geq 0.
\end{displaymath}
\end{itemize}
\end{itemize}
Condition (iii) is simply asserting that a typical intersection could exists for combinatorial reasons.
\subsection{Hodge theory and period maps}\label{periodmap}
There is a version of Zilber-Pink (that recovers \Cref{conjzp1} as a special case) for arbitrary families of varieties/motives. This is given by using the formalism of variation of Hodge structures and their associated period maps, detailed in \cite[Section 4]{ben} as well as in the monograph \cite{MR2918237}. We briefly recall what we need:
\begin{itemize}
\item Let $(S,\VV)$ be a pure polarized integral VHS on a smooth quasi-projective variety $S$;
\item It corresponds to a period map 
\begin{displaymath}
 \Phi: S^{\an} \longrightarrow \Gamma \backslash D,
\end{displaymath}
in the category of complex analytic manifolds ($\Gamma \backslash D$ is in general not algebraic);
\item The period domain $D$ is is canonically an open subset (in the euclidean topology) of an algebraic variety $D^\vee$;
\item $ \Gamma \backslash D$ comes with a collection of \emph{special submanifolds}, the so called sub-Mumford-Tate domains;
\item The Hodge Locus of $(S,\VV)$ is the subset of $S$ given by the collection of preimages of the sub-Mumford-Tate domains along $\Phi$.
\end{itemize}
We will go back to this setting in \Cref{lect2}, but we can already observe that we are very well in the setting described in the previous section and that Zilber-Pink type conjectures can be at least formulated. (See \Cref{ex14} for a link with the AO problem.) Before continuing on this topic, we would like to conclude this historical overview with a couple of applications of this circle of ideas to related areas.
\subsection{Some applications}
\subsubsection{Integral points}
The first theorem is due to Lawrence and Venkatesh \cite{zbMATH07233321} and in fact was part of the motivation of Bakker and Tsimerman to prove their Ax-Schanuel conjecture (a broad generalization of \Cref{althm1} that will be discussed in details in \Cref{lect3}). As in the Preface, let $U_{n,d}$ denote the parameter space of smooth degree $d$ hypersurfaces in the $n+1$-dimensional projective space.
\begin{theor}[{\cite[Thm 10.1, Prop. 10.2]{zbMATH07233321}}]\label{lv}
There exist $n_0 \in \NN_{\geq 3}$ and a function $d_0: \NN \to \NN$ such that,
\begin{equation}\label{conditionLV}
\textnormal{ for every} \; n \geq n_0 \text{   and   } d\geq d_0(n),
\end{equation}
the set $U_{n,d}(\Z[S^{-1}])$ is not Zariski dense in 
$U_{n,d}$, for every finite set of primes $S$.
\end{theor}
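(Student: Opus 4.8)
The plan is to follow the Lawrence--Venkatesh strategy, which is a $p$-adic, non-archimedean variant of the Faltings--Chabauty philosophy combined with the big monodromy of the Hodge-theoretic family. Suppose for contradiction that $U_{n,d}(\ZZ[S^{-1}])$ is Zariski dense in $U_{n,d}$ for suitable $n, d$. The family $f : \mathcal{X} \to U_{n,d}$ of smooth hypersurfaces gives rise to a polarized VHS $\VV = R^n f_* \ZZ_{\prim}$, hence a period map $\Phi : U_{n,d}^{\an} \to \Gamma \backslash D$. The key input is that for $n \geq n_0$ and $d \geq d_0(n)$ the algebraic monodromy group of this VHS is as large as possible (it is $\Sp$ or $\SO$ of the primitive cohomology, after passing to a finite cover), a statement going back to Beauville and made effective in the relevant range; this ``big monodromy'' is what forces the $\ell$-adic (and crystalline) Galois representations attached to the $\ZZ[S^{-1}]$-points to vary a lot.

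The heart of the argument is a comparison of two ``sizes'' at a prime $p$. On one side, by a theorem of Faltings (finiteness of semisimple Galois representations of bounded ramification and Hodge--Tate weights, the same input behind his proof of the Shafarevich conjecture), the set of isomorphism classes of the $p$-adic Galois representations $H^n_{\et}(\mathcal{X}_{s,\overline{\QQ}}, \QQ_p)$ attached to points $s \in U_{n,d}(\ZZ[S^{-1}])$ is finite. Therefore, if there were Zariski-many such points, a Zariski-dense subset would have the \emph{same} associated $p$-adic Galois representation up to semisimplification. On the other side, one studies the $p$-adic period map: using $p$-adic Hodge theory ($p$-adic comparison / crystalline comparison), points with the same Galois representation are constrained to lie in a $p$-adic analytic ``Bloch--Kato Selmer'' subvariety of $U_{n,d}$, whose dimension one bounds above by a cohomological computation involving $H^1_f$ of the adjoint. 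The big monodromy hypothesis guarantees that this Selmer subvariety has dimension strictly smaller than $\dim U_{n,d}$, contradicting Zariski density. Concretely: the dimension of the space of infinitesimal deformations of the Galois representation that remain crystalline is controlled by $\dim H^1_f(G_{\QQ_S}, \ad)$, and the Euler characteristic formula together with the large image forces this to be small relative to the number of moduli $\dim U_{n,d} \sim \binom{n+1+d}{d}$, which grows much faster.

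The main obstacle — and the reason for the conditions \eqref{conditionLV} — is precisely establishing the numerical inequality: one must show that the Hodge numbers of degree-$d$ hypersurfaces of dimension $n$ are spread out enough (in particular that the ``middle'' Hodge piece does not dominate) so that the Selmer-type dimension bound beats $\dim U_{n,d}$. This requires explicit estimates on the $h^{p,q}$ of hypersurfaces (computable via the Jacobian ring / Griffiths residues) and a careful bookkeeping of local terms at primes in $S$ and at $p$; getting an \emph{effective} $d_0(n)$ is the delicate part, and it is here that $n_0 \geq 3$ and a rapidly growing $d_0$ enter. A secondary technical point is ensuring the family of hypersurfaces, after a suitable finite étale base change, actually realizes the maximal monodromy and that the relevant local systems are defined over a number field with controlled ramification, so that Faltings' finiteness applies; this is where one invokes the Ax--Schanuel / functional transcendence circle of ideas only indirectly — Bakker--Tsimerman's motivation — rather than as a literal input to this particular proof.
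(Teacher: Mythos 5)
First, a point of comparison: the paper does not actually prove \Cref{lv} --- it is quoted from Lawrence--Venkatesh, and all the text offers is a remark identifying the mechanism: $p$-adic Hodge theory puts $\Phi_p\bigl(U_{n,d}(\Z[S^{-1}])\bigr)$ inside a proper algebraic subvariety $Y \subset D^{\vee}$, and then a transcendence statement for the $p$-adic period map (an Ax--Schanuel-type input) shows that $\Phi_p^{-1}(Y)$ cannot be Zariski dense. Your sketch contains several correct ingredients of that circle of ideas (Faltings' finiteness for semisimple representations unramified outside a fixed set, the crystalline comparison, big monodromy for the universal family of hypersurfaces, and the fact that the numerical condition \eqref{conditionLV} ultimately comes from estimates on the Hodge numbers of hypersurfaces).

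However, the step you present as the heart of the argument is the heuristic from the Lawrence--Venkatesh introduction, not their proof, and as written it has a genuine gap. You assert that points with a fixed (semisimplified) global representation lie in a ``Bloch--Kato Selmer subvariety'' of $U_{n,d}$ of dimension bounded by $\dim H^1_f(G_{\Q,S},\ad)$. No such subvariety is available: the assignment $s \mapsto \rho_s$ is transcendental in $s$, and nothing in Galois deformation theory bounds the dimension of the locus in moduli where $\rho_s^{\operatorname{ss}}$ is constant --- a priori that locus could contain positive-dimensional algebraic families invisible to $H^1_f$. The actual proof controls this locus through the $p$-adic period map: on a residue disk the crystalline Frobenius is constant by parallel transport, the local representation at $p$ determines the pair (Frobenius, filtration), so constancy of $\rho_s^{\operatorname{ss}}$ forces the Hodge filtration into finitely many orbits of the Frobenius centralizer, i.e.\ into a proper subvariety $Y \subset D^{\vee}$; the condition on $n$ and $d$ enters precisely here, through the dimension of these centralizer orbits versus $\dim D^{\vee}$ (Hodge numbers not too concentrated), not through an Euler-characteristic computation for $H^1_f(\ad)$. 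One then still needs a functional-transcendence/big-monodromy statement to conclude that $\Phi_p^{-1}(Y)$ is not Zariski dense --- exactly the step you explicitly disclaim when you say Ax--Schanuel enters ``only indirectly''. Without it, big monodromy by itself gives no control on the fibers of $s \mapsto \rho_s$, and the contradiction never materializes. A secondary gap: Faltings' theorem only controls semisimplifications, and dealing with possibly non-semisimple $\rho_s$ is a substantial portion of Lawrence--Venkatesh's work, which your sketch passes over.
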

See also \cite{2024arXiv240104981J} for an effective version of \eqref{conditionLV}.
\begin{rmk}
In brief (and ignoring many crucial details), the idea behind the above result is the following (closely related to the Kim-Chabauty method). If we have  a (immersive, say) period map $\Phi : S\to \Gamma \backslash D$, we we want to show that the $S$-integral points $S(\Z[S^{-1}])$ are not Zariski dense in $S$. Working $p$-adically, there's a lift of the period map $\Phi_p : S(\Q_p)\to D(\Q_p)$ and, using some results from $p$-adic Hodge theory, the authors can show that $\Phi_p(S(\Z[S^{-1}]))$ lies in an algebraic subvariety $Y $ of $D^{\vee}$. At this point, a $p$-adic version of Ax-Schanuel implies that $\Phi_p^{-1}(Y)$ can not be Zariski dense in $S$ (and in fact it has to lie in the Hodge locus). In particular $S(\Z[S^{-1}])$ cannot be Zariski dense.
\end{rmk}

Consider $f_{n,d}: X_{n,d} \to U_{n,d}$
the universal family of smooth degree $d$ hypersurfaces in
$\mathbb{P}^{n+1}$. We denote by $\VV$ the polarized $\ZZ$-variation of
Hodge structure $(R^n {f ^\an_{n, d, \CC}}_* \ZZ)_\prim$ on $U_{n, d,
  \CC}$ and by $\Phi: {U_{n,d,}}^\an \to \Gamma \backslash D$ the
associated period map. An irreducible algebraic subvariety $Y \subset
S$ is said to be {\em of positive period dimension} if $\Phi(Y_\CC^\an)$
has positive dimension. 

As the above remark outlined, \Cref{lv} is already employing tools that we will discover in our lecture. In fact, these viewpoints are even closer than one might expect. For example, in  \cite{zbMATH07745044}, using the Zilber-Pink philosophy, the authors prove:

\begin{theor}\label{maincor} 
As long as \eqref{conditionLV} is satisfied, there exists a closed strict subscheme 
$E\subset U_{n,d}$ such that, for all finite set of primes $S$, we have
\begin{displaymath}
\overline{U_{n,d}(\Z[S^{-1}])}_{\pos} \subset E,
\end{displaymath}
where $\overline{U_{n,d}(\Z[S^{-1}])}_{\pos}$ denotes the union of the irreducible
components of the Zariski closure of $U_{n,d}(\Z[S^{-1}])$ in
$U_{n,d}$ of positive period dimension.
That is: the Zariski closure of $U_{n,d}(\Z[S^{-1}]) - E(\Z[S^{-1}])$ has period dimension zero.
\end{theor}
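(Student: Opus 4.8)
The plan is to combine the $p$-adic period-map argument underlying \Cref{lv} with the algebraicity of the positive-period-dimension part of the \emph{atypical} Hodge locus. Fix a finite set of primes $S$. Following Lawrence--Venkatesh, choose an auxiliary prime $p\notin S$ of good reduction; there is then a $p$-adic period map $\Phi_p\colon U_{n,d}(\Q_p)\to D^\vee(\Q_p)$, and $p$-adic Hodge theory (crystalline comparison, together with the Faltings-type finiteness of semistable $\Q_p$-representations with bounded Hodge--Tate weights and fixed determinant) shows that $\Phi_p\big(U_{n,d}(\Z[S^{-1}])\big)$ lands in a $\Q_p$-algebraic subvariety $Y\subset D^\vee$. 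The quantitative heart of \Cref{lv} is that, \emph{assuming} \eqref{conditionLV}, this $Y$ is small: $\dim Y\le \dim D^\vee - c(n,d)$ with $c(n,d)$ large, the largeness coming from the bigness of the geometric monodromy of the hypersurface VHS, the non-degeneracy (infinitesimal Torelli / Kodaira--Spencer) of the period map, and the resulting size of the generic Mumford--Tate group.

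\textbf{Step 1: from integral points to the atypical Hodge locus.} Let $Z$ be an irreducible component of the Zariski closure of $U_{n,d}(\Z[S^{-1}])$ having positive period dimension, so that $Z_{\Q_p}\subset \Phi_p^{-1}(Y)$. Applying the $p$-adic Ax--Schanuel theorem for period maps (the $p$-adic incarnation of the functional transcendence results of \Cref{lect3}) to the pair $(Z,Y)$, one obtains that $\Phi(Z^\an)$ is contained in a proper sub-Mumford--Tate domain; moreover the numerical form of Ax--Schanuel, fed the bound $\dim Y\le \dim D^\vee-c(n,d)$, forces the associated intersection to violate \eqref{firstatyp}, i.e.\ to be \emph{atypical}. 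Hence $Z\subset \HL(U_{n,d},\VV)_{\atyp}$, and since $\dim\Phi(Z^\an)>0$, the component $Z$ lies in the positive-period-dimension part of $\HL(U_{n,d},\VV)_{\atyp}$.

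\textbf{Step 2: algebraicity and conclusion.} Invoke the main theorem on the distribution of the Hodge locus (to be discussed in \Cref{lect2}): the union of the positive-period-dimension irreducible components of $\HL(U_{n,d},\VV)_{\atyp}$ is a \emph{finite} union of special subvarieties of $U_{n,d}$, in particular a closed strict subscheme $E\subset U_{n,d}$. Crucially, $E$ is intrinsic to the pair $(U_{n,d},\VV)$ and involves no choice of $S$; that $E$ is a \emph{strict} subscheme is essentially \Cref{lv} applied with $S=\varnothing$. Combining this with Step 1 gives $\overline{U_{n,d}(\Z[S^{-1}])}_{\pos}\subset E$ for every finite set of primes $S$, which is the assertion.

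\textbf{Expected main obstacle.} Two inputs carry the weight. The first is making the dimension bound on $Y$ effective enough to cross the atypicality threshold \eqref{firstatyp}: this forces one through the full Lawrence--Venkatesh analysis (large monodromy, non-degeneracy of the hypersurface period map, and the $p$-adic Hodge-theoretic bound on the number of relevant lattices), and is precisely why the hypotheses \eqref{conditionLV} on $n$ and $d$ are needed. The second — and genuinely deeper — input is the algebraicity of the positive-dimensional atypical Hodge locus, which itself rests on the archimedean Ax--Schanuel theorem for period maps together with a Noetherian stratification of the base by the Hodge-generic loci of its sub-VHS; one must take care that its statement concerns components of positive \emph{period} dimension, matching the definition of $\overline{U_{n,d}(\Z[S^{-1}])}_{\pos}$ exactly. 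A more routine but necessary check is the compatibility of the $p$-adic and archimedean notions of sub-Mumford--Tate domain and of (weakly) special subvariety, so that atypicality detected $p$-adically in Step 1 feeds correctly into the archimedean algebraicity of Step 2.
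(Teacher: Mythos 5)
First, note that the paper does not prove \Cref{maincor}: it is quoted from \cite{zbMATH07745044}, and the text only indicates the intended route (``the improvement comes from understanding whether the Hodge locus of positive period dimension is Zariski closed''). Your two-step skeleton --- Lawrence--Venkatesh's $p$-adic period argument to push the positive-period-dimension components of $\overline{U_{n,d}(\Z[S^{-1}])}$ into a Hodge-theoretically defined locus, plus Zilber--Pink-type algebraicity of that locus to get an $E$ independent of $S$ --- is indeed the intended strategy. But two of your key deductions are not justified as stated. In Step 1, what the ($p$-adic) Ax--Schanuel theorem gives, in the form of \Cref{specialAS}, once $\codim Y\geq \dim U_{n,d}$, is that each positive-period-dimension component $Z$ of the Zariski closure lies in a strict \emph{weakly special} subvariety, i.e.\ a monodromy statement. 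It does not give $Z\subset \HL(U_{n,d},\VV^\otimes)_{\atyp}$: weakly special subvarieties need not be special (the Mumford--Tate group can remain generic, e.g.\ for fibers of a factor of the period map), and special subvarieties need not be atypical. The inequality \eqref{equation atypical} compares $\Phi(Z^{\an})$, $\Phi(S^{\an})$ and the sub-Mumford--Tate domain of $Z$ inside $\Gamma\backslash D$; no ``numerical form'' of Ax--Schanuel fed with $\dim Y$ converts the $p$-adic containment $\Phi_p(Z)\subset Y$ into that archimedean inequality. In the actual argument the atypicality comes from Hodge theory, not from Lawrence--Venkatesh: under \eqref{conditionLV} the hypersurface VHS has level $\geq 3$, so by \Cref{level criterion} the typical Hodge locus is empty and every strict special subvariety is automatically atypical; and the passage from ``$Z$ lies in a strict weakly special subvariety'' to ``$Z$ lies in the Hodge locus'' uses that the algebraic monodromy group is the full (simple) group $\G^{\der}$ for this family, so a Hodge-generic subvariety with strictly smaller monodromy has constant period map and hence zero period dimension.

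In Step 2 you misquote the input: \Cref{geometricZP} does \emph{not} say that $\HL(S,\VV^\otimes)_{\atyp,\pos}$ is a finite union of special subvarieties. It says that each component of its Zariski closure is either a maximal atypical special subvariety or falls into the product/fibration alternative (b), and by itself it does not exclude that this closure is all of $U_{n,d}$. To obtain a \emph{strict closed} $E$ one must rule out case (b) and rule out density, which again uses level $\geq 3$ (emptiness of the typical locus) together with the simplicity of the generic (adjoint) Mumford--Tate group coming from big monodromy for hypersurfaces. Finally, your remark that strictness of $E$ is ``essentially \Cref{lv} with $S=\varnothing$'' is off: $E$ is defined purely Hodge-theoretically, and its strictness is exactly the Hodge-theoretic finiteness just described; \Cref{lv} with $S=\varnothing$ only says that $\Z$-points are not Zariski dense, which gives no information about $E$. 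With these two repairs --- atypicality/containment in the Hodge locus supplied by \Cref{level criterion} and simplicity of the monodromy, and algebraicity plus strictness of $\HL(U_{n,d},\VV^\otimes)_{\pos}$ supplied by \Cref{geometricZP} together with those same inputs --- your outline becomes the argument of \cite{zbMATH07745044}.
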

In a nutshell, the improvement comes from understanding whether the Hodge locus (of positive period dimension) is Zariski closed or not, which brings us back to \Cref{mainquestion}.
\subsubsection{Jacobians with prescribed properties}

We recall the following folklore question.
\begin{question}
Let $g\geq 4$, and let $ A$ be a generic abelian variety over an algebraically closed field $k$. What is the lowest integer $g'$ such that there exists a Jacobian $J$ of dimension $g'$ and a surjection $J\to A$?
\end{question}

In characteristic zero, the first results were obtained by Chai-Oort, Tsimerman, and after in a more general setting by Masser-Zannier \cite{zbMATH07168647}. For simplicity we just cite the most recent and general one, due to Tsimerman \cite{2023arXiv230205860T} (see also references therein):
\begin{theor}
For any two integers $g\geq 4$ and $g' \leq 2g -1$, there exist $g$-dimensional abelian varieties over $\overline{\Q}$ which are not quotients of a Jacobian of dimension $g'$.
\end{theor}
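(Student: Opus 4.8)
The plan is to recast the statement as a non-density assertion for an atypical Hodge locus inside $\mathcal{A}_{g'}$, and then to run the Zilber--Pink strategy of these notes. Put $h=g'-g$; we may assume $0\le h\le g-1$. By Poincar\'e reducibility, a $g$-dimensional abelian variety $A$ is a quotient of a $g'$-dimensional Jacobian $J$ if and only if $A$ is an isogeny factor of $J$, i.e. $J$ is isogenous to $A\times B$ with $\dim B=h$. In $\mathcal{A}_{g'}$ this means that $[J]$ lies in the Torelli locus $T_{g'}$ (of dimension $3g'-3$) and in the split locus $\mathcal{H}$, the Hecke orbit of the image of $\mathcal{A}_g\times\mathcal{A}_h\to\mathcal{A}_{g'}$, a countable union of special subvarieties of dimension $\binom{g+1}{2}+\binom{h+1}{2}$. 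Hence the $g$-dimensional abelian varieties that occur form a subset $\mathcal{Z}_{g,g'}\subseteq\mathcal{A}_g$, namely the image of the correspondence $W\subseteq\mathcal{A}_g\times(T_{g'}\cap\mathcal{H})$ recording the $g$-dimensional factor; this correspondence is generically finite onto $T_{g'}\cap\mathcal{H}$, so $\dim\mathcal{Z}_{g,g'}\le\dim(T_{g'}\cap\mathcal{H})$. The theorem is thus equivalent to: \emph{$\mathcal{Z}_{g,g'}$ is not Zariski dense in $\mathcal{A}_g$}.

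\emph{Step 1 (dimension count).} The expected dimension of $T_{g'}\cap\mathcal{H}$ inside $\mathcal{A}_{g'}$ is
\[
(3g'-3)+\binom{g+1}{2}+\binom{h+1}{2}-\binom{g'+1}{2}=3g+3h-3-gh=\binom{g+1}{2}-\tfrac{1}{2}(g-3)(g-2+2h),
\]
and for $g\ge4$ (so $g-3\ge1$ and $g-2+2h\ge2$) this is strictly less than $\binom{g+1}{2}=\dim\mathcal{A}_g$. Thus every \emph{typical} component of $T_{g'}\cap\mathcal{H}$ maps to a proper subvariety of $\mathcal{A}_g$; what remains is to control (i) the atypical components and (ii) the possible accumulation of the countably many components. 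This dimension count alone is genuinely insufficient, and does not see the bound $g'\le2g-1$: for $g'$ large the statement is false, since every $g$-dimensional $A$ over $\overline{\Q}$ is a quotient of a Jacobian of genus uniformly bounded in terms of $g$.

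\emph{Step 2 (controlling the atypical Hodge locus --- the hard step).} Here functional transcendence is indispensable. One applies the Ax--Schanuel theorem to the period map of the universal family of curves over $\mathcal{M}_{g'}$ (equivalently, to $T_{g'}\subset\mathcal{A}_{g'}$): every positive-dimensional atypical component of an intersection of $T_{g'}$ with a weakly special subvariety lies in a proper weakly special subvariety of $T_{g'}$ along which the intersection is still atypical. Iterating this, each weakly special subvariety being again the base of a polarized variation of Hodge structure to which the count of Step~1 reapplies, one reduces to finitely many maximal atypical loci; it is precisely the hypothesis $g'\le2g-1$, i.e. that the complementary factor is strictly smaller than $A$, that keeps this descent in the atypical regime. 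The zero-dimensional strata --- and the countability issue in (ii) --- are then handled by the Pila--Zannier scheme: the definability of the period map (the subject of the first lecture series) and the Pila--Wilkie counting theorem, together with Masser--W\"ustholz-style isogeny bounds ensuring that for $A$ defined over $\overline{\Q}$ only finitely many Hecke translates of $\mathcal{A}_g\times\mathcal{A}_h$ can contribute. At a higher level this is exactly the content of the recent structure theorems for the atypical part of the Hodge locus. The outcome is a proper closed subvariety $E\subsetneq\mathcal{A}_g$, defined over $\overline{\Q}$ (since $T_{g'}$, $\mathcal{H}$ and the Hecke data are), with $\mathcal{Z}_{g,g'}\subseteq E$.

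\emph{Step 3 (descent to $\overline{\Q}$).} Since $E\subsetneq\mathcal{A}_g$ is a proper closed subvariety defined over $\overline{\Q}$, the complement $\mathcal{A}_g\setminus E$ is a nonempty Zariski-open subset defined over $\overline{\Q}$, hence has a $\overline{\Q}$-point $[A]$. Such an $A$ is a $g$-dimensional abelian variety over $\overline{\Q}$ that is not a quotient of any $g'$-dimensional Jacobian over $\C$, in particular not over $\overline{\Q}$; and since for a fixed $g$ there are only finitely many values $g\le g'\le2g-1$, one can take a single $E$ by union. The main obstacle is Step~2: the dimension count produces only the \emph{very general} abelian variety, and promoting this to a Zariski-open locus --- i.e. proving that the atypical Hodge locus here is not Zariski dense --- is exactly the kind of Zilber--Pink statement whose proof rests on Ax--Schanuel and, for the arithmetic of the zero-dimensional strata, on the o-minimal counting of Pila--Zannier.
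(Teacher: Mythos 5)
There is a genuine gap, and it is located exactly where you place the main weight. The notes do not prove this theorem (it is quoted from Tsimerman, with the remark that all known proofs follow the Pila--Zannier strategy), so your argument must stand on its own, and its opening reduction --- ``the theorem is thus equivalent to: $\mathcal{Z}_{g,g'}$ is not Zariski dense in $\mathcal{A}_g$'' --- is false, which makes Steps 2--3 collapse. The locus $\mathcal{Z}_{g,g'}$ of abelian varieties receiving a surjection from a $g'$-dimensional Jacobian is stable under isogeny (compose $J\twoheadrightarrow A$ with an isogeny $A\to A'$), hence is a union of isogeny classes; since the isogeny class of any point of $\mathcal{A}_g$ is dense for the analytic topology (real approximation for $\mathbf{GSp}_{2g}$), $\mathcal{Z}_{g,g'}$ is analytically, hence Zariski, dense whenever it is nonempty --- and it is nonempty precisely in the cases where the theorem has content (for $g'=g$ it contains all Jacobians; for $g'=2g-1$ it contains every $JC$ with $C$ of genus $g$, via the norm map of an unramified double cover; were it empty the theorem would be vacuous). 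So no proper closed $E\subsetneq\mathcal{A}_g$ containing $\mathcal{Z}_{g,g'}$ can exist, and no Zilber--Pink or Ax--Schanuel statement can produce one: in your own framework the intersections of $T_{g'}$ with the countably many Hecke translates of $\mathcal{A}_g\times\mathcal{A}_h$ are \emph{typical} for much of the range (your Step 1 number $3g+3h-3-gh$ is $\geq 0$, e.g. always when $h=0$), and the density results for the typical Hodge locus recalled in Lecture 2 say such loci are dense; Zilber--Pink only bounds the atypical part and says nothing against a countable dense union of typical components, each of which individually maps into a proper subvariety of $\mathcal{A}_g$.

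The actual difficulty, which the proposal never confronts, is to exhibit a $\overline{\Q}$-point avoiding a countable, Zariski-dense union of proper subvarieties: ``very general over $\C$'' (your Step 1) gives nothing over $\overline{\Q}$, and there is no nonempty open complement to take a point in. This is where the Pila--Zannier mechanism does the real work in the known proofs (Chai--Oort, Tsimerman, Masser--Zannier): one fixes a concrete arithmetic family of candidate points (e.g. CM points, or points with provably large Galois orbits), compares a lower bound for the Galois orbit with a Pila--Wilkie/isogeny-estimate upper bound for the number of points of bounded height or complexity on the definable sets uniformizing the relevant Hecke translates, and uses Ax--Schanuel/weakly special analysis only to control positive-dimensional intersections arising in the counting. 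In your write-up these ingredients appear only as a patch ``for the zero-dimensional strata'' inside a scheme whose conclusion is impossible; moreover the hypothesis $g'\leq 2g-1$ is never actually used --- it is asserted in Step 2 to ``keep the descent in the atypical regime'' without any inequality in which $h\leq g-1$ intervenes, whereas in a correct proof it must enter quantitatively in the counting/dimension estimates.
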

All known proofs use o-minimality and are inspired once more by the Pila-Zannier strategy (that we discussed in the exercises).

 On the other hand, Mumford \cite{zbMATH03271259} shows that there exist
principally polarized abelian 4folds $A$ with trivial endomorphism ring, that are not Hodge
generic in $\cA_4$ (they have an exceptional Hodge class in
$H^4(A^2, \ZZ)$). A question often attributed to Serre is to describe ``as
explicitly as possible'' such abelian varieties \emph{of Mumford's
  type}. The most satisfying way would be to show the existence of a smooth projective curve over $\overline{\QQ}$ of genus $4$,
whose Jacobian is of Mumford's type.

The Zilber-Pink philosophy is the crucial ingredient behind the following \cite[Thm. 3.17]{2021arXiv210708838B}, as we will explore in \Cref{ex399}:
\begin{theor}\label{serrequestion}
 There exists a smooth projective curve $C/\Qbar$ of genus $4$ whose
 Jacobian is of Mumford's type, i.e. it has Mumford-Tate group isogenous to a $\Q$-form of the complex group $\Gm \times \Sl_2\times \Sl_2 \times \Sl_2$. In fact such curves are dense in the moduli space $\mathcal{M}_4$.
\end{theor}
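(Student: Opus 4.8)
The plan is to realize Mumford's abelian fourfolds as a \emph{typical} intersection of the Torelli locus with the Hecke orbit of a single Shimura curve inside $\cA_4$, and then to invoke the density theorem for the typical Hodge locus. Let $f\colon\mathcal{C}\to\mathcal{M}_4$ be the universal curve (rigidified by a level structure, or read stackily; equivalently restrict to the dense open non-hyperelliptic locus, on which the period map is an immersion by infinitesimal Torelli), and let $\mathbb{V}=(R^1f_*\ZZ,\mathrm{pol})$ be the associated weight-one polarized $\ZZ$-VHS, with period map $\Phi\colon\mathcal{M}_4^\an\to\Gamma\backslash\mathbb{H}_4$, $\mathbb{H}_4\subset D^\vee$ the Siegel space; via Torelli this is the restriction of the tautological VHS on $\cA_4$ to the Torelli locus $\mathcal{T}_4$. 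Since the symplectic representation $\pi_1(\mathcal{M}_4)\to\operatorname{Sp}_8(\ZZ)$ is surjective (hence Zariski dense), the generic Mumford--Tate group is $\operatorname{GSp}_8$ and $\mathcal{T}_4$ lies in no proper special subvariety of $\cA_4$, while $\dim\Phi(\mathcal{M}_4^\an)=\dim\mathcal{M}_4=9=\dim\cA_4-1$. On the other side, Mumford's construction attaches to a totally real cubic field $F$ and a quaternion $F$-algebra ramified at exactly two of the three real places a connected Shimura subdatum $(\G,\mathbb{H})\hookrightarrow(\operatorname{GSp}_8,\mathbb{H}_4)$, with $\G^\der$ a $\QQ$-form of $\Sl_2\times\Sl_2\times\Sl_2$ and $\G/\G^\der\cong\Gm$, acting on an irreducible $8$-dimensional weight-one $\QQ$-Hodge structure; the associated special subvariety $M\subset\cA_4$ is a \emph{complete} Shimura curve (the quaternion algebra being division), $\dim M=1$, $\codim_{\cA_4}M=9$, and $M$ together with all its Hecke translates $M_n$ is defined over $\Qbar$. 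Hence a component of $\mathcal{T}_4\cap M_n$ has expected dimension $9+1-10=0$: we are exactly in the typical regime of \eqref{firstatyp}, at the boundary equality case $\dim\mathcal{M}_4-\codim_{\cA_4}(M)=0\geq0$ of the numerical criterion (iii) above.

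The next step is to check that the typical Hodge locus carved out by the Mumford data is non-empty, which in the borderline equality case does \emph{not} follow from the dimension count alone. Existence of \emph{some} intersection is cheap: each $M_n$ is a complete curve inside $\cA_4\subset\cA_4^*$, while the closure $\overline{\mathcal{T}_4}$ in $\cA_4$ is the Schottky hypersurface, cut out on $\cA_4^*$ by a section of a power of the ample Hodge line bundle, so $M_n\cap\overline{\mathcal{T}_4}\neq\emptyset$. The delicate point is that this intersection should meet $\mathcal{T}_4$ itself (and not only the decomposable/degenerate part of $\overline{\mathcal{T}_4}$, of dimension $\leq 7$) and should be the \emph{typical} one in the Hodge sense, namely $0$-dimensional and Hodge-generic in $M_n$. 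Irreducibility of the $\G$-representation already forces the generic point of each $M_n$ to correspond to a simple abelian variety, hence non-decomposable, and $M_n\not\subseteq\overline{\mathcal{T}_4}$ keeps the intersection $0$-dimensional; the remaining positivity/general-position input — that $\mathcal{T}_4$ is not positioned so as to make \emph{every} such intersection atypical — is supplied by the non-emptiness criterion of \cite{2021arXiv210708838B}, using that the sub-Mumford--Tate domain $D_\G$ is horizontal and that $\Phi$ has maximal algebraic monodromy.

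Granting a non-empty typical Hodge locus, the density theorem of \cite{2021arXiv210708838B} — whose proof runs through the Ax--Schanuel theorem for period maps (\Cref{lect3}) together with the o-minimal structure on $\Gamma\backslash D$ — yields that the full typical Hodge locus is analytically dense, hence Zariski dense, in $\mathcal{M}_4$. In our situation this typical locus is precisely the set of $[C]$ with $\operatorname{MT}(\operatorname{Jac}(C))$ conjugate to a Mumford group $\G$ — the CM points of the $M_n$ being atypical (their special closure is $0$-dimensional, so the ZP inequality is strict), hence automatically excluded — so the Mumford--Tate group of such a Jacobian is exactly a $\QQ$-form of $\Gm\times\Sl_2\times\Sl_2\times\Sl_2$. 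Finally $\mathcal{M}_4$ is defined over $\QQ$ and the $M_n$ over $\Qbar$ (canonical models of Shimura subvarieties), so each point of this dense locus is $\Qbar$-rational and the corresponding genus-$4$ curve descends to $\Qbar$; this gives the density statement and in particular the existence.

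The main obstacle is the non-emptiness in the second step: because the numerical inequality is an equality, general position is not for free, and one must genuinely exhibit a \emph{typical} intersection of the Torelli hypersurface with the Mumford Hecke orbit — equivalently, rule out the pathology that $\mathcal{T}_4$ meets the curves $M_n$ only along their atypical (decomposable or CM) loci, and produce at least one smooth genus-$4$ Jacobian, or one first-order deformation thereof, degenerating transversally to Mumford's Hodge structure. This is where the fine geometry of $\mathcal{M}_4\hookrightarrow\cA_4$ — the Higgs field of $\mathbb{V}$ and the explicit geometry of canonical genus-$4$ curves — is indispensable; once this is in hand, the density theorem does the rest essentially formally.
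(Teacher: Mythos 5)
Your setup (Torelli locus of dimension $9$ in $\cA_4$, Mumford Shimura curves of codimension $9$, hence borderline-typical intersections) matches the paper's, but there is a genuine gap exactly where you flag it: the existence of a single \emph{typical} intersection point is never established. The ampleness argument only shows that each complete Mumford curve $M_n$ meets the \emph{closed} Schottky/Torelli hypersurface, which could a priori happen only along decomposable Jacobians or CM points; and the density theorem you then invoke cannot repair this, since \Cref{typicallocus} as stated concerns the typical Hodge locus \emph{of positive period dimension}, which is irrelevant here because the typical special subvarieties in question are isolated points. Your closing claim that one must bring in the ``fine geometry'' of $\mathcal{M}_4\hookrightarrow\cA_4$ (Higgs fields, canonical curves) is a misdiagnosis: no such input is needed, and none is supplied, so the proof as written does not close.

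The paper's route avoids this entirely: it applies \Cref{dense} (Eterovic--Scanlon, Khelifa--Urbanik), proved via the Ax--Schanuel theorem as in \Cref{ex37}, to the Mumford sub-datum, which is $\VV$-admissible precisely because $9+1\geq 10$. Ax--Schanuel at a Hodge generic lift $\tilde s$ forces the intersection $\tilde S\cap g\cdot D_H$ to have a component of the expected dimension, and density of $\G(\Q)$ in $\G(\R)$ then gives analytic density of $\{s:\MT(\VV_s)\subset g\mathbf{H}g^{-1}\}$ --- non-emptiness and density in one stroke, using only Hodge genericity of the Torelli locus (maximal monodromy) and the dimension count. One then still has to rule out that this dense set consists mostly of points with strictly smaller Mumford--Tate group, i.e.\ CM points; the paper does this with the Andr\'e--Oort theorem for $\cA_4$ \cite{MR3744855}, since the Zariski closure of the CM points is a finite union of special subvarieties, each proper in the Hodge generic Torelli locus. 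Your alternative exclusion of CM points (they are atypical, hence not in the typical locus) would be fine had you obtained density of typical Mumford points, but since your density input does not apply, this step also remains open; note too that the typical Hodge locus is not ``precisely'' the Mumford locus --- any special point whose Mumford--Tate domain has positive dimension is typical here --- so one must argue for the Mumford datum specifically, as \Cref{dense} does. Your $\Qbar$-rationality argument is fine and agrees with the intended one.
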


For more about the Hodge locus of $\mathcal{M}_g$ and the Zilber-Pink conjecture in this special case, we refer to the forthcoming survey \cite{specialcurves}.

\newpage
\section{Lecture 2: Distribution of the Hodge locus}\label{lect2}
After the historical overview and the motivations discussed from number theoretic problems, we now focus solely on the complex analytic side.

We begin with some general notation and conventions. Here an algebraic variety $S$ is a reduced scheme of finite type over the field of complex numbers, but may be reducible. If $S$ is an algebraic (resp.\ analytic) variety, by a subvariety $Y \subset S$ we always mean a \emph{closed} algebraic (resp.\ analytic) subvariety. The smooth locus of $Y$ is denoted by $Y^{\operatorname{sm}}$.

A $\Q$-Hodge structure of weight $n$ on a finite dimensional $\Q$-vector space $V_\Q$ is a decreasing filtration $F^\bullet$ on the complexification $V_\C$ such that
\[
V_\C= \bigoplus_{p\in \Z} F^{p} \oplus \overline{F^{n-p}}.
\]
The category of pure $\mathbb{Q}$-Hodge-structures is Tannakian (and semisimple if we consider polarizable ones, as we will usually do). The Mumford--Tate group $\MT(V) \subset \GL(V)$ of a $\mathbb{Q}$-Hodge structure $V$ is the Tannakian group of the Tannakian subcategory $\langle V\rangle ^\otimes$ of $\mathbb{Q}$-Hodge structures generated by $V$. Equivalently, $\MT(V)$ is the smallest $\mathbb{Q}$-algebraic subgroup of $\GL(V)$ whose base-change to $\mathbb{R}$ contains the image of $h: \mathbb{S} \to \GL(V_{\mathbb{R}})$. It is also the stabilizer in $\GL(V)$ of the Hodge tensors for $V$. As $V$ is polarised, this is a reductive algebraic group. Cf. \Cref{ex21}.
 
\subsection{Typical and atypical intersections}
%%%%%%%%%%%%%%%%%%%%

Recall that, from \Cref{periodmap} and \cite{ben}, to understand a VHS $(S,\VV )$, we consider the associated holomorphic period map 
\begin{equation} \label{period0}
 \Phi: S^{\an} \longrightarrow \Gamma \backslash D, \ \ s \mapsto [\VV_s]
\end{equation}
which completely describes $\VV$.
Here $(\G, D)$ denotes the generic Hodge datum of $\VV$ and $\Gamma \backslash D$ is the associated Hodge variety. The
Mumford--Tate domain\footnote{It is worth elucidating an important and perhaps confusing difference in the literature between \emph{period domains} and Mumford-Tate domains. In \cite{ben}, the period domains are defined as orbits under the group of automorphism of a polarized Hodge structure $(H_\Q,q_\Q)$, where $q_\Q$ is a $(-1)^{\operatorname{weight}(H)}$ symmetric bilinear form. However, a VHS can actually have extra fixed Hodge tensors. The simplest example is a family of squares of elliptic curves. The term MT domain, refers to the fact that we always normalise this issue by taking as ambient group $\mathbf{G}$ the generic Mumford-Tate group of the VHS (which is nothing but the MT of $\VV_s$ for a very general point $s\in S$). In particular $\mathbf{G}(\R)$ can be more general than an orthogonal or a symplectic group. This point will implicitly come back when dealing with \emph{weakly special subvarieties}, cf. \Cref{lect3}} $D$ decomposes as a product $D_1 \times \cdots
\times D_k$, according to the decomposition of the adjoint group $\G^\ad$ into a product $\G_1
\times \cdots \times \G_k$ of simple factors (notice that some factors
$\G_i$ may be $\RR$-anisotropic). Replacing $S$ by a finite \'etale
cover and reordering the factors if necessary, the lattice $\Gamma \subset
\G^\ad(\R)^+$ decomposes as a direct product $\Gamma
\cong \Gamma_1 \times \cdots \times \Gamma_r$, $ r \leq k$, where $\Gamma_i\subset
\G_i(\R)^+$ is an arithmetic lattice for each $i$, and thus is Zariski-dense in
$\G_i$. Writing $D^\prime = D_{r+1} \times \cdots \times D_k$ for the
product of factors where the monodromy is trivial (in particular, $D^\prime$ contains in
particular all the factors $D_i$ for which $\G_i$ is
$\RR$-anisotropic), the period map is written
\begin{equation} \label{period}
 \Phi: S^{\an} \longrightarrow \Gamma \backslash D \cong \Gamma_1 \backslash D_1
 \times \cdots \times \Gamma_r
 \backslash D_r \times D'\;\;,
\end{equation}
and the projection of $\Phi(S^\an)$ on $D^\prime$ has image a point. See also the structure theorem for VHS from \cite[(III.A.2)]{MR2918237}.

We distinguish between special subvarieties $Z$
of \emph{zero period dimension} (i.e., $\Phi(Z^\an)$ is a point of $\Gamma
\backslash D$), which are geometrically elusive; and those of \emph{positive
period dimension} (i.e., $\dim_\CC \Phi(Z^\an) >0$).

%%%%%%%%%%%%%%%%%%%%
\begin{defi} \label{positive period dimension} \label{fpositive} \hfill
 \begin{enumerate}
  \item
 A subvariety $Z$ of $S$ is said to be of {\em positive period dimension for $\VV$} if $\Phi(Z^{\an})$ has
 positive dimension.
 
 \item
 The {\em Hodge locus of positive period dimension} 
 $\HL(S, \VV^\otimes)_{\pos}$ is the
 union of the special subvarieties of $S$
 for which $\VV$ has positive period dimension.
\end{enumerate}
\end{defi}

The Cattani-Deligne-Kaplan theorem \cite{CDK} recalled in the introduction and discussed in \cite{ben} equips the Hodge locus with a structure of a countable (possibly finite) union of subvarieties of $S$.
%%%%%%%%%%%%%%%%%%%%

Using period maps, special subvarieties 
can also be defined as {\em intersection loci}. Indeed, a closed irreducible subvariety $Z \subset S$ is special
for $\VV$ (we will equivalently say that it is \emph{special} for $\Phi$)
precisely when $Z^{\an}$ coincides with an analytic irreducible 
component $\Phi^{-1}(\Gamma' \backslash D')^0$ of $\Phi^{-1}(\Gamma' \backslash D')$, for $(\G', D') \subset
(\G, D)$ the generic Hodge sub-datum of $Z$ and $\Gamma' \backslash D'
\subset \Gamma \backslash D$ the associated Mumford-Tate subdomain.

%%%%%%%%%%%%%%%%%%%%
\begin{defi}\label{atypical}
 Let $Z = \Phi^{-1}(\Gamma' \backslash D')^0\subset S$ be a special
 subvariety for $\VV$ with generic Hodge datum $(\G', D')$. Then $Z$ is
 said to be \emph{atypical} if $\Phi(S^{\an})$ and $\Gamma'
 \backslash D'$ do not intersect generically along $\Phi(Z)$. That is, $Z$ is atypical when
 \begin{equation} \label{equation atypical}
  \codim_{\Gamma\backslash D} \Phi(Z^{\an}) < \codim_{\Gamma\backslash
  D} \Phi(S^{\an}) + \codim_{\Gamma\backslash D} \Gamma'\backslash
 D'\;\;.
 \end{equation}
Otherwise $Z$ is said to be \emph{typical}. 
\end{defi}
%%%%%%%%%%%%%%%%%%%%
The \emph{atypical Hodge locus} $\HL(S,\VV^\otimes)_{\atyp} \subset \HL(S, \VV^\otimes)$
 (resp.\ the \emph{typical Hodge locus} $\HL(S,\VV^\otimes)_{\typ} \subset \HL(S, \VV^\otimes)$) is
 the union of the atypical (resp.\ strict typical) special subvarieties of $S$ for $\VV$. The Hodge locus is the recovered by the union of its typical and atypical parts.

\subsection{Conjectures}
Let $\VV$ be a polarizable $\ZZ$VHS on an irreducible smooth quasi-projective variety $S$, from \cite{2021arXiv210708838B} (which refines \cite{klin}, as well as the discussion from \Cref{lect1}) we expect:

%%%%%%%%%%%%%%%%%%%%
\begin{conj}[Zilber--Pink conjecture for the atypical Hodge locus, strong version] \label{main conj}
 The atypical Hodge locus
 $\HL(S,\VV^\otimes)_{\atyp}$ is a finite union of atypical special
subvarieties of $S$ for $\VV$.
\end{conj}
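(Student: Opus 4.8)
The plan is to reduce the statement to two ingredients: a functional transcendence input (the Ax–Schanuel theorem for period maps, to be stated in \Cref{lect3}) and a geometric ``structure'' statement that organizes the atypical Hodge locus into countably many \emph{positive-dimensional families} of intersections, each of which either degenerates into the atypical locus itself or contributes only typical intersections. One works on the smooth quasi-projective base $S$ with its period map $\Phi\colon S^{\an}\to \Gamma\backslash D$ as in \eqref{period}, and throughout one uses the Cattani–Deligne–Kaplan theorem \cite{CDK} to know \emph{a priori} that $\HL(S,\VV^\otimes)$ is a countable union of algebraic subvarieties; the content is to show that the atypical part of this countable union is in fact \emph{finite} and consists of atypical special subvarieties.

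First I would set up the formalism of intersections inside a suitable ambient algebraic variety: using the definability results of \cite{ben} and the fact that $D$ is a euclidean-open subset of the algebraic variety $D^\vee$, one realizes the image $\Phi(S^{\an})$ (after passing to the relevant quotient and using the definable structure) so that the ``special subvarieties cut out by sub-Mumford–Tate domains'' become, after lifting to $D$, intersections of $\Phi$ with algebraic subvarieties of $D^\vee$ of the appropriate type. The key step is then an \emph{Ax–Schanuel} dichotomy: for an irreducible component $Z$ of $\Phi^{-1}(\Gamma'\backslash D')$, either $Z$ is ``weakly atypical'' in a strong sense (its projection to the relevant product of factors $\Gamma_i\backslash D_i$ is itself atypical), or the monodromy of $\VV|_Z$ is ``large'' and $Z$ is contained in a proper weakly special subvariety. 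This is exactly where the recent functional transcendence advances \cite{2021arXiv210203384B, 2022arXiv220805182B} enter: Ax–Schanuel for $\Phi$ controls the intersections $\Phi(S^{\an})\cap (\Gamma'\backslash D')$ that fail to be typical, forcing them into lower-dimensional loci on which one can induct.

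The heart of the argument, and the main obstacle, is the finiteness of the \emph{maximal} atypical special subvarieties — not merely that atypicality is a closed condition. The strategy is the standard Zilber–Pink ``geometric part + height bound'' split adapted to the VHS setting: (i) show, via Ax–Schanuel and the decomposition \eqref{period} into simple factors, that any atypical $Z$ of positive period dimension is contained in a \emph{fixed} proper algebraic subvariety of $S$ (``the geometric ZP'', the locus where $\Phi(S^{\an})$ meets \emph{some} sub-MT domain non-generically), which by Noetherian induction handles the positive-dimensional atypical components up to a lower-dimensional residual; (ii) for the \emph{atypical special points} (period dimension zero, the genuinely Diophantine case), invoke the Pila–Zannier mechanism — o-minimal definability of $\Phi$ plus the Pila–Wilkie counting theorem, played off against a lower bound for the size of Galois/Hecke orbits of the corresponding Hodge-theoretic data — to bound these points. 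I expect step (ii), and specifically the required Galois-orbit lower bounds for Mumford–Tate data in an arbitrary VHS, to be the genuinely hard and still-open part: it is the analogue of the hardest inputs in the André–Oort proof and is not available in full generality, which is why \Cref{main conj} remains a conjecture rather than a theorem. A complete proof would therefore, at minimum, have to either supply such bounds or circumvent them; the realistic deliverable here is the \emph{conditional} reduction to these two statements plus the unconditional verification of the geometric part (i) via the Ax–Schanuel theorem of \Cref{lect3}.
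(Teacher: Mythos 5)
This statement is stated in the paper as a \emph{conjecture} (\Cref{main conj}); the paper offers no proof of it, and you are right not to claim one. Your assessment matches the paper's own treatment: the unconditional content available today is precisely the geometric part, namely \Cref{geometricZP}, whose proof in \Cref{lect3} uses exactly the two ingredients you name — the Ax--Schanuel theorem (\Cref{astheorem}) for the functional transcendence input, and o-minimal definability of the period map (via the nilpotent orbit theorem and the definable fundamental set $\F$) to convert a countable, definable family of candidate weakly special intersections into a finite one. The genuinely open obstruction you identify — the atypical locus of zero period dimension, where one would need Pila--Zannier style point counting against Galois/Hecke orbit lower bounds for Mumford--Tate data of an arbitrary $\Z$VHS — is indeed the part for which no general method exists, which is why the statement remains conjectural.

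One caution on your step (i): \Cref{geometricZP} is weaker than the clean statement that every positive-dimensional atypical special subvariety lies in a fixed proper subvariety that one can then induct on. The theorem describes an irreducible component $Z$ of the Zariski closure of the union of atypical special subvarieties of positive period dimension, and it allows the alternative (b): $Z$ need not be special at all, but instead fibers (after splitting $\G_Z^{\ad}=\HH_Z^{\ad}\times\LL_Z$) over a family of atypical weakly special fibers, while being Hodge generic in a \emph{typical} special subvariety. So the positive-dimensional atypical locus is ``structured'' rather than outright finite; finiteness of maximal atypical special subvarieties of positive period dimension does not follow by Noetherian induction alone and is itself open in general (this is the content of the further conjectures in \cite{2021arXiv210708838B}). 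With that correction, your reduction is the same conditional picture the paper presents, and your identification of the arithmetic input as the missing piece is accurate.
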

%%%%%%%%%%%%%%%%%%%%
\begin{rmk}\label{rmklevel3}
The above conjecture is a vast generalization of the case of family of abelian varieties that we discussed before (\Cref{conjzp1}), as well as the notable study of the Noether-Lefschetz locus (cf. \Cref{ex27}). It is worth mentioning that these two examples are the cases of \emph{level 1 and 2} (a refinement of the weight of the Hodge structure, cf \cite[Sec. 4.6]{2021arXiv210708838B}). In the literature there is also a case of study in level 3, but very little (nothing, to the best of my knowledge) was speculated in level >3.

The level 3 case, is intimately related to String theory. Indeed their interest in Calabi-Yau 3-manifolds stems from their connection to conformal field theories (CFTs). Gukov and Vafa \cite{zbMATH02054562} posed a question regarding the existence of infinitely many Calabi-Yau manifolds with complex multiplication of a fixed dimension, motivated by the connection to rational conformal field theories (RCFT), introduced earlier by Friedan-Qiu-Shenker. Around the same time, Moore explored the arithmetic-string theory connection, particularly the role of attractor varieties in black hole constructions within IIB string theory. These investigations revealed connections to atypical intersections that are not necessarily of CM type. For more recent results, see also \cite{zbMATH07323024}.
\end{rmk}
%%%%%%%%%%%%%%%%%%%%
\begin{conj}[Density of the typical Hodge locus] \label{conj-typical}
 If 
$\HL(S, \VV^\otimes)_\typ$ is not empty then it is dense (for the analytic topology) in
$S$.
\end{conj}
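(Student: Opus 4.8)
The plan is to deduce density by \emph{propagating} a single typical special subvariety over all of $S$ by rational Hecke-type translations inside the generic Mumford--Tate group $\G$. Write $\Phi\colon S^{\an}\to\Gamma\backslash D$ for the period map and recall the product structure of $\Gamma$ from \eqref{period}. Since $S$ is irreducible, a Zariski-dense open subset of it is analytically dense, and finite \'etale covers of $S$ are proper surjective; so, replacing $S$ by such a cover, we may assume $\Gamma=\Gamma_1\times\cdots\times\Gamma_r$ as in \eqref{period}, density being preserved in both directions. Fix $s_0\in S$ in the dense Zariski-open locus where $\Phi$ has maximal rank, together with a contractible neighbourhood $B\ni s_0$ over which $\Phi$ lifts to a holomorphic $\widetilde\Phi\colon B\to D$ whose image $\Sigma:=\widetilde\Phi(B)$ is a locally closed complex submanifold of an open subset of $D$, of dimension $a:=\dim\Phi(S^{\an})$. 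It suffices to produce a strict typical special subvariety of $S$ meeting $B$.

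By hypothesis there is a strict typical special subvariety $Z_0$, with generic Hodge sub-datum $(\G',D')\subsetneq(\G,D)$; here $\G'$ is a reductive $\Q$-subgroup of $\G$ and $D'=\G'(\R)^+x'$ for a generic $x'\in D$. Typicality and non-emptiness of $Z_0$ give, via \Cref{atypical}, that $a-\codim_D D'=\dim\Phi(Z_0)\geq 0$. For any $g\in\G(\Q)^+$ the pair $(g\G'g^{-1},gD')$ is again a Hodge sub-datum of $(\G,D)$; since $g$ normalises a lattice commensurable with $\Gamma$, after one further finite \'etale cover the image of $gD'$ in $\Gamma\backslash D$ is a Mumford--Tate subdomain, and each irreducible component of its $\Phi$-preimage is a special subvariety of $S$, automatically strict because $\Phi(S^{\an})$ is contained in no proper Mumford--Tate subdomain. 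It therefore suffices to find $g\in\G(\Q)^+$ for which $gD'\cap\Sigma$ is non-empty and of the typical dimension $a-\codim_D D'$ near $s_0$.

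Introduce the incidence manifold
\[
I=\{(\sigma,g)\in\Sigma\times\G(\R)^+ : \sigma\in gD'\}.
\]
The map $I\to D$, $(\sigma,g)\mapsto g^{-1}\sigma$, is a submersion onto an open subset, so $I$ is smooth of dimension $\dim\G(\R)^+ + (a-\codim_D D')\geq\dim\G(\R)^+$, and the projection $q\colon I\to\G(\R)^+$ has open image $\Omega$; moreover $\Omega\neq\emptyset$, because $\bigcup_{g\in\G(\R)^+}gD'=D\supseteq\Sigma$. By generic smoothness there is a dense open $\Omega^\circ\subseteq\Omega$ over which the fibres of $q$ are smooth of pure dimension $a-\codim_D D'$; equivalently, for $g\in\Omega^\circ$ the intersection $gD'\cap\Sigma$ is non-empty and typical. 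As $\G$ is connected reductive over $\Q$, the group $\G(\Q)^+$ is dense in $\G(\R)^+$, so one may choose $g\in\Omega^\circ\cap\G(\Q)^+$; then $\widetilde\Phi^{-1}(gD'\cap\Sigma)\subset B$ is non-empty and meets a strict special subvariety $Z_g$ of $S$ whose $\Phi$-image has pure dimension $a-\codim_D D'$ near the corresponding point. Provided the generic Hodge datum of $Z_g$ equals $(g\G'g^{-1},gD')$, this forces $Z_g$ to be typical, and density of $\HL(S,\VV^\otimes)_\typ$ follows since $s_0$ and $B$ were arbitrary.

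The essential difficulty is that last proviso. Intersecting $gD'$ with $\Phi(S^{\an})$ might force unexpected Hodge classes, so that the generic Hodge datum of $Z_g$ is some $(\G'',D'')\subsetneq(g\G'g^{-1},gD')$ and $Z_g$ becomes \emph{atypical}. One must therefore show that the set of such degenerate $g$ does not swallow $\Omega^\circ\cap\G(\Q)^+$: this degeneration locus is a countable union, indexed by the types of proper sub-data $(\G'',D'')$, of loci of $g$ where $\Sigma$ meets $gD''$ in excess of its expected dimension $a-\codim_D D''<a-\codim_D D'$, each of which is a proper analytic subset of the corresponding open set of admissible $g$ and which is avoided by the $g$ realising $Z_0$ itself. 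Upgrading this to the existence of a \emph{rational} non-degenerate $g$ with $gD'\cap\Sigma$ accumulating at $s_0$ --- rather than merely a dense $G_\delta$ of real ones --- is the main obstacle. By comparison the remaining points are routine: the transversality estimate must be run on the incidence manifold $I$ and not on a single fibre (whose defining equations are not rational), and the repeated passage to finite \'etale covers used to descend $gD'$ to $\Gamma\backslash D$ is harmless for density and can anyway be bypassed by invoking the Cattani--Deligne--Kaplan algebraicity \cite{CDK} to recognise the preimages directly.
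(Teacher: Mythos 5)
You are attempting to prove \Cref{conj-typical}, which in these notes is stated as an open conjecture: the paper contains no proof of it, and the closest results it records are \Cref{typicallocus} (density of $\HL(S,\VV^\otimes)_{\typ,\pos}$ under the stronger hypothesis $\HL(S,\VV^\otimes)_{\typ,\pos}\neq\emptyset$, proved in \cite[Sec.~7 and 10]{2021arXiv210708838B} without functional transcendence) and \Cref{dense} of Eterovic--Scanlon and Khelifa--Urbanik, whose proof (sketched in \Cref{ex37}) is essentially the strategy you propose: intersect a lift of $\Phi(S^{\an})$ with a translate $gD'$, use an expected-dimension statement at a Hodge-generic point, observe openness in $g$, and conclude by density of $\G(\Q)^+$ in $\G(\R)^+$. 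What that strategy actually yields is density of the locus $\HL(S,\VV^\otimes,\G')$ of points whose Mumford--Tate group lies in a $\G(\Q)^+$-conjugate of $\G'$ --- not density of the \emph{typical} Hodge locus.

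The gap is exactly the one you flag and then do not close, and it is not a routine technicality but the reason the statement is a conjecture. An irreducible component $Z_g$ of $\Phi^{-1}$ of (the image of) $gD'$ whose period image has the expected dimension $\dim\Phi(S^{\an})-\codim_D D'$ need not have generic Hodge datum $(g\G'g^{-1},gD')$; if its generic datum $(\G'',D'')$ is strictly smaller, then by \Cref{atypical} the same dimension count makes $Z_g$ \emph{atypical} (its period dimension exceeds the expected one for $D''$), so it contributes nothing to $\HL(S,\VV^\otimes)_{\typ}$. Your proposed remedy --- that the ``degenerate'' $g$ form a countable union of loci, each proper and avoidable --- is unsubstantiated: these loci are not shown to be negligible, and even granting that, a Baire-type argument only produces a dense $G_\delta$ of \emph{real} parameters, whereas the construction needs a \emph{rational} $g$ (so that $gD'$ descends to a Mumford--Tate subdomain of $\Gamma\backslash D$ and the preimage is special by \cite{CDK}); you acknowledge both points but offer no mechanism to resolve them. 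There is also a secondary issue: your hypothesis only provides a typical special subvariety $Z_0$, possibly of zero period dimension, while the unconditional input you would want to lean on (\Cref{typicallocus}) requires a witness of positive period dimension; nothing in your argument bridges that difference. As it stands, your argument proves at most (a variant of) \Cref{dense}, and the passage from density of $\HL(S,\VV^\otimes,\G')$ to density of the typical locus remains open.
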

%%%%%%%%%%%%%%%%%%%%
At first sight, these conjectures might look formal and might be hard to grasp their meaning. We refer to \Cref{sectionex2} for a collection of exercises on special and concrete cases.
%%%%%%%%%%%%%%%%%%%%
\subsection{Results--atypical intersections}
 As above, we let $(S,\VV)$ be a polarizable $\ZZ$VHS on a smooth connected complex quasi-projective variety
$S$, and we denote by $(\G, D)$ the generic Hodge datum. In \Cref{lect3} describe a proof of the following \cite[Thm. 3.1]{2021arXiv210708838B}:

%%%%%%%%%%%%%%%%%%%%
\begin{theor}[Geometric Zilber--Pink]\label{geometricZP}
Let $Z$ be an irreducible component of the Zariski closure
of the union of the atypical special subvarieties of positive period dimension in $S$. Then either
\begin{itemize}
 \item[(a)] $Z$ is a maximal atypical special subvariety, or
  \item[(b)] the adjoint Mumford--Tate group $\G_Z^\ad$
 decomposes as a nontrivial product $\HH^\ad_Z \times \LL_Z$, $Z$ contains a Zariski-dense
set of fibers of $\Phi_{\LL_{Z}}$ which are atypical weakly special
subvarieties of $S$ for $\Phi$, where (possibly up to an \'{e}tale covering)
\[
\Phi_{|Z^\an}= (\Phi_{\HH_{Z}}, \Phi_{\LL_{Z}}): Z^\an \longrightarrow \Gamma_{\G_{Z}}\backslash D_{G_{Z}}= \Gamma_{\HH_{Z}}
\backslash D_{H_{Z}} \times \Gamma_{\LL_{Z}}\backslash D_{L_{Z}} \subset \Gamma \backslash
D,
\]
and $Z$ is Hodge generic in a special subvariety $\Phi^{-1}(
\Gamma_{\G_{Z}}\backslash D_{G_{Z}})^0$ of $S$ for $\Phi$ which is monodromically typical (and therefore typical). 
\end{itemize}
\end{theor}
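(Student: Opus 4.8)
# Proof Proposal for the Geometric Zilber–Pink Theorem

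The plan is to run the standard Bakker–Tsimerman--Klingler argument, whose engine is the Ax–Schanuel theorem for period maps (to be discussed in \Cref{lect3}), combined with a careful bookkeeping of codimensions using the monodromy decomposition \eqref{period}. First I would set up the induction: let $Z$ be an irreducible component of the Zariski closure of $\bigcup \{W : W \text{ atypical special of positive period dimension}\}$, and pass to $Z$ itself as the new ambient variety, restricting $\VV$ to $Z$ and replacing $(\G,D)$ by the generic Hodge datum $(\G_Z, D_Z)$ of $Z$. By construction $Z$ contains a Zariski-dense union of atypical special subvarieties of $S$; the first observation is that each such $W$, viewed inside $Z$, satisfies a codimension inequality relating $\codim_{\Gamma\backslash D}\Phi(W^\an)$ to $\codim_{\Gamma\backslash D}\Phi(Z^\an)$ and the codimension of the relevant Mumford--Tate subdomain. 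The key dichotomy will then be whether $Z$ is itself atypical in $S$ (case (a) is immediate, since $Z$ is special as a component of a Zariski closure of special subvarieties via Cattani--Deligne--Kaplan, and maximality follows from the definition of $Z$ as an irreducible component), or whether $Z$ is \emph{typical} in $S$.

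In the typical case the heart of the argument is to show the adjoint group $\G_Z^\ad$ splits off a nontrivial factor. Here I would invoke Ax--Schanuel for $(Z,\VV_{|Z})$: consider the incidence variety in $Z \times D_Z^\vee$ cut out by the period map (after passing to the universal cover), and observe that each atypical special $W \subset Z$ gives an atypical intersection in the Ax--Schanuel sense between the graph of $\Phi_{|Z}$ and the preimage of a Mumford--Tate subdomain. Ax--Schanuel forces these atypical intersections to be governed by a proper weakly special subvariety; but since the $W$'s are Zariski-dense in $Z$, this is only possible if the \emph{constant} part of the period map relative to some product decomposition absorbs the atypicality. Concretely, one shows that the atypicality defect $\codim_{\Gamma\backslash D}\Phi(S^\an) + \codim \Gamma'\backslash D' - \codim\Phi(W^\an)$ is ``used up'' along a sub-datum $(\HH_Z, D_{H_Z})$, which forces $\G_Z^\ad = \HH_Z^\ad \times \LL_Z$ with $\Phi_{|Z} = (\Phi_{\HH_Z}, \Phi_{\LL_Z})$, and the atypical $W$'s become (Zariski-dense) fibers of $\Phi_{\LL_Z}$. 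One then checks these fibers are weakly special (they are components of preimages of points under a factor projection) and atypical (inherited from the $W$'s), and that $Z$ is Hodge generic and monodromically typical in $\Phi^{-1}(\Gamma_{\G_Z}\backslash D_{G_Z})^0$ — the last point using that $Z$ was assumed typical in $S$ together with the definition of monodromic typicality from \cite{2021arXiv210708838B}.

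The main obstacle, as always in this circle of ideas, is extracting the \emph{product decomposition} of the adjoint group from the mere Zariski-density of atypical subvarieties — i.e. going from ``there are too many atypical intersections'' to ``the group literally splits.'' This requires more than a soft application of Ax--Schanuel: one needs the refined version that controls the \emph{generic} Hodge datum of the atypical locus (not just its dimension), and one must rule out the possibility that the atypicality is spread diffusely rather than concentrated on a single factor. The technical device is to analyze the monodromy of $\VV_{|Z}$ and use that $\Gamma_{\G_Z}$ is an arithmetic lattice that is a product $\Gamma_1 \times \cdots \times \Gamma_r$ matching the simple factors of $\G_Z^\ad$; combined with a level/codimension induction on $\dim S$ and $\dim D$, one reduces to the case where $\G_Z^\ad$ is simple and derives a contradiction with density, unless we are in case (a). I would also need to be careful that passing to finite étale covers (to split $\Gamma$) and reordering factors does not disturb the statement, which is routine but must be stated.
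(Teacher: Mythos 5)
Your route is essentially the induction/algorithm of the original paper \cite{2021arXiv210708838B}, whereas the proof presented in these notes deliberately avoids it: here one fixes a definable fundamental set, introduces the definable set $\Pi_0$ of triples $(x,g,M)$ with $\tilde{\Phi}(x)\in g\cdot D_M$, the defect functions $d,d_S$, and the subsets $\Pi_1,\Pi_2$ of \Cref{defsets} cut out by minimality of the defect; Ax--Schanuel is used only to prove \Cref{pro110} (the orbits $gMg^{-1}x$ attached to points of $\Pi_2$ are weakly special in $D$), finiteness then follows from ``countable and definable implies finite'' (\Cref{thm00}), and the dichotomy (a)/(b) is extracted from that finiteness together with the description of the countably many families of weakly special subvarieties as fibers of factor projections (Exercises 5.3.6--5.3.8). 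This ``all sources at once'' organisation, closer in spirit to \cite{MR3867286}, is what replaces the codimension induction you propose.

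That said, your sketch has two genuine gaps. First, case (a) is not immediate: an irreducible component $Z$ of the Zariski closure of an infinite union of special subvarieties is not special by any general principle --- Cattani--Deligne--Kaplan \cite{CDK} gives the algebraicity of each special subvariety, not the speciality of Zariski closures of unions of them; deciding whether $Z$ is a (maximal atypical) special subvariety is precisely the content of the dichotomy, and in case (b) $Z$ need not be special at all, only Hodge generic in a typical special subvariety. In particular your dichotomy ``$Z$ atypical versus $Z$ typical in $S$'' is not well posed before this point is settled. Second, the heart of the theorem --- passing from Zariski-density of atypical special subvarieties of positive period dimension in $Z$ to the splitting $\G_Z^\ad=\HH_Z^\ad\times\LL_Z$ with the atypical locus dense in the fibers of $\Phi_{\LL_Z}$ --- is exactly the step you flag as ``the main obstacle'' and do not carry out; saying that the atypicality defect is ``used up along a sub-datum'' restates the desired conclusion rather than proving it. To close this you need a finiteness statement of the type of \Cref{thm00} (or the induction on the atypicality defect of \cite{2021arXiv210708838B}), combined with the structure theory of weakly special subvarieties, namely that a positive-dimensional family of weakly special subvarieties forces a product decomposition of the ambient Hodge datum in which its members are fibers of one projection; without that input, the density of the atypical $W$'s yields no splitting of $\G_Z^\ad$.
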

%%%%%%%%%%%%%%%%%%%%

\subsection{Results--typical intersections}
We first describe two results that don't use functional transcendence (whose proofs can be found in  \cite[Sec. 7 and 10]{2021arXiv210708838B}).
\begin{theor}\label{typicallocus}
If the typical Hodge locus $\HL(S,\VV^\otimes)_{\typ,\pos} $ is nonempty then
$\HL(S,\VV^\otimes)_{\typ,\pos}$ is analytically (hence Zariski) dense in
$S$.
\end{theor}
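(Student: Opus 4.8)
The plan is to prove that if there exists a single typical special subvariety of positive period dimension, then the collection of all such subvarieties is dense in $S$ for the analytic topology. First I would reduce to a local statement near a point of an existing typical special subvariety $Z_0 = \Phi^{-1}(\Gamma'\backslash D')^0$, where $(\G', D') \subset (\G, D)$ is the generic Hodge sub-datum of $Z_0$. By the typicality hypothesis, the period image $\Phi(S^{\an})$ meets the Mumford--Tate subdomain $\Gamma'\backslash D'$ generically along $\Phi(Z_0)$, i.e. $\codim \Phi(Z_0^{\an}) = \codim \Phi(S^{\an}) + \codim \Gamma'\backslash D'$. The key point is that this dimension count is an \emph{open} condition: I would work on the universal cover, lifting $\Phi$ to $\widetilde\Phi : \widetilde{S} \to D$, and observe that $D^{\vee}$-translates $g \cdot D'$ of the dual MT subdomain $D'^{\vee}$ sweep out a whole family parametrized by $\G(\R)$ (or $\G(\CC)$ on the compact dual). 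The orbit of $D'$ under $\G(\R)^+$ fills an open neighborhood of any point of $D$, and since $\Gamma$ is Zariski-dense in $\G^{\ad}$ (as recalled in the discussion preceding Definition~\ref{atypical}), the $\Gamma$-translates of $\Gamma'\backslash D'$ accumulate everywhere on $\Gamma\backslash D$.

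Next I would carry out the transversality argument. Because $\Phi(Z_0)$ is a \emph{typical} (generic) component of the intersection $\Phi(S^{\an}) \cap \Gamma'\backslash D'$, the two analytic sets meet in the expected dimension at a general point $p \in \Phi(Z_0)$; hence for all $g$ in a neighborhood of the identity in $\G(\R)^+$, the translate $g\cdot(\Gamma'\backslash D')$ still meets $\Phi(S^{\an})$ in a nonempty analytic set of the expected dimension through a nearby point — intersections of the expected (maximal) dimension persist under small perturbations, whereas it is \emph{atypical} (excess) intersections that can disappear. Each such translated intersection locus, pulled back by $\Phi$, is again a special subvariety of $S$ (its generic Hodge datum being a conjugate of $(\G',D')$), and it is again typical by the same codimension equality, which is preserved under the translation. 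Varying $g$ over a neighborhood of the identity and then over all of $\G(\R)^+$, and using the $\Gamma$-equivariance together with the Zariski-density of $\Gamma$, the union of the preimages $\Phi^{-1}(g\cdot \Gamma'\backslash D')^0$ as $g$ ranges over $\G(\R)^+$ already covers a set whose image under $\Phi$ is analytically dense in $\Phi(S^{\an})$; pulling back, the typical Hodge locus is analytically dense in $S$. Analytic density then implies Zariski density.

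The main obstacle, and the step requiring the most care, is the persistence-of-typical-intersections claim: one must argue that the expected-dimension property of $\Phi(Z_0)$ inside $\Phi(S^{\an}) \cap \Gamma'\backslash D'$ genuinely propagates to nearby translates, rather than degenerating, and moreover that the resulting intersection loci have \emph{positive} period dimension (so that they land in $\HL(S,\VV^\otimes)_{\typ,\pos}$ and not merely in the zero-dimensional part). For the positivity one uses that $Z_0$ itself has positive period dimension, so $\dim \Phi(Z_0^{\an}) > 0$, and this is a numerical quantity preserved under the $\G(\R)^+$-action; the delicate analytic input is controlling how the intersection $\Phi(S^{\an}) \cap g\cdot(\Gamma'\backslash D')$ behaves — here one invokes that $\Phi(S^{\an})$, while not algebraic, is a closed analytic (indeed definable, by the results of \cite{ben}) subset of $\Gamma\backslash D$, so that semicontinuity of fiber dimension applies to the family of intersections parametrized by $g$. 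A clean way to package this is to phrase everything on the compact dual $D^{\vee}$: there $\Phi(S^{\an})$ extends to an analytic (constructible) set, the $\G(\CC)$-translates of $D'^{\vee}$ form an algebraic family, and typicality becomes a statement about a dense open locus in that parameter space where the intersection has the expected dimension. This reduces the whole argument to elementary incidence geometry on $D^{\vee}$ together with the Zariski-density of $\Gamma$.
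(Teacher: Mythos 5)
Your overall strategy (perturb the Mumford--Tate subdomain through the group action, use that expected-dimension intersections persist, and pull back) is the same one used in the reference the paper cites for this theorem and in the sketch of Theorem~\ref{dense} given in Exercise~3.5. But there is a genuine gap in the arithmetic step that produces new \emph{special} subvarieties. For an arbitrary $g\in\G(\R)^+$, the conjugate $(g\G' g^{-1}, g\cdot D')$ is not a Hodge sub-datum: $g\G' g^{-1}$ is in general not defined over $\Q$, so $\Phi^{-1}\bigl(g\cdot(\Gamma'\backslash D')\bigr)$ is not a special subvariety and does not lie in $\HL(S,\VV^\otimes)$ at all. Your claim that ``each such translated intersection locus, pulled back by $\Phi$, is again a special subvariety'' therefore fails for the translates you are using. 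Moreover, the mechanism you invoke to spread the construction over all of $S$ --- $\Gamma$-equivariance together with Zariski-density of $\Gamma$ --- produces nothing new: for $\gamma\in\Gamma$ the translate $\gamma\cdot D'$ has the same image in $\Gamma\backslash D$ as $D'$, and Zariski density of $\Gamma$ is in any case far too weak to give analytic density of translates.

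The missing ingredient is real approximation: $\G(\Q)^+$ is dense in $\G(\R)^+$, so after establishing (at a smooth point of $\Phi(Z_0)$, using typicality) that every $g$ in a neighborhood of the identity gives a nonempty expected-dimension intersection $\tilde\Phi(\tilde S)\cap g\cdot D'$, you must choose $g$ rational. Then $(g\G' g^{-1}, g\cdot D')$ is a genuine Hodge sub-datum, the preimage components are special, they are typical because the codimension of $g\cdot D'$ is unchanged and the intersection has exactly the expected dimension, and they have positive period dimension because that expected dimension equals $\dim\Phi(Z_0^{\an})>0$. With this replacement (rational $g$ in place of $\Gamma$-translates), and keeping your persistence-of-proper-intersections argument --- which is indeed the delicate analytic point and is handled in the cited source by a local argument for proper intersections of analytic germs --- the proof goes through and yields analytic density near any point of $S^{\an}$, since $\G(\R)^+$ acts transitively on $D$ and the construction can be run at translates reaching any prescribed point of $\Phi(S^{\an})$.
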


\begin{theor} \label{level criterion}
Let $\VV$ be a polarizable $\ZZ$VHS
on a smooth connected complex quasi-projective variety 
$S$, with generic Hodge datum $(\G, D)$ and algebraic monodromy group
$\HH$. Suppose that $\HH = \G^\der$. If $\VV$ is of level at
 least $3$ then $\HL(S,\VV^\otimes)_{\typ} = \emptyset$ (and thus $\HL(S,
 \VV^\otimes)= \HL(S, \VV^\otimes)_\atyp$).
\end{theor}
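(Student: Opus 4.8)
The plan is to prove the equivalent assertion that \emph{every strict special subvariety $Z\subsetneq S$ for $\VV$ is atypical}; since $\HL(S,\VV^{\otimes})_{\typ}$ is by definition the union of the strict typical special subvarieties, this gives $\HL(S,\VV^{\otimes})_{\typ}=\emptyset$ (and then $\HL(S,\VV^{\otimes})=\HL(S,\VV^{\otimes})_{\atyp}$). So fix a strict special $Z=\Phi^{-1}(\Gamma'\backslash D')^{0}$ with generic Hodge sub-datum $(\G',D')\subsetneq(\G,D)$, write $\mathfrak{g}=\Lie\G^{\ad}$, $\mathfrak{g}'=\Lie\G'^{\ad}\subseteq\mathfrak{g}$, both carrying the weight-$0$ adjoint Hodge structure, so that $\mathfrak{g}'^{\,p,-p}=\mathfrak{g}'\cap\mathfrak{g}^{p,-p}$ and $\dim D=\sum_{k\ge 1}\dim\mathfrak{g}^{-k,k}$, $\dim D'=\sum_{k\ge 1}\dim\mathfrak{g}'^{-k,k}$. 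The hypothesis $\HH=\G^{\der}$ is used to guarantee that $\Phi(S^{\an})$ is Hodge generic in $\Gamma\backslash D$ (so $(\G,D)$, not some proper sub-datum, is the ambient datum) and that no $\RR$-anisotropic/non-moving factor disturbs the level count; in particular ``$\VV$ of level $\ge 3$'' means exactly $\mathfrak{g}^{-3,3}\neq 0$.

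\textbf{Step 1: the zero period dimension case.} If $Z$ has zero period dimension then $D'$ is a point, so $\codim_{\Gamma\backslash D}(\Gamma'\backslash D')=\dim D$ and, unwinding \Cref{atypical}, atypicality of $Z$ is equivalent to $\dim_{\CC}\Phi(S^{\an})<\dim D$. By Griffiths transversality $\Phi(S^{\an})$ is a horizontal analytic subvariety, hence at a generic smooth point its tangent space lies in $\mathfrak{g}^{-1,1}$; since $\mathfrak{g}^{-3,3}\neq 0$ forces $\mathfrak{g}^{-2,2}\neq 0$ as well, the horizontal distribution is a proper subbundle of $T(\Gamma\backslash D)$, so $\dim_{\CC}\Phi(S^{\an})\le\dim\mathfrak{g}^{-1,1}<\dim D$. (In other words: at level $\ge 3$ there are no typical ``CM'' points.)

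\textbf{Step 2: the positive period dimension case --- reduction to a Hodge--Lie equality.} Now suppose $\dim_{\CC}\Phi(Z^{\an})\ge 1$ and, for contradiction, that $Z$ is typical, i.e.
\[
\codim_{\Gamma\backslash D}\Phi(Z^{\an})=\codim_{\Gamma\backslash D}\Phi(S^{\an})+\codim_{\Gamma\backslash D}(\Gamma'\backslash D'),
\]
which rearranges to $\dim\Phi(Z^{\an})=\dim\Phi(S^{\an})-\sum_{k\ge 1}\bigl(\dim\mathfrak{g}^{-k,k}-\dim\mathfrak{g}'^{-k,k}\bigr)$. On the other hand both $\Phi(S^{\an})$ and $\Phi(Z^{\an})$ are horizontal and $\Phi(Z^{\an})\subseteq\Phi(S^{\an})\cap(\Gamma'\backslash D')$, so at a Hodge generic smooth point of $Z$ the tangent space of $\Phi(Z^{\an})$ sits inside $T\Phi(S^{\an})\cap\mathfrak{g}'^{-1,1}\subseteq\mathfrak{g}^{-1,1}$. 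Extracting from this the bound $\dim\Phi(Z^{\an})\ge\dim\Phi(S^{\an})+\dim\mathfrak{g}'^{-1,1}-\dim\mathfrak{g}^{-1,1}$ and comparing with the displayed typicality identity yields $\sum_{k\ge 2}\bigl(\dim\mathfrak{g}^{-k,k}-\dim\mathfrak{g}'^{-k,k}\bigr)\le 0$; since each summand is $\ge 0$, we get the key equality
\[
\mathfrak{g}'^{\,-k,k}=\mathfrak{g}^{-k,k}\qquad\text{for all }k\ge 2
\]
and, by complex conjugation (as $\mathfrak{g}'$ is defined over $\QQ$), also for all $k\le -2$.

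\textbf{Step 3: conclusion via a generation lemma, and the main obstacle.} Thus the Lie subalgebra $\mathfrak{g}'$ contains $\bigoplus_{|k|\ge 2}\mathfrak{g}^{-k,k}$, hence the subalgebra it generates; it then suffices to prove the purely Lie-theoretic statement: \emph{if $(\G,D)$ has level $\ge 3$ (and $\G^{\ad}$ has no anisotropic factor), then $\bigoplus_{|k|\ge 2}\mathfrak{g}^{-k,k}$ generates $\mathfrak{g}$.} I would do this by decomposing $\G^{\ad}=\G_{1}\times\cdots\times\G_{m}$ into $\RR$-simple factors (each of which must itself have level $\ge 3$, which also rules out diagonal-type sub-data), reducing to $\G$ $\RR$-simple, and then running the graded bracket calculus: $[\mathfrak{g}^{-3,3},\mathfrak{g}^{2,-2}]\subseteq\mathfrak{g}^{-1,1}$, $[\mathfrak{g}^{-2,2},\mathfrak{g}^{2,-2}]\subseteq\mathfrak{g}^{0,0}$, together with their conjugates and iterates, recover $\mathfrak{g}^{\pm1,\mp1}$ (and enough of $\mathfrak{g}^{0,0}$) using the structure of the adjoint Hodge structure --- e.g. via the $\mathfrak{sl}_2$-triple attached to the Hodge cocharacter, or by a short case analysis over Dynkin types. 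Once $\bigoplus_{|k|\ge 2}\mathfrak{g}^{-k,k}$ generates $\mathfrak{g}$ we conclude $\mathfrak{g}'=\mathfrak{g}$, so $\G'=\G$, $D'=D$, and $Z=S$, contradicting strictness. I expect the main obstacle to be twofold: (i) making Step 2 fully rigorous, i.e. deducing $\mathfrak{g}'^{\,-k,k}=\mathfrak{g}^{-k,k}$ ($k\ge 2$) from typicality --- one must control how $\Phi(S^{\an})$ sits along $Z$ relative to $\Gamma'\backslash D'$ (the naive equality of tangent spaces can fail by excess tangency), which is where horizontality of \emph{both} $\Phi(S^{\an})$ and $\Phi(Z^{\an})$ and the genericity coming from $\HH=\G^{\der}$ are really needed; and (ii) the generation lemma of Step 3, where the hypothesis $\mathfrak{g}^{-3,3}\neq 0$ is essential --- at level $\le 2$ one has $\mathfrak{g}^{-3,3}=0$, the bracket $[\mathfrak{g}^{-3,3},\mathfrak{g}^{2,-2}]$ disappears, the conclusion genuinely fails, and indeed Noether--Lefschetz-type typical loci appear.
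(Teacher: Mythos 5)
Your overall architecture is the intended one (the paper's proof, deferred to \cite[Sec.~7]{2021arXiv210708838B}, does exactly this: typicality forces ${\fg'}^{k,-k}=\fg^{k,-k}$ for all $|k|\ge 2$, and then the Hodge--Lie rigidity statement quoted in the remark after the theorem gives $\fg'=\fg$, contradicting strictness), but your Step 2 contains a genuine logical gap at precisely the point where the real work happens. From the containment $T\Phi(Z^{\an})\subseteq T\Phi(S^{\an})\cap\fg'^{-1,1}$ you can only extract an \emph{upper} bound $\dim\Phi(Z^{\an})\le\dim\bigl(T\Phi(S^{\an})\cap\fg'^{-1,1}\bigr)$; the inequality you actually need, $\dim\Phi(Z^{\an})\ge\dim\Phi(S^{\an})+\dim\fg'^{-1,1}-\dim\fg^{-1,1}$, is a \emph{lower} bound on the dimension of the special subvariety and does not follow from tangent-space containments (chaining your upper bound with the linear-algebra lower bound for the intermediate space proves nothing). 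The correct source of this lower bound is the local description of $Z$ as the locus where the Hodge tensors defining $\G'$ remain Hodge: by Griffiths transversality the number of local equations cutting this locus out of $S$ is at most $\dim\fg^{-1,1}-\dim\fg'^{-1,1}$ (the generalization of the classical Noether--Lefschetz estimate $\codim\NL(\lambda)\le h^{2,0}$), so every component of the Hodge locus attached to $(\G',D')$ has codimension at most $\dim\fg^{-1,1}-\dim\fg'^{-1,1}$; only then does typicality, i.e.\ $\codim_{\Gamma\backslash D}\Phi(Z^{\an})\ge\codim\Phi(S^{\an})+\codim\Gamma'\backslash D'$, force $\sum_{k\ge2}\bigl(\dim\fg^{-k,k}-\dim\fg'^{-k,k}\bigr)\le 0$. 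You flag this as ``obstacle (i)'' but your written derivation is backwards, and this estimate is exactly where horizontality is used in \cite{2021arXiv210708838B}. A smaller slip: in Step 1, zero period dimension of $Z$ does not imply that $D'$ is a point (the Mumford--Tate group of a point need not be a torus), and in fact no case division is needed once the codimension estimate above is in place.

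Concerning Step 3: your ``generation lemma'' is essentially equivalent to the quoted result (\cite[Prop.~7.5]{2021arXiv210708838B}), since the subalgebra generated by $\bigoplus_{|k|\ge2}\fg^{-k,k}$ is automatically a real Hodge--Lie subalgebra agreeing with $\fg$ in levels $|k|\ge 2$; so reformulating it buys nothing, and you still owe the nontrivial representation-theoretic proof, including the hypothesis (which you omit) that $\fg$ be \emph{generated in level one}, without which the statement is not the one used. You also do not address how $\HH=\G^{\der}$ and the structure theorem reduce to the case of a simple factor and guarantee this level-one generation. So the proposal correctly reconstructs the skeleton of the argument but leaves unproved both of its two pillars: the transversality-based codimension bound (misderived) and the Hodge--Lie rigidity lemma (only sketched).
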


\begin{rmk}
The above theorem roughly states that outside the case of surfaces and abelian varieties, every special subvariety is an atypical intersection. This is of course an oversimplification of the concept of level. The proof of the theorem rests upon the following representation theoretical result which may be helpful to state, see  \cite[Prop. 7.5]{2021arXiv210708838B}). Suppose $\fg_\RR$ is a simple $\RR$-Hodge-Lie algebra generated in level
  $1$ and of level at least $3$ (i.e.. $\fg^2=[\fg^1,\fg^1]$, $\fg^3=[\fg^{1},\fg^2]$ and so on, and $\fg^3\neq 0$). If $\fg'_\RR \subset \fg_\RR$ is an $\RR$-Hodge-Lie subalgebra
  satisfying ${\fg'}^i = \fg^i$ for all $|i|\geq 2$ then $\fg'=
  \fg$.

\end{rmk}

We record here two results that use once more some functional transcendence. Hodge theory actually gives a simple combinatorial criterion to decide whether $\HL(S,\VV^\otimes)_{\typ}$ is empty or not. Indeed see the recent works of Eterovic-Scanlon, Khelifa-Urbanik \cite{2022arXiv221110592E, 2023arXiv230316179K}. They defined:

\begin{defi}
A strict Hodge sub-datum $(\mathbf{H}, D_H) \subset (\mathbf{G}, D_G)$ is said $\VV$-\emph{admissible} if
\begin{displaymath}
\dim \Phi (S^{\operatorname{an}})  + \dim D_M \geq \dim D.
\end{displaymath}
\end{defi}

\begin{theor}[Eterovic-Scanlon, Khelifa-Urbanik]\label{dense}
If $(\mathbf{H}, D_H) \subset (\mathbf{G}, D_G)$ is a $\VV$-admissible Hodge sub-datum, then 
\begin{displaymath}
\HL(S, \VV^{\otimes}, \mathbf{H}):=\{s\in S : \exists g \in \mathbf{G}(\Q)^+, \MT(\VV_s)\subset g \mathbf{H} g^{-1}\}
\end{displaymath}
 is dense in $S^{\operatorname{an}}$.
\end{theor}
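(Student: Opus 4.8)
\noindent We sketch a possible approach to \Cref{dense}. The plan is to pass to the universal cover, reduce the density statement to a local incidence problem in the period domain, and solve that by using the admissibility inequality as a dimension count, together with the density of $\G(\Q)^+$ in $\G(\R)^+$. Fix a lift $\tilde\Phi\colon \tilde S\to D$ of $\Phi$ to the universal cover $\tilde S$ of $S^{\an}$ and write $\pi_D\colon D\to\Gamma\backslash D$. A point $y\in D$ parametrises a Hodge structure $h_y\colon\mathbb S\to\G_\R$, and for $g\in\G(\Q)^+$ one has $y\in gD_H$ if and only if $h_y$ factors through $g\HH_\R g^{-1}$, i.e.\ if and only if $\MT(h_y)\subseteq g\HH g^{-1}$ (the latter group being defined over $\Q$ since $g$ is). Hence $\HL(S,\VV^\otimes,\HH)=\tilde\Phi^{-1}\bigl(\bigcup_{g\in\G(\Q)^+}gD_H\bigr)$ for any choice of lift, and to prove analytic density it is enough to show that for every nonempty open $\tilde U\subseteq\tilde S$ there is $g\in\G(\Q)^+$ with $gD_H\cap\tilde\Phi(\tilde U)\neq\emptyset$. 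Shrinking $\tilde U$ into the open dense locus where $\Phi$ attains its generic rank $d:=\dim\Phi(S^{\an})$ and using the constant-rank theorem, we may assume $N:=\tilde\Phi(\tilde U)$ is a $d$-dimensional complex submanifold of $D$.

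Next, consider the action map $\mu\colon\G(\R)^+\times D_H\to D$, $(g,x)\mapsto gx$. It is a surjective submersion, because already $g\mapsto gx$ is the quotient map $\G(\R)^+\to\G(\R)^+/\Stab(x)=D$ for fixed $x\in D_H$. Therefore $W:=\mu^{-1}(N)$ is a nonempty real submanifold with
\[
\dim_\R W=\dim_\R\G(\R)^+ +\dim_\R D_H-\codim_\R N=\dim_\R\G(\R)^+ +2\bigl(\dim_\C D_H+d-\dim_\C D\bigr),
\]
which is $\ge\dim_\R\G(\R)^+$ \emph{precisely because $(\HH,D_H)$ is $\VV$-admissible}. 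Let $p\colon W\to\G(\R)^+$ be the first projection, so that $p(W)=\Sigma:=\{g\in\G(\R)^+:\ gD_H\cap N\neq\emptyset\}$; as $\G(\Q)^+$ is dense in $\G(\R)^+$, it suffices to show $\Sigma$ has nonempty interior. By the constant-rank theorem and a Baire-category argument, $\Sigma$ has nonempty interior as soon as $p$ is a submersion at some point of $W$; and a routine differential computation identifies this with the condition that $N$ and the translate $gD_H$ meet \emph{transversally} in $D$ at the corresponding point $gx$.

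The crux --- and the step I expect to be the real obstacle --- is thus to produce a single transversal intersection. Fixing $y_0\in N$, the translates of $D_H$ through $y_0$ are exactly the $K_{y_0}$-translates of a fixed one, where $K_{y_0}=\Stab_{\G(\R)^+}(y_0)$ is the compact isotropy group; accordingly the tangent spaces $T_{y_0}(gD_H)$ run over the orbit of $T_{y_0}D_H$ under the linear isotropy representation of $K_{y_0}$ on $T_{y_0}D$. Since $\Phi$ is horizontal (Griffiths transversality), $T_{y_0}N$ lies in the horizontal subspace, and the isotropy representation respects the grading of $T_{y_0}D$ induced by the Hodge structure on $\fg$. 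One must then show that admissibility forces $\HH$ to be large enough along the deeper graded pieces for the dimension count to close up \emph{in each graded piece}, and --- letting $y_0$ range over the $d$-dimensional $N$ and using the Zariski-density of the monodromy to see that $T_{y_0}N$ sweeps out enough positions --- conclude that some configuration is in general position. Granting this, $\Sigma$ contains a nonempty open set, hence a point of $\G(\Q)^+$, and the statement follows.

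Two closing remarks. The special subvarieties $\Phi^{-1}(\pi_D(gD_H))^0$ produced this way are typical (the admissibility inequality becomes an equality in general position), so \Cref{dense} is consistent with, and complementary to, \Cref{typicallocus} and \Cref{conj-typical}. And o-minimality enters this circle of ideas essentially as ambient input: it is the definability of $\tilde\Phi$ on a fundamental set (and the semialgebraicity of $D$, $D_H$ and the $\G$-action) that makes the loci above well behaved and that underlies the Cattani-Deligne-Kaplan-type structure of $\HL(S,\VV^\otimes,\HH)$; the density argument sketched here is then ``soft'' once the transversality input is in hand.
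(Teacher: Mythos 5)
Your formal reductions are fine: passing to the universal cover, noting that $\tilde\Phi(\tilde s)\in gD_H$ with $g\in\G(\Q)^+$ forces $\MT(\VV_s)\subset g\HH g^{-1}$ (only this inclusion is needed, your displayed equality for $\HL(S,\VV^\otimes,\HH)$ is slightly too strong but harmless), and the dimension count $\dim_\R W\geq\dim_\R\G(\R)^+$ coming from admissibility. But these only reproduce the soft part of the statement: a fibre-dimension count never gives that $\Sigma=p(W)$ has nonempty interior, since all fibres of $p$ could be positive dimensional over a thin image --- which is exactly the ``all intersections are atypical'' scenario the theorem must exclude. You recognize this and then leave the decisive step (``produce a single transversal intersection'') as an assumption: the paragraph on isotropy representations, graded pieces of $T_{y_0}D$ and monodromy ``sweeping out enough positions'' is not an argument, and it is doubtful it can become one, because no infinitesimal computation at a point rules out the global possibility that $\tilde\Phi(\tilde S)$ is positioned so that every intersection with every translate $gD_H$ is excessive or empty. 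That obstruction is of functional-transcendence nature, and this is precisely where the paper's proof (see the sketch in \Cref{ex37}) and the original arguments of Eterovic--Scanlon and Khelifa--Urbanik invoke Ax--Schanuel.

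Concretely, the missing step is filled as follows: pick $\tilde s$ Hodge generic, in particular not contained in any strict weakly special subvariety of $S$ for $\VV$, and $g\in\G(\R)$ with $\tilde\Phi(\tilde s)\in g\cdot D_H$. If the analytic component of $\tilde\Phi(\tilde S)\cap g\cdot D_H$ through $\tilde\Phi(\tilde s)$ had atypically large dimension, then \Cref{astheorem} would force its projection, hence $s$ itself, into a strict weakly special subvariety --- contradiction. So the intersection at $\tilde s$ has exactly the expected dimension, which is nonnegative by admissibility; such a proper (expected-dimensional) intersection is nonempty and persists for every $g'$ sufficiently close to $g$, and the density of $\G(\Q)^+$ in $\G(\R)^+$ then produces the rational translates you need. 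Note also that expected dimension, not genuine transversality, is all that the persistence step requires; by routing the argument through submersivity of $p$ you have made your missing step strictly harder than what the theorem actually needs.
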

(We will come back to this interesting statement in \Cref{ex37}.)

On a related direction, there is another result (taken from \cite{2024arXiv240203601B}), which builds on the geometric Zilber-Pink to characterise VHS from their Hodge Locus (when possible).
\begin{defi}\label{def:isog}
Let $\mathbb{V}_1, \VV_2\in \Z \text{VHS}/S$. We say that $\VV_1$ and $\VV_2$ are \emph{isogenous} if there is an equivalence of tensor categories $\langle \VV_{1,\Q} \rangle^\otimes \cong \langle \VV_{2,\Q} \rangle^\otimes$, where $ \langle \VV_{i,\Q} \rangle^\otimes$ denotes the smallest Tannakian subcategory of $\Q$VHS's containing $\VV_{i,\Q}$. 
\end{defi}
%%%%%%%%%%%%%%%%%%%%
Of course the Tannakian categories $\langle \VV_{i,\Q} \rangle^\otimes$ appearing above are equivalent (as tensor categories) to the category of finite dimensional representations of their generic Mumford-Tate group. (The equivalence is realised by the functor of $\otimes$-automorphisms of the fiber functor). It can happen that two VHS have isomorphic Mumford-Tate groups, but the isomorphism does not induce an equivalence of tenor categories. cf. Def. 1.10 and Thm. 2.11 in the article of Deligne and Milne \cite{zbMATH03728195}.
%%%%%%%%%%%%%%%%%%%%
\begin{rmk}\label{rmkonabelian}
If two complex principally polarized abelian varieties $A,B$ that are isogenous in the usual sense (either via a polarized or an unpolarized isogeny), then the Hodge structures $H^1(A,\Z), H^1(B,\Z)$ are also isogenous in the sense of \Cref{def:isog} (here we take as base $S$ the spectrum of $\C$). However the converse is in general not true, for example $H^1(A,\Z)$ is isogenous to $H^1(A\times A, \Z)$. However, for two principally polarized $g$-dimensional abelian varieties with Mumford--Tate group $\mathbf{GSp}_{2g}$ the two notions agree, since this is the case when the Mumford--Tate group is as big as possible.
\end{rmk}

\begin{theor}[{\cite[Thm. 1.18]{2024arXiv240203601B}}]\label{hodgethm}
Let $S$ be a smooth quasi-projective variety and $\mathbb{V}_1, \mathbb{V}_2$ two pure polarized $\Z$VHSs on $S$. Assume that the generic Mumford--Tate groups of $\VV_1$ and $\VV_2$ are $\QQ$-simple. 
If 
\[
\HL(S,\mathbb{V}_1^\otimes)_{\pos, \typ}=\HL(S,\mathbb{V}_2^\otimes)_{\pos, \typ}\neq \emptyset,
\]
 then $\mathbb{V}_1$ is isogenous to $\mathbb{V}_2$. As a consequence, $\HL(S,\mathbb{V}_1^\otimes)=\HL(S,\mathbb{V}_2^\otimes)$.
\end{theor}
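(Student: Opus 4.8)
The plan is to recover each $\VV_i$ from the shared typical Hodge locus by a two-step argument: first extract the generic Hodge datum (equivalently, the generic Mumford--Tate group together with its representation on $\VV_i$) from the geometry of $\HL(S,\VV_i^\otimes)_{\pos,\typ}$, and then promote the resulting identification of Hodge data to an isogeny in the sense of \Cref{def:isog}. Write $(\G_i,D_i)$ for the generic Hodge datum of $\VV_i$, $\Phi_i\colon S^\an\to\Gamma_i\backslash D_i$ for the period map, and $\HH_i=\G_i^{\der}$ for the algebraic monodromy group (after passing to a finite étale cover of $S$, which changes none of the loci in question). Since $\G_i$ is $\Q$-simple, there are essentially no ``product'' complications: every positive-dimensional special subvariety is cut out by a Hodge sub-datum whose derived group is a proper $\Q$-subgroup, and \Cref{geometricZP} applied to $\VV_i$ tells us that, because $\G_{Z}^{\ad}$ cannot decompose nontrivially, alternative (b) is unavailable for maximal atypical components — so the atypical part is genuinely a finite union, and the \emph{typical} positive-dimensional special subvarieties are exactly the components of $\Phi_i^{-1}(\Gamma'\backslash D')^0$ for $\VV_i$-admissible strict sub-data $(\HH',D')$, which by \Cref{typicallocus} (or \Cref{dense}) form an analytically dense family.

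The heart of the matter is the following reconstruction step. Through a very general point $s\in S$ the typical special subvarieties passing through $s$ are indexed (via their generic Hodge data) by the $\G_i(\Q)^+$-conjugacy classes of admissible sub-data of $(\G_i,D_i)$; their tangent directions at $s$ are the images under $d\Phi_i$ of the corresponding Mumford--Tate sub-domains' tangent spaces, hence are governed by the sub-Lie-algebras $\fh'\subset\fg_i$ that occur. Knowing $\HL(S,\VV_1^\otimes)_{\pos,\typ}=\HL(S,\VV_2^\otimes)_{\pos,\typ}$ as a \emph{set} forces these two families of ``special tangent directions'' at a common very general point to coincide inside $T_sS$. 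Using that $d\Phi_i$ has image the horizontal tangent space and is injective modulo the kernel coming from the trivial-monodromy factor $D'$ (which is absent here, as $\G_i$ is $\Q$-simple and the monodromy is Zariski-dense), one transports this back to an identification of the collections of admissible Hodge-sub-Lie-algebras of $\fg_1$ and $\fg_2$. A Tannakian / representation-theoretic argument — reconstructing a semisimple group from its lattice of reductive subgroups realizing sub-Hodge-structures, in the spirit of the remark following \Cref{level criterion} and of the Deligne--Milne reconstruction \cite{zbMATH03728195} — then yields an isomorphism $\G_1\cong\G_2$ carrying the Hodge cocharacter of $\VV_1$ to that of $\VV_2$ and, crucially, matching up the two representations $\G_i\hookrightarrow\GL(\VV_{i,s})$ as \emph{polarized Hodge representations}. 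That last clause is exactly what an equivalence $\langle\VV_{1,\Q}\rangle^\otimes\cong\langle\VV_{2,\Q}\rangle^\otimes$ amounts to, giving the desired isogeny; the final sentence $\HL(S,\VV_1^\otimes)=\HL(S,\VV_2^\otimes)$ then follows because the full Hodge locus (atypical part included) is intrinsic to the Tannakian category $\langle\VV_{i,\Q}\rangle^\otimes$.

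The main obstacle I expect is precisely the passage from ``the typical special subvarieties agree as subsets of $S$'' to ``the abstract Hodge data agree.'' Two distinct Hodge data can a priori produce the same list of subvarieties if, for instance, the relevant period maps factor through a common quotient, or if two non-conjugate admissible sub-data happen to have $\Phi_i$-preimages with the same components — one must rule this out, and the tool is functional transcendence (Ax--Schanuel for period maps, \Cref{lect3}), which controls exactly when an algebraic locus in $S$ can coincide with a $\Phi_i$-preimage of a sub-domain. Concretely, one compares the two period maps $\Phi_1,\Phi_2$ on the diagonal VHS $\VV_1\oplus\VV_2$: Ax--Schanuel applied there shows that if the two typical Hodge loci coincide densely, the graph of the induced correspondence between $D_1$ and $D_2$ must itself be (weakly) special, which is what forces the Hodge data to be isomorphic. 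A secondary technical point is making sure ``very general'' can be chosen outside the (countably many) atypical components and outside the loci where $d\Phi_i$ degenerates, so that the tangent-space comparison is legitimate — this is routine given Cattani--Deligne--Kaplan, but needs to be stated. Once the correspondence is known to be special, the reconstruction of the polarized Hodge representation, hence of the isogeny class, is formal.
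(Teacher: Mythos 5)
The decisive step of your proposal --- the ``reconstruction'' paragraph --- has a genuine gap. Its starting point is already vacuous: by Cattani--Deligne--Kaplan the Hodge locus is a countable union of strict special subvarieties, so a \emph{very general} point of $S$ lies on none of them, and there are no ``typical special subvarieties passing through $s$'' whose tangent directions you could compare. More seriously, even working at general points of the common locus, the passage from ``the same subvarieties of $S$'' to ``the same collection of admissible Hodge sub-Lie-algebras of $\fg_1$ and $\fg_2$, hence an isomorphism of Hodge data matching the two polarized representations'' is exactly the content of the theorem, and the tool you invoke --- reconstructing a semisimple group together with its Hodge representation from a ``lattice of reductive subgroups realizing sub-Hodge structures'' --- is not an available result; the Deligne--Milne formalism needs the tensor and fibre-functor data, which a set-theoretic equality of loci does not supply. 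Two further slips: in \Cref{geometricZP} the group that may split in alternative (b) is $\G_Z^{\ad}$, the generic Mumford--Tate group of the component $Z$, not the ambient group, so $\Q$-simplicity of $\G_i$ does not by itself discard (b); and admissibility of a sub-datum is only the numerical condition permitting typicality, so components of preimages of admissible sub-data need not be typical.

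The intended route (see the hint in \Cref{ex211}) goes through the product, which your last paragraph brushes against without carrying out. Set $\VV=\VV_1\oplus\VV_2$, whose generic Mumford--Tate group $\G$ sits inside $\G_1\times\G_2$ and surjects onto each factor. By a Goursat-type argument and the $\Q$-simplicity hypothesis, either $\G$ is (essentially) the graph of an isogeny between $\G_1$ and $\G_2$ --- and that case \emph{is} the conclusion: it gives the equivalence $\langle\VV_{1,\Q}\rangle^\otimes\cong\langle\VV_{2,\Q}\rangle^\otimes$ of \Cref{def:isog}, from which $\HL(S,\VV_1^\otimes)=\HL(S,\VV_2^\otimes)$ is formal --- or $\G$ is the full product. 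The whole work is to exclude the second case, and the mechanism is the dimension count you never make: a subvariety lying in the common locus is a \emph{typical} intersection for $\VV_1$ and for $\VV_2$ separately, but when one passes to the product period domain $D_1\times D_2$ the codimensions of the sub-domains add up while the image of $(\Phi_1,\Phi_2)$ does not, and one shows that such a subvariety becomes an \emph{atypical} special subvariety for $\VV$ when $\G=\G_1\times\G_2$. Since \Cref{typicallocus} makes the common typical locus analytically, hence Zariski, dense in $S$ as soon as it is nonempty, this would produce a Zariski-dense union of positive-dimensional atypical special subvarieties for $\VV$, contradicting \Cref{geometricZP} (whose case (b) must be analysed, and it is there that $\Q$-simplicity of the factors genuinely enters). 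Your appeal to Ax--Schanuel ``showing that the graph of the induced correspondence between $D_1$ and $D_2$ is weakly special'' presupposes that such a correspondence exists, i.e.\ that $\G$ is not the full product --- which is precisely what has to be proved.
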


\newpage

\section{Lecture 3: Ax-Schanuel and applications}\label{lect3}
We are finally ready to discuss the topic of \emph{functional transcendence} and the main theorem of the theory: Ax-Schanuel. It will provide a vast generalization of \Cref{althm1}.
\subsection{Statements}
Let $S$ be a smooth complex quasi-projective variety supporting a
polarizable $\Z$VHS $\mathbb{V}$, with generic Hodge datum $(\G, D)$, 
algebraic monodromy group $\HH$, and period map $\Phi: S \to \Gamma_{\HH}
\backslash D_{H} \subset \Gamma \backslash D$. Without loss of
generality we can assume, replacing if necessary $S$ by a finite
\'etale cover, that $\Gamma$ is torsion-free. Let $\tilde{\Phi}:
\widetilde{S^\an} \to D_H$ be the lift of $\Phi$ at the level of universal coverings.

As already discussed, the domain $D_H$ is canonically embedded as an
open complex analytic real semi-algebraic subset in a flag variety
$D_H^{\vee}$ (called its \emph{compact dual}). For example, let $\mathcal{L}=\mathcal{L}(V_\Q , \varphi)$ denote the symplectic Grassmanian of Lagrangian,  i.e. the scheme parametrizing maximal isotropic subspaces of $V_\Q$ with respect to the form $\varphi$. The map $\mathbb{H}_g\to \mathcal{L}(\C)$ that sends a Hodge structure $y\in \mathbb{H}_g$ to the corresponding Hodge filtration $\operatorname{Fil}^0 \subset V_\C$ is the open immersion (the Borel embedding) of the $g$-dimensional Siegel space into its compact dual.

 We define
an irreducible algebraic subvariety of $D_H$ (resp. $S \times D_H$) as a complex analytic
irreducible component of the intersection of an algebraic subvariety
of $D_H^{\vee}$ (resp. $S \times D_{H}^{\vee}$) with $D_H$ (resp. $S
\times D_{H}$).

We have already introduce the special subvarieties of $(S,\VV)$ in this context but in the sequel, and in general when ignoring the arithmetic side of the story, there is a better suited concept of \emph{weakly special} subvarieties. 
\begin{defi}
The weakly special subvarieties of $(S,\VV)$ are the closed irreducible algebraic subvarieties $Y\subset S$ maximal among the closed irreducible algebraic subvarieties $Z$ of $S$ whose algebraic monodromy group $\mathbf{H}_Z$ with respect to $\VV$ equals $\mathbf{H}_Y$. 
\end{defi}
Any special subvariety (defined in \Cref{lect2}) is weakly special but the converse does not hold. The reader might wish to consult \cite[Sec. 4.4]{2021arXiv210708838B} for more details, and properties, as well as \Cref{ex31} and \Cref{ex32} (which gives the basis to understand the link between the different notions).

The following result is the so called \emph{Ax-Schanuel Theorem} (for pure variations of Hodge structures). It was conjectured by B. Klingler \cite[Conj.
7.5]{klin} and later proved by Bakker and Tsimerman \cite[Thm.
1.1]{MR3958791}, generalising the work of Mok, Pila and Tsimerman
\cite[Thm. 1.1]{as} from level one to arbitrary levels.

\begin{theor}[Ax-Schanuel for VHS]\label{astheorem}  
Let $W \subset S \times D_H$ be an algebraic subvariety. Let $U$ be an
irreducible complex analytic component of $W \cap S \times_{\Gamma_\HH \backslash D_H} D_H$
such that 
\begin{displaymath}
\codim_{S \times D_{H}} U< \codim_{S \times D_{H}} W+ \codim_{S
  \times D_{H}} (S \times_{ \Gamma_{\HH} \backslash D_{H}}
D_H)\;\;.  
\end{displaymath}
Then the projection of $U$ to $S$ is contained in a strict weakly
special subvariety of $S$ for $\VV$.
\end{theor}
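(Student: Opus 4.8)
The plan is to reduce the Ax–Schanuel theorem for VHS to the Ax–Schanuel theorem for a graph of period-type map in the context where the target $\Gamma_\HH \backslash D_H$ is viewed through its uniformization, and then invoke the functional transcendence machinery — specifically the version proved by Bakker–Tsimerman. In the approach I would follow (which is essentially the one in \cite{MR3958791}, \cite{2021arXiv210803384B}), the starting point is the o-minimal/definable structure: after passing to a finite étale cover so that $\Gamma$ is torsion-free, the quotient $\Gamma_\HH \backslash D_H$ is a definable complex analytic space (this uses the definable GAGA-type results recalled in \cite{ben}), and the period map $\Phi\colon S^{\an}\to\Gamma_\HH\backslash D_H$ is definable. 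The fibered product $S\times_{\Gamma_\HH\backslash D_H} D_H$ is therefore a definable complex analytic subset of $S\times D_H$, and its irreducible analytic components $U$ inside an algebraic $W$ are objects to which the theory of atypical intersections in the definable category applies.

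\smallskip
\noindent
\textbf{Step 1: Set up the ``mixed'' period–coordinate picture and the monodromy action.} Write $\widetilde{S^\an}\to S^\an$ for the universal cover and $\tilde\Phi\colon\widetilde{S^\an}\to D_H$ for the lift. The space $S\times_{\Gamma_\HH\backslash D_H} D_H$ is covered by translates $\gamma\cdot\operatorname{graph}(\tilde\Phi)$, $\gamma\in\Gamma_\HH$, of the graph of $\tilde\Phi$ inside $\widetilde{S^\an}\times D_H$ (pushed down to $S\times D_H$). An analytic component $U$ satisfying the codimension inequality is then an atypical intersection of the algebraic $W$ with (a piece of) this graph. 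The goal is: the projection $\pi_S(U)$ lies in a strict weakly special subvariety, i.e. a subvariety whose algebraic monodromy group is strictly smaller than $\HH$.

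\smallskip
\noindent
\textbf{Step 2: Invoke the Ax–Schanuel input.} The heart of the proof is the Ax–Schanuel theorem of Bakker–Tsimerman \cite[Thm.~1.1]{MR3958791}, which is proved by combining: (i) the definability of $\Phi$ together with the Pila–Wilkie counting theorem to produce, from an atypical $U$, a positive-dimensional algebraic family of such intersections — or more precisely a positive-dimensional ``algebraic flow'' inside the relevant bi-algebraic structure; and (ii) a monodromy/volume argument (the key new ingredient of \emph{op.\ cit.}, replacing the complex-hyperbolic geometry of Mok–Pila–Tsimerman \cite{as}) showing that the Zariski closure of the orbit of such a flow under $\Gamma_\HH$ is acted on by a proper algebraic subgroup of $\HH$. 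One then identifies, via the theory of weakly special subvarieties (cf. \cite[Sec.~4.4]{2021arXiv210708838B}), the image $\pi_S(U)$ with a subvariety whose algebraic monodromy group is contained in that proper subgroup, hence is strictly weakly special.

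\smallskip
\noindent
\textbf{Step 3: Bookkeeping of the codimension and passing from the cover.} Finally I would check that the codimension inequality in the statement is exactly the one that makes $U$ ``atypical'' in the sense needed for Step 2 — this is a routine comparison of $\codim_{S\times D_H} U$ with $\codim W + \codim(S\times_{\Gamma_\HH\backslash D_H}D_H) = \codim W + \dim D_H$ — and that the conclusion descends from the finite étale cover back to the original $S$ (weakly special subvarieties pull back and push forward along finite étale covers). \textbf{The hard part is Step 2}, and within it the monodromy argument: controlling the Zariski closure of the $\Gamma_\HH$-orbit of the algebraic flow and showing it cannot be all of $D_H$ unless $\pi_S(U)$ was non-atypical to begin with. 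This is precisely where \cite{MR3958791} improves on \cite{as}, and in an exposition I would either black-box it or sketch the volume/growth estimate that forces the orbit closure to be proper. The o-minimality (Pila–Wilkie) enters only in producing the flow in Step 2(i); everything else is algebraic geometry and group theory.
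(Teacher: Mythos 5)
The first thing to note is that the paper contains no proof of \Cref{astheorem}: it is quoted as a theorem of Bakker and Tsimerman \cite[Thm.~1.1]{MR3958791}, conjectured by Klingler \cite{klin} and generalizing Mok--Pila--Tsimerman \cite{as}, and it is then used as a black box (e.g.\ in the proof of \Cref{geometricZP}). So the only meaningful comparison for your write-up is with \cite{MR3958791} itself, and measured that way your proposal has a genuine gap: it is a description of a proof rather than a proof, and its central step is circular. Step~2 ``invokes the Ax--Schanuel theorem of Bakker--Tsimerman \cite[Thm.~1.1]{MR3958791}'', which is precisely the statement you set out to establish. Everything you actually carry out (Step~1: identifying $S\times_{\Gamma_\HH\backslash D_H}D_H$ with the image of the graph of $\tilde\Phi$ translated by $\Gamma_\HH$; Step~3: the bookkeeping $\codim_{S\times D_H}(S\times_{\Gamma_\HH\backslash D_H}D_H)=\dim D_H$ and descent along a finite \'etale cover) is routine, while the entire functional-transcendence core is named and deferred.

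Your thumbnail of the Bakker--Tsimerman strategy is broadly accurate as a caricature: definability of the period map on a fundamental set (via the nilpotent orbit theorem, as recalled in \Cref{lect3}), Pila--Wilkie applied to the lattice elements $\gamma\in\Gamma_\HH$ putting $W$ in atypical position, a geometric input replacing negative curvature (in \cite{MR3958791} a volume bound for Griffiths-transverse subvarieties in Siegel sets) which guarantees enough rational points for the counting theorem, and then a stabilizer/monodromy argument together with an induction organized around weakly special subvarieties. But none of these ingredients is proved or even precisely stated in your proposal --- the definable fundamental set, the counting estimate, the volume bound, and the induction are all black-boxed, and you say as much (``I would either black-box it or sketch the volume/growth estimate''). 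If your intention is to treat the theorem exactly as these notes do, i.e.\ as an external input, then the honest formulation is a citation, not a proof sketch whose key step cites the statement itself; if your intention is to give a proof, the whole of Step~2 remains to be supplied.
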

Notice that $S \times_{ \Gamma_{\HH} \backslash D_{H}} D_H$ is simply the image of the graph of $\tilde{\Phi}:
\widetilde{S^\an} \to D_H$ under $
\widetilde{S^\an} \times D_H\to S \times D_H$.
\begin{rmk}
The intersection $W
  \cap S \times_{\Gamma_\HH \backslash D_H} D_H$ can be identified
  with the intersection in $S \times D_H$ between $W$ and the the
  image of $ \widetilde{S^{\an}}$ in $S \times D_H$ along the map
  $(\pi, \tilde{\Phi})$.
  \end{rmk}
  A special case might be already interesting to observe:
  \begin{cor}[Special case of AS in the period domain, Bakker-Tsimerman]\label{specialAS}
Let $Y\subset D^{\vee}$ be an algebraic subvariety of codimension $\geq \dim S$ and denote by $Y_\Gamma$ the image of $Y \cap D$ in $\Gamma \backslash D$. Each component of $\Phi^{-1}(Y_\Gamma)$ lies in a strict weakly special subvariety of $S$.
\end{cor}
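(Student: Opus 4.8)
The plan is to derive \Cref{specialAS} from the Ax--Schanuel theorem \Cref{astheorem} by choosing the algebraic subvariety $W$ there appropriately and then doing the dimension bookkeeping. Since the lift $\tilde{\Phi}\colon \widetilde{S^\an}\to D_H$ takes values in $D_H$, a point $s\in S$ lies in $\Phi^{-1}(Y_\Gamma)$ if and only if some lift $\tilde s$ satisfies $\tilde{\Phi}(\tilde s)\in Y\cap D_H$; hence only the intersection $Y\cap D_H^\vee$ is relevant, and after replacing $Y$ by a suitable irreducible component of $Y\cap D_H^\vee$ I may assume $Y\subset D_H^\vee$ is irreducible with $\codim_{D_H^\vee}Y\geq \dim S$. (Keeping track of the right component and of the original hypothesis $\codim_{D^\vee}Y\geq \dim S$ at this step requires a little care, since restricting to the closed subvariety $D_H^\vee\subset D^\vee$ can a priori drop codimensions.) Set $W:=S\times Y\subset S\times D_H^\vee$; this is an irreducible algebraic subvariety of $S\times D_H$ in the sense of \Cref{lect3}, with $\codim_{S\times D_H}W=\codim_{D_H^\vee}Y\geq \dim S$.

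Next I would record the two relevant dimension facts. Write $\Delta:=S\times_{\Gamma_\HH\backslash D_H}D_H$ for the image of the graph of $\tilde{\Phi}$ in $S\times D_H$, and $p\colon S\times D_H\to S$ for the first projection. Then $p$ restricts on $\Delta$ to the pullback of the covering $D_H\to\Gamma_\HH\backslash D_H$ along $\Phi$, hence to a covering map onto $S$; in particular $\dim\Delta=\dim S$, so $\codim_{S\times D_H}\Delta=\dim D_H$, and for every analytic subvariety $U\subset\Delta$ one has $\dim U=\dim p(U)$. Moreover every irreducible component $V$ of $\Phi^{-1}(Y_\Gamma)$ arises as $V=p(U)$ for some irreducible component $U$ of $W\cap\Delta$: lift $V$ to a component $\tilde V$ of $\tilde{\Phi}^{-1}(Y\cap D_H)$, take for $U$ the component of $W\cap\Delta$ containing the graph of $\tilde{\Phi}|_{\tilde V}$, and observe that $p(U)$ is then an irreducible subvariety of $\Phi^{-1}(Y_\Gamma)$ containing $V$, hence equal to $V$; combined with the previous point, $\dim U=\dim V$.

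It then suffices to check the Ax--Schanuel inequality for such a $U$ and apply \Cref{astheorem}. Substituting the codimensions computed above, the required inequality
\[
\codim_{S\times D_H}U<\codim_{S\times D_H}W+\codim_{S\times D_H}\Delta
\]
becomes $\dim S+\dim D_H-\dim V<\codim_{D_H^\vee}Y+\dim D_H$, i.e.\ $\dim V>\dim S-\codim_{D_H^\vee}Y$, whose right-hand side is $\leq 0$ by the hypothesis $\codim_{D_H^\vee}Y\geq\dim S$. Hence the inequality holds for every positive-dimensional component $V$, and \Cref{astheorem} yields that $p(U)=V$ lies in a strict weakly special subvariety of $S$, which is the claim.

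I expect the substance of the argument to lie entirely in the translation, not in any new geometry: \Cref{astheorem} does all the work, and the only things to get right are that $p|_\Delta$ is \'etale (so one may pass freely between $U$ and $V=p(U)$ without changing dimensions) and the comparison of $\codim Y$ in $D^\vee$, resp.\ $D_H^\vee$, with the dimensions inside $S\times D_H$. The genuinely delicate case is that of $0$-dimensional (isolated) components of $\Phi^{-1}(Y_\Gamma)$: for these the displayed inequality needs $\codim_{D_H^\vee}Y>\dim S$ to be strict, so they are not directly covered when $\codim Y=\dim S$ exactly. This is harmless for the intended applications, which concern the positive-dimensional part of the Hodge locus; alternatively one notes that an isolated component not contained in any positive-dimensional weakly special subvariety would be a transverse intersection point of $\tilde{\Phi}(\widetilde{S^\an})$ with $Y$, of the expected dimension, hence not something Ax--Schanuel is meant to address.
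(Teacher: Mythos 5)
Your derivation is the intended one: the paper gives no separate argument for \Cref{specialAS}, presenting it as the direct specialization of \Cref{astheorem} to $W=S\times Y$, and your bookkeeping is right ($\codim_{S\times D_H}\Delta=\dim D_H$ since the graph projects as a covering onto $S$, so the Ax--Schanuel inequality for a component $U$ over $V=p(U)$ reduces to $\dim V>\dim S-\codim Y$). Of the two points you leave open, the first can simply be closed rather than deferred to ``intended applications'': an isolated component of $\Phi^{-1}(Y_\Gamma)$ is a point, and any point of $S$ lies in an irreducible component of the fibre of $\Phi$ through it; such fibre components are weakly special (they have trivial algebraic monodromy, and they are maximal with this property because trivial monodromy forces, via the theorem of the fixed part, the period map to be constant on any subvariety containing them), and they are strict as soon as $\Phi$ is nonconstant --- the only case of interest, since for constant $\Phi$ the statement is anyway degenerate. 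So the boundary case $\dim V=0$, $\codim Y=\dim S$ needs no appeal to Ax--Schanuel at all.

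Your second worry --- that replacing $Y$ by a component of $Y\cap D_H^\vee$ can drop the codimension --- is not a flaw of your argument but a genuine issue with the statement as soon as the algebraic monodromy group $\HH$ is strictly smaller than $\G^{\der}$. For instance, if $\VV$ has a constant direct factor whose Mumford--Tate domain has dimension $\geq\dim S$, one may take $Y\supset D_H^\vee$ with $\codim_{D^\vee}Y\geq\dim S$; then $Y\cap D$ contains the image of $\tilde\Phi$, so $\Phi^{-1}(Y_\Gamma)=S$, which lies in no strict weakly special subvariety. So the corollary should be read with $D=D_H$ (equivalently $\HH=\G^{\der}$), or with the codimension hypothesis imposed in $D_H^\vee$; under either reading your reduction step becomes vacuous and the rest of your proof goes through as written.
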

In the conclusion, in particular, we observe that $\Phi^{-1}(Y_\Gamma)$ is not Zariski dense in $S$.

\subsection{Proof of the geometric Zilber-Pink conjecture}
All finiteness results described in the lectures come from the following: the only countable and definable sets (in some o-minimal structure over $\R$), are finite sets. Behind the proof \Cref{geometricZP} there are two basic observations:
\begin{enumerate}
\item Ax-Schanuel \Cref{astheorem} gives the countability;
\item The theory of degenerations of VHS gives the definability. 
\end{enumerate}
\begin{rmk}
Different approaches can be found in \cite{binyamini2021effective, Daw19, 2024arXiv240616628B}. They still use (1) from above, but not (2). Nevertheless the key idea is similar and boils down to the fact that a constructible set which is a countable union of algebraic subvarieties can be expressed as a finite union.
\end{rmk}
\subsubsection{Definable fundamental sets, in Hodge theory}
Let $\Phi: S  \to \Gamma_\HH \backslash D_H$ be the period map for $\VV$. As recalled,
the Mumford-Tate domain $D_H=H(\R)^+/M$ is a real semi-algebraic open subset of
its compact dual $D_H^\vee$. Recall also that $S$  it is naturally definable in any extension of
the o-minimal structure $\R_{\operatorname{alg}}$, since it is an algebraic variety. {\it From now on,
definable will be always understood in the o-minimal structure
$\R_{\an, \exp}$.}

Let us introduce a notion of ``definable fundamental set''  of $S$ for $\Phi$, arguing as at the beginning of
\cite[Section 3]{MR3958791}. Let $(\overline{S},E)$ be a log-smooth
compactification of $S$, and choose a definable atlas of $S$ by
finitely many polydisks $\Delta^k \times (\Delta ^*)^\ell$. Let 
\begin{displaymath}
\exp : \Delta^k \times \Hh^\ell \to \Delta^k \times (\Delta ^*)^\ell
\end{displaymath}
be the standard universal cover, and choose 
\begin{displaymath}
\Sigma = [-b,b] \times
[1, + \infty[
\subset \Hh
\end{displaymath}
such that $\Delta^k \times \Sigma ^\ell$ is a fundamental
set for the $\ZZ^{\ell}$-action by covering transformation. Let $\F$ be the disjoint union of
$\Delta^k \times \Sigma^\ell $ over all charts and choose lifts
$\Delta^k \times \Hh^\ell  \to D$ of the period map restricted to each
chart to obtain a lift $\tilde{\Phi}=\tilde{\Phi}_\F : \F
\to D_H$. From the Nilpotent Orbit Theorem \cite[(4.12)]{sch73} we
obtain the following diagram in the category of definable complex
manifolds: 

\begin{center}
\begin{tikzpicture}[scale=2]
\node (A) at (-1,1) {$\F$};
\node (B) at (0,1) {$D_H$};
\node (C) at (-1,0) {$S^{\an}$};
\path[->,font=\scriptsize,>=angle 90]
(A) edge node[above]{$\tilde{\Phi}_\F$} (B)
(A) edge node[right]{$\exp$} (C);
\end{tikzpicture}.
\end{center}

\subsection{Main steps of the proof}
Let $\Omega$ be a fixed finite set of representatives for the $H(\R)^+$-conjugacy classes of semisimple algebraic subgroups of $H_\R$ that are without compact factors.

Set:
\begin{displaymath}
\Pi_0:=\{(x,g,M) \in \F \times H(\R)^+\times \Omega : \tilde{\Phi}(x) \in
g\cdot D_M \}
\end{displaymath}
(the last condition means that the image of the Deligne torus via $\tilde{\Phi}(x)$ lies in $gG_M g^{-1}$). We remark that all Hodge data we have to consider, belong to the above definable set.

 Notice that if $(x, g,F) \in \Pi_0$ then $g \cdot M
g^{-1}\tilde{\Phi}(x) = g \cdot D_M$ is an algebraic subvariety of $D$. 

By ``dimension'' we will always mean the complex
dimension (and possibly the local dimension at some point $y$, which we denote by $\dim _y$). Consider the functions:

\begin{displaymath}
d : \Pi_0 \to \RR, \qquad (x,g,M) \mapsto d(x, g,F):=
\dim_{\tilde{\Phi}(x)} (g \cdot D_M ). 
\end{displaymath}

\begin{displaymath}
d_{S} : \Pi_0 \to \RR, \qquad (x,g,M) \mapsto d(x, g,F):=
\dim_{\tilde{\Phi}(x)} \left(g \cdot D_M \cap \tilde{\Phi}(\F)\right). 
\end{displaymath}

To ease the notation, from now on, by $x$ we mean $\tilde{\Phi}(x)$ (and we don't distinguish between groups and their real points).
\begin{defi}\label{defsets}
We define two sets:
\begin{displaymath}
\Pi_1:=\{(x,g,M) \in \Pi_0 : \forall (x,g_1,M_1)\in \Pi_0 : gMg^{-1}x \varsubsetneq g_1 M_1 g_1^{-1}x \Rightarrow
\end{displaymath} 
\begin{displaymath}
d(x,g,M)-d_S(x,g,M)<d(x,g_1,M_1)-d_S(x,g_1,M_1) \}
\end{displaymath}
and
\begin{displaymath}
\Pi_2:=\{(x,g,M) \in \Pi_1 : \forall (x,g_2,M_2)\in \Pi_0 : g_2 M_2g_2^{-1}x \varsubsetneq  gMg^{-1}x \Rightarrow
\end{displaymath} 
\begin{displaymath}
d_S(x_2,g_2,M_2) < d_S(x,g,M)\}
\end{displaymath}
\end{defi}

The first step towards \Cref{geometricZP}, is the following
\begin{theor}\label{thm00}
The set $\{gMg^{-1}: (x,g,M)\in \Pi_2\}$ is finite.
\end{theor}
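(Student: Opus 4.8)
The plan is to show that the assignment $(x,g,M)\mapsto gMg^{-1}$, restricted to $\Pi_2$, has finite image by combining the two basic ingredients highlighted in the text: definability (from the Nilpotent Orbit Theorem and the resulting definable diagram $\F\xrightarrow{\tilde\Phi}D_H$, $\F\xrightarrow{\exp}S^{\an}$) and countability (from Ax-Schanuel, \Cref{astheorem}). Concretely, I would first observe that $\Pi_0\subset \F\times H(\R)^+\times\Omega$ is a \emph{definable} set in $\R_{\an,\exp}$: the condition $\tilde\Phi(x)\in g\cdot D_M$ is definable because $\tilde\Phi_\F$ is definable on the fundamental set $\F$, $D_M$ is a real semi-algebraic subset of the compact dual $D_M^\vee\subset D_H^\vee$, the $H(\R)^+$-action is algebraic on $D_H^\vee$, and $\Omega$ is finite. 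The functions $d$ and $d_S$ are definable: $d$ because fibre dimension of a definable family is a definable function of the parameter, and $d_S$ because $\tilde\Phi(\F)$ is a definable set (image of the definable map $\tilde\Phi_\F$) and local dimension of the intersection of two definable sets is definable. Hence $\Pi_1$ and $\Pi_2$, cut out by first-order conditions involving $d$, $d_S$ and the (definable) containment relations $gMg^{-1}x\subsetneq g_1M_1g_1^{-1}x$ among the algebraic subvarieties $g\cdot D_M$ of $D$, are definable. It then suffices to show the image $\{gMg^{-1}:(x,g,M)\in\Pi_2\}$ is \emph{countable}, since a countable definable subset of a real analytic (here, real algebraic) variety — the conjugation orbit sits inside the definable variety of algebraic subgroups of $H_\R$ of the relevant types — is finite.

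For the countability, here is where Ax-Schanuel enters. Fix $(x,g,M)\in\Pi_2$ and let $Q:=gMg^{-1}$. The point is that the orbit $Q\cdot x=g\cdot D_M$ is an algebraic subvariety of $D$ (as already noted in the excerpt), and the defining property of $\Pi_1$ forces $Q\cdot x\cap \tilde\Phi(\F)$ to be \emph{atypical} in the sense of dimensions: the inequality $d(x,g,M)-d_S(x,g,M)<d(x,g_1,M_1)-d_S(x,g_1,M_1)$ for all strictly larger $Q_1\cdot x$, together with the fact that the full orbit $D_H$ itself is such a $Q_1\cdot x$ (taking $M_1=H_\R$), yields precisely that the codimension of a component $U$ of $W\cap (S\times_{\Gamma_\HH\backslash D_H}D_H)$ — where $W$ is the pullback of $Q\cdot x=g\cdot D_M$ to $S\times D_H$ — drops below the sum of codimensions. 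Applying \Cref{astheorem}, the projection of $U$ to $S$ lies in a strict weakly special subvariety $S'\subsetneq S$, whose algebraic monodromy group is a proper subgroup. So every $(x,g,M)\in\Pi_2$ produces a weakly special subvariety, and conversely $Q=gMg^{-1}$ is recovered (up to the finite ambiguity in $\Omega$, and conjugation inside the relevant normaliser) from the Mumford–Tate data of that weakly special subvariety together with the finitely many arithmetic choices; running the argument on $S'$ by induction on $\dim S$, one sees that only countably many weakly special subvarieties arise, hence only countably many $Q$. Combined with definability this gives finiteness.

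The main obstacle — and the step I would spend the most care on — is the translation between the dimension-theoretic conditions defining $\Pi_1$ and $\Pi_2$ and the precise atypicality inequality in the hypothesis of \Cref{astheorem}, i.e.\ checking that membership in $\Pi_2$ really forces the Ax-Schanuel inequality $\codim U<\codim W+\codim(S\times_{\Gamma_\HH\backslash D_H}D_H)$ for the component $U$ through the chosen point, rather than merely for \emph{some} component. Here one must argue locally at $\tilde\Phi(x)$ — this is why $d$ and $d_S$ are taken as \emph{local} dimensions $\dim_{\tilde\Phi(x)}$ — and the minimality built into $\Pi_1$ (choosing $Q\cdot x$ as small as possible among those with a given ``defect'' $d-d_S$) is exactly what guarantees the relevant component is atypical and not accidentally typical. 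A secondary subtlety is ensuring the conjugates $gMg^{-1}$ themselves, and not just the associated weakly special subvarieties, form a countable (then finite, by definability) set; this is handled because two elements of $\Pi_2$ giving the same weakly special subvariety $S'$ have $Q\cdot x$ determined by the generic Mumford–Tate subdatum of $S'$ up to the action of the (arithmetic, hence countable) group $\Gamma$ and the finite set $\Omega$, so the fibres of $(x,g,M)\mapsto S'$ contribute only countably many values of $gMg^{-1}$. Once these two points are pinned down, the conclusion follows formally from ``countable $+$ definable $\Rightarrow$ finite''.
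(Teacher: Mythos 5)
Your overall scaffolding is the paper's: definability of $\Pi_0,\Pi_1,\Pi_2$ and of $d,d_S$ in $\R_{\an,\exp}$, plus ``countable and definable implies finite''. Your single application of Ax--Schanuel is also essentially right: testing the $\Pi_1$-condition against the full orbit $D_H$ gives the atypicality inequality for the component through $\tilde{\Phi}(x)$, and \Cref{astheorem} then places its projection inside a strict weakly special subvariety $S'\subsetneq S$. But the heart of the argument --- the paper's intermediate statement \Cref{pro110}, that for $(x,g,M)\in\Pi_2$ the orbit $gMg^{-1}\cdot x$ \emph{equals} the weakly special subvariety $D'$ of $D$ attached to the minimal such $S'$ --- is precisely what you assert when you say $Q=gMg^{-1}$ is ``recovered from the Mumford--Tate data of that weakly special subvariety up to countable ambiguity'', and you never prove it. Knowing only that the projection of $U$ lies in $S'$ leaves, a priori, uncountably many orbits $gMg^{-1}x$ (hence groups $gMg^{-1}$) compatible with the same $S'$, so your countability step does not follow. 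Establishing the recovery statement is where the minimality conditions of \Cref{defsets} are actually consumed and where a second atypicality argument is needed: one takes a component $Y$ of $D'\cap gMg^{-1}x$ through the point, notes it corresponds to a triple $(x,g_Y,M_Y)\in\Pi_0$ with $d_S(x,g_Y,M_Y)=d_S(x,g,M)$, and the second condition defining $\Pi_2$ forces $g_YM_Yg_Y^{-1}x=gMg^{-1}x\subset D'$; then the inequality $\dim D'-\dim(D'\cap\tilde{S})\leq d(x,g,M)-d_S(x,g,M)$ (proved by showing that otherwise $S'\times gMg^{-1}x\subset S'\times D'$ would be an atypical intersection contradicting the minimality of $S'$) combined with the $\Pi_1$-condition applied to a triple representing $D'$ yields $gMg^{-1}x=D'$. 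None of this is in your write-up, and the phrase ``running the argument on $S'$ by induction on $\dim S$'' does not substitute for it.

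A secondary inaccuracy: the set of weakly special subvarieties of $S$ is in general uncountable, since they occur in positive-dimensional families (compare case (b) of \Cref{geometricZP}), so ``only countably many weakly special subvarieties arise'' is not available as stated. What the deduction of \Cref{thm00} actually uses (cf.\ \Cref{ex38}) is that weakly specials come in countably many \emph{families}, with the associated group a fixed factor of a $\Q$-algebraic monodromy group within each family, so that the possible values of $gMg^{-1}$ are countable --- but this conclusion is only licensed once you know $gMg^{-1}\cdot x$ is the weakly special $D'$ itself, i.e.\ once \Cref{pro110} is in place. The dimension bookkeeping you flag (local dimensions at $\tilde{\Phi}(x)$, fibers of $\Phi$ versus period dimensions) is a genuine but minor point to be handled carefully; the missing proof of the recovery statement is the substantive gap.
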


In the direction of the above theorem, we simply prove the following, using several times Ax-Schanuel (namely \Cref{astheorem}). The rest of the deduction will be spread out in the exercises (cf. 5.3.6-5.3.8).

\begin{prop}\label{pro110}
If $(x,g,M)\in \Pi_2$, then $gMg^{-1} \cdot x$ is a weakly special subvariety of $D$.
\end{prop}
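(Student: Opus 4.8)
The plan is to show that $g M g^{-1}\cdot x$ is \emph{weakly special} by exhibiting it as the projection to $D_H$ (or rather to $S$, via $\exp$) of a component of an intersection forced to be degenerate, and then invoking Ax--Schanuel (\Cref{astheorem}) to conclude the projection lies in a weakly special subvariety of the expected dimension. Concretely, fix $(x,g,M)\in\Pi_2$ and set $N:=gMg^{-1}$, so that $N\cdot x=g\cdot D_M$ is an irreducible algebraic subvariety of $D$ (as noted right after the definition of $\Pi_0$). The first step is to translate the conditions $(x,g,M)\in\Pi_1$ and $(x,g,M)\in\Pi_2$ into geometric statements about the intersection of the algebraic variety $N\cdot x$ with the image $\tilde\Phi(\F)$ of the period map: $d(x,g,M)$ is the dimension of $N\cdot x$ near $x$, $d_S(x,g,M)$ is the dimension of $N\cdot x\cap\tilde\Phi(\F)$ near $x$, and the defect $\delta:=d(x,g,M)-d_S(x,g,M)$ is being \emph{minimized} (over all larger Hodge-data orbits through $x$, by the $\Pi_1$ condition) while $d_S$ is \emph{maximized} (over all smaller orbits through $x$, by the $\Pi_2$ condition).

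The second step is the application of Ax--Schanuel. Consider the algebraic subvariety $W:=S\times (N\cdot x)^{\vee}$ of $S\times D_H^\vee$ (taking the Zariski closure of $N\cdot x$ in $D_H^\vee$), or more precisely work with the component of $W\cap (S\times D_H)$ through the relevant point. An irreducible analytic component $U$ of $W\cap\big(S\times_{\Gamma_\HH\backslash D_H}D_H\big)$ through $(x,\tilde\Phi(x))$ projects, under $S\times D_H\to D_H$, onto (a neighbourhood in) $N\cdot x\cap\tilde\Phi(\F)$, and its dimension is exactly $d_S(x,g,M)$. If this component were \emph{atypical} in the sense of \Cref{astheorem} --- i.e. if $\codim U<\codim W+\codim(S\times_{\Gamma_\HH\backslash D_H}D_H)$, equivalently $d_S<\dim S-\delta$ after bookkeeping --- then Ax--Schanuel forces the projection of $U$ to $S$ into a strict weakly special subvariety $T\subsetneq S$. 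The key point is then to argue that this weakly special $T$, pulled back up to $D_H$, gives rise to a Hodge datum $(x,g_1,M_1)\in\Pi_0$ with $N\cdot x\subsetneq N_1\cdot x$ and with strictly smaller defect, contradicting minimality of $\delta$ coming from the $\Pi_1$ condition --- unless $N\cdot x$ is \emph{already} weakly special. In the typical (non-atypical) case, one instead uses that a typical intersection of $N\cdot x$ with $\tilde\Phi(\F)$ is cut out, near $x$, by the orbit of the monodromy/algebraic-monodromy group, and the $\Pi_2$ maximality of $d_S$ pins down $N\cdot x$ as the weakly special subvariety generated by that monodromy. Either way one lands inside the class of weakly special subvarieties.

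The main obstacle I expect is the careful bookkeeping that converts the two numerical conditions defining $\Pi_1$ and $\Pi_2$ into exactly the codimension inequality that triggers Ax--Schanuel, and then \emph{back} from the conclusion of Ax--Schanuel (a weakly special subvariety of $S$) to a new point of $\Pi_0$ contradicting minimality. In particular one must be careful about: (i) working locally near $x$ with local dimensions $\dim_y$, so that "component through $x$" is unambiguous; (ii) the fact that $\tilde\Phi(\F)$ is only a fundamental \emph{set}, not the whole image, so one should pass to the genuine period-map graph $S\times_{\Gamma_\HH\backslash D_H}D_H$ when invoking \Cref{astheorem} and check the relevant components match up; and (iii) ensuring that the weakly special subvariety produced by Ax--Schanuel indeed corresponds to a semisimple subgroup conjugate into our fixed finite set $\Omega$ of representatives, so that it yields a genuine element of $\Pi_0$. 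Once these compatibilities are set up, the argument is a finite descent: any failure of $N\cdot x$ to be weakly special produces, via Ax--Schanuel, a strictly larger orbit through $x$ with strictly smaller defect, and this cannot continue indefinitely. I would also isolate as a preliminary lemma the statement that, for $(x,g,M)\in\Pi_0$, the orbit $g\cdot D_M$ meets $\tilde\Phi(\F)$ \emph{typically} (defect zero) precisely when $g M g^{-1}\cdot x$ is weakly special --- this is essentially the content of \Cref{specialAS} read in reverse --- and then the whole proposition reduces to showing the $\Pi_2$ conditions force defect zero.
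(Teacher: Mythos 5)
Your overall strategy (apply Ax--Schanuel to the orbit $gMg^{-1}\cdot x$ intersected with the period image and play the extremality conditions defining $\Pi_1,\Pi_2$ against the weakly special subvariety that Ax--Schanuel produces) is the right one and is the same spirit as the paper's argument, but two of your key steps have genuine gaps. First, your proposed preliminary lemma --- that for $(x,g,M)\in\Pi_0$ the orbit $g\cdot D_M$ meets $\tilde\Phi(\F)$ with defect zero precisely when $gMg^{-1}\cdot x$ is weakly special, so that the proposition reduces to showing $\Pi_2$ forces defect zero --- is false, and it is not what \Cref{specialAS} says when ``read in reverse''. A weakly special orbit $D'$ has defect $\dim D'-\dim_x(D'\cap\tilde S)=\dim D'-\dim\Phi(S'^{\an})$, which is typically positive (already for $D'=D_H$ and $S'=S$ the defect is $\dim D_H-\dim\Phi(S^{\an})$); conversely defect zero would mean the orbit is locally contained in the period image, which is far stronger than being weakly special. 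Elements of $\Pi_2$ with positive defect must be handled, so the claimed reduction cannot be the skeleton of the proof.

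Second, in your ``atypical case'' you assert that the strict weakly special subvariety $T$ produced by Ax--Schanuel yields a triple $(x,g_1,M_1)\in\Pi_0$ with $N\cdot x\subsetneq N_1\cdot x$. This containment does not follow: the orbit $D'$ attached to the weakly special closure of the projection of your component only contains that component (the analytic piece $A$ of $N\cdot x\cap\tilde S$ through $x$), and a priori $D'$ and $N\cdot x$ are incomparable. This is exactly the point where the paper spends the $\Pi_2$ condition: one intersects $D'\cap N\cdot x$ in the compact dual, takes the component $Y\supset A$, observes $d_S(x,g_Y,M_Y)=d_S(x,g,M)$, and $\Pi_2$ then forces $g_YM_Yg_Y^{-1}\cdot x=N\cdot x$, whence $N\cdot x\subset D'$. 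Your proposal instead spends $\Pi_2$ on a vague ``typical case'' claim that the intersection is cut out by a monodromy orbit, which is not a statement you can invoke. Finally, the comparison of defects needs an argument, not just ``strictly smaller defect'': the paper proves the non-strict inequality \eqref{eq00} (defect of $D'$ at most the defect of $N\cdot x$) by a second appeal to Ax--Schanuel inside $S'\times D'$, using minimality of $S'$, and only then does the $\Pi_1$ condition (defects strictly increase along strict inclusions of orbits) force $N\cdot x=D'$, hence weakly special; no descent is needed. Your bookkeeping concern (ii) about working with the graph $S\times_{\Gamma_\HH\backslash D_H}D_H$ rather than $\tilde\Phi(\F)$ is legitimate but minor compared with these two missing steps.
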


\begin{proof}
Let $A$ be an irreducible analytic component of $\tilde{S}\cap gMg^{-1}x$ at $x$, such that $\dim A= d_S(x,g,M)$. By \Cref{astheorem} the $\pi (A)$ lies in some strict weakly special subvariety. Let $S'\subset S$ be the smallest with this property, and $(H',D')$ its monodromy datum. So $A \subset D'$, and, by construction, $A \subset gMg^{-1}x$. Consider the intersection between Zariski closed subvarieties in the compact dual $D^\vee$
\begin{displaymath}
D'\cap gMg^{-1}x
\end{displaymath}
and let $Y$ be a component containing $A$. By construction $Y$ is associated to a triple $(x,g_Y,M_Y)\in \Pi_0$ (and clearly $g_Y M_Yg_Y^{-1} \cdot x \subset gMg^{-1} \cdot x$)\footnote{Here there's a hidden claim: the components of intersections of two orbits under semisimple subgroups are again orbits of some subgroup.}. We observe that
\begin{equation}
d_S(x,g,M)=\dim A = d_S(x,g_Y,M_Y)
\end{equation}
By the fact that $(x,g,M)$ belongs to $\Pi_2$, we must have:
\begin{displaymath}
g_Y M_Yg_Y^{-1}\cdot x= g M g^{-1}\cdot x
\end{displaymath}
Since, by construction, $g_Y M_Yg_Y^{-1}\cdot x \subset D'$, we have learnt (essentially from AS) that $g M g^{-1}\cdot x\subset D'$.

We claim that we also have
\begin{equation}\label{eq00}
\dim D' - \dim D' \cap \tilde{S}  \leq d(x,g,M)-\dim A= d(x,g,M) - d_S(x,g,M).
\end{equation}

Heading for a contradiction, suppose $\dim D' - \dim D' \cap \tilde{S}  > d(x,g,M)-\dim A= d(x,g,M) - d_S(x,g,M).$ Observe that $\dim D' \cap \tilde{S}=\dim S'$. The $>$ implies that $S'\times g M g^{-1}\cdot x \subset S' \times D'$ is an atypical intersection, but this would contradict the minimality of $S'$.

Since $\dim D' - \dim D' \cap \tilde{S} = d(x,g_1,M_1)-d_S(x,g_1,M_1)$ (for some $(x,g_1,M_1)\in \Pi_0$ associated to $D'$), the fact that  $(x,g,M)\in \Pi_1$ implies that 
\begin{displaymath}
g Mg^{-1}x = D'(=g_1M_1g_1^{-1}x)
\end{displaymath}
as desired.
\end{proof}

\begin{rmk}
The proof presented above is closer in spirit to \cite{MR3867286} rather than \cite{2021arXiv210708838B} since it deals with all possible sources of special subvarieties at once, rather than giving a complicated induction/algorithm to run trough the proof. This is also closer to what happens in the most general case discussed in \cite{2024arXiv240616628B} (where several of the above ideas are organised around a statement of Ax-Schanuel type \emph{in families}). The very first appearance of these ideas in the setting of VHS, beyond Shimura varieties, can be found in \cite[Sec. 6]{BU}.
\end{rmk}

\subsection{Stronger versions of Ax-Schanuel}
We conclude by presenting two recent results that imply \Cref{astheorem}.

We start with the one in the so called \emph{period torsor}. We fix a base point $s_0\in S$. Let $\VV_0$ be the trivial variation whose fiber is the fiber of $\VV$ over $s_0$. Consider the variation $\mathbb{E} := \Hom(\VV,\VV_0)$, its underlying algebraic flat vector bundle with total space $\mathcal{E}$, and let $\mathbb{I}\subset \mathcal{E}$ be the open set of isomorphisms of the fibers (as vector spaces) in the geometric total space, which is naturally a $\GL(\VV_{\C,0})$-torsor over $S$ by post-composition. 

Let $S'$ be the minimal covering space of $S(\VV)$ which trivializes the local system associated to the VHS. Solving the connection naturally gives a flat section $\sigma_{\VV} : S' \to \mathbb{I}$ by sending a path to its flat transport operator. We denote the image we denote by $\Sigma_\VV$ which we think of as a flat leaf. It turns out that its Zariski closure $\Omega_\VV:=\overline{\Sigma_\VV}^{Zar} $ is the full monodormy orbit of $\Sigma_\VV$, and is therefore naturally a $G=\mathbf{G}(\C)$-torsor which we call the period torsor. (Here $ \mathbf{G}$ is the monodromy of $(S,\VV)$, previously denoted by $\mathbf{H}$).
\begin{theor}[{\cite[Thm. 1.3]{2022arXiv220805182B}}]\label{asperiod}
Suppose $W\subset \Omega_{\VV}$ is an algebraic subvariety and $U$ a component of
$W\cap \Sigma_\VV$ such that
\begin{displaymath}
\codim_W U < \dim G.
\end{displaymath}
Then the projection of $U$ to $S$ is contained in a strict weakly special subvariety of $S$.
\end{theor}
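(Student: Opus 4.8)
The plan is to deduce \Cref{asperiod} from the already-established Ax--Schanuel for VHS (\Cref{astheorem}) by unwinding the relationship between the period torsor $\Omega_\VV$ and the Mumford--Tate domain picture. The point is that $\Omega_\VV$ is a $G=\mathbf{G}(\C)$-torsor over $S$, and the flat leaf $\Sigma_\VV$ is, roughly, the graph of the (multivalued) period map after trivializing the local system; so an intersection statement in $\Omega_\VV$ should translate, fiberwise over the compact dual, into an intersection statement in $S\times D_H$. First I would set up the comparison: choosing the base point $s_0$ identifies a fiber of $\mathbb{I}$ with $\GL(\VV_{\C,0})$, and the period domain $D_H$ sits inside the flag variety $D_H^\vee$, which is itself a quotient of (an open subset of) $\mathbf{G}(\C)$ by a parabolic-type stabilizer; so there is a natural $G$-equivariant projection $p\colon \Omega_\VV \to S\times D_H^\vee$ landing, over $\Sigma_\VV$, in $S\times_{\Gamma_\HH\backslash D_H} D_H$. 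The section $\sigma_\VV\colon S'\to\mathbb{I}$ composed with $p$ recovers $(\pi,\tilde\Phi)$ up to the covering $S'\to S$.

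The core of the argument is a dimension bookkeeping. Given $W\subset\Omega_\VV$ algebraic and $U$ a component of $W\cap\Sigma_\VV$ with $\codim_W U < \dim G$, I would push forward: let $W' = \overline{p(W)}^{\,\Zar}\subset S\times D_H^\vee$ and let $U'$ be the component of $W'\cap (S\times_{\Gamma_\HH\backslash D_H}D_H)$ containing $p(U)$. Because $\Omega_\VV\to S\times D_H^\vee$ has fibers of dimension $\dim G - \dim D_H$ (the stabilizer) and $\Sigma_\VV$ maps to the graph fiber-transversally in the torsor direction, one gets $\codim_{S\times D_H}U' \leq \codim_{S\times D_H^\vee}W' $ roughly matching $\codim_{\Omega_\VV}W$ after subtracting the common fiber dimension, and the hypothesis $\codim_W U<\dim G$ is exactly calibrated so that $U'$ is an \emph{atypical} component in the sense of \Cref{astheorem}: $\codim_{S\times D_H}U' < \codim_{S\times D_H}W' + \codim_{S\times D_H}(S\times_{\Gamma_\HH\backslash D_H}D_H)$. (Here $\codim(S\times_{\Gamma_\HH\backslash D_H}D_H) = \dim D_H$.) Then \Cref{astheorem} gives that the projection of $U'$ to $S$ lies in a strict weakly special subvariety; since the projections of $U$ and of $U'$ to $S$ coincide (up to the étale cover $S'\to S$, which does not affect weak specialness by its definition via the algebraic monodromy group), the conclusion follows.

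The step I expect to be the main obstacle is the dimension comparison in the torsor direction — i.e. making precise that the $\dim G$ in the hypothesis of \Cref{asperiod} decomposes correctly as $(\dim G - \dim D_H) + \dim D_H$, with the first summand absorbed by the torsor fibers of $p$ and the second being the codimension of the graph $S\times_{\Gamma_\HH\backslash D_H}D_H$ inside $S\times D_H$. One has to be careful that $\Sigma_\VV$ is \emph{not} a full preimage under $p$: it is a single flat leaf, so a component $U$ of $W\cap\Sigma_\VV$ can be much smaller than $p^{-1}(U')\cap W$, and one must check the inequality goes the right way — essentially that passing from $\Omega_\VV$ down to $S\times D_H^\vee$ can only \emph{decrease} codimension of the relevant components by at most $\dim G - \dim D_H$, and that $W$ can be replaced by its image without losing atypicality (this is where one may need to intersect $W$ with $G$-translates or use that $\Omega_\VV$ is a single $G$-orbit). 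A clean way to handle this is to note $\Omega_\VV \cong \Sigma_\VV \times_{?}$ nothing—rather, to work $G$-equivariantly throughout and reduce to the statement on $D_H$ itself, which is precisely what \Cref{specialAS} and \Cref{astheorem} are designed for. The rest is formal.
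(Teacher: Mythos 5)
There is a genuine gap, and it is exactly at the step you flag as ``the main obstacle'': the dimension bookkeeping after pushing forward to $S\times D_H^\vee$ does not close, and in fact cannot, because the logical direction of your argument is reversed. \Cref{asperiod} is strictly \emph{stronger} than \Cref{astheorem} (in these notes it is presented as one of the results that \emph{imply} \Cref{astheorem}, cf.\ Exercise 3.3), so no pushforward to the flag variety can recover it from \Cref{astheorem}. Concretely: your hypothesis gives $\dim W-\dim U<\dim G$, and since $p|_{\Sigma_\VV}$ is a local isomorphism onto the image of $(\pi,\tilde\Phi)$ you get $\dim U'\geq \dim U$ and $\dim W'\leq\dim W$; but to invoke \Cref{astheorem} for $W'$ you need $\codim_{S\times D_H}U'<\codim_{S\times D_H}W'+\dim D_H$, i.e.\ $\dim W'<\dim U'+\dim D_H$, whereas you only control $\dim W'\leq\dim W<\dim U+\dim G\leq \dim U'+\dim G$. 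The discrepancy $\dim G-\dim D_H$ is the dimension of the (positive-dimensional, parabolic-type) stabilizer, so the needed inequality follows only when the fibers of $p|_W$ have dimension at least $\dim G-\dim D_H$, i.e.\ when $W$ is essentially saturated along the fibers of $p$. For such saturated $W$ the two statements are indeed equivalent---that is the easy implication, essentially your computation read backwards---but the whole content of \Cref{asperiod} is for $W$ transverse to the torsor fibers (transcendence of the full flat trivialization, not just of the induced Hodge filtration). For instance, take $W$ generically finite over its image with $\dim W=\dim G$ and $U$ a curve: the hypothesis $\codim_W U<\dim G$ holds, yet $U'$ is a typical-dimensional component of $W'\cap(S\times_{\Gamma_\HH\backslash D_H}D_H)$ and \Cref{astheorem} asserts nothing. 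Saturating $W\rightsquigarrow p^{-1}(\overline{p(W)})$ does not rescue this: it inflates $\dim W$ by up to $\dim G-\dim D_H$ with no matching gain in the intersection with $\Sigma_\VV$, and the hypothesis then becomes precisely the atypicality of $W'$, which is what you were unable to establish.

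For the record, these notes do not prove \Cref{asperiod}; it is quoted from Bakker--Tsimerman, who give two proofs, neither of which deduces it from the statement of \Cref{astheorem}: one redoes the o-minimal definability-plus-point-counting argument directly in the torsor $\Omega_\VV$, the other derives it from the foliated Ax--Schanuel \Cref{thm:newAS}, by viewing $\Omega_\VV$ as a principal $G$-connection whose differential Galois group is $G$ (via the theorem of the fixed part/\Cref{monodromytheorem}), identifying the formal germ of $\Sigma_\VV$ with a horizontal leaf and the $\nabla$-special subvarieties with the weakly special subvarieties of $(S,\VV)$. If you want a proof within the toolkit of these lectures, that second route is the one to pursue; your comparison map $p$ is still useful, but only for the converse implication \Cref{asperiod} $\Rightarrow$ \Cref{astheorem}.
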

Bakker and Tsimerman offer two proofs of the above theorem: one generalizing the proof of \Cref{astheorem} and using o-minimality and another one using the work \cite{2021arXiv210203384B} in differential algebraic geometry. We conclude just by briefly mentioning the approach of Bl\'{a}zquez-Sanz, Casale, Freitag, and Nagloo, which implies all AS statements discussed so far.
\begin{theor}[Foliated Ax-Schanuel, {\cite{2021arXiv210203384B}}]\label{thm:newAS}
Let $G$ be a semisimple complex group, and let $\nabla$ be a $G$-principal connection on $\pi : P \to S$ with Galois group $G$. Let $V$ be a subvariety of $P$, $x\in V$, and let $\mathcal{L} \subset \hat{P}_x$ be a formal horizontal leaf through $x$. Let $W$ be an irreducible component of $V \cap \mathcal{L}$. If
\begin{displaymath}
\dim V < \dim W + \dim G,
\end{displaymath}
then the projection of $W$ in $S$ is contained in a finite union of $\nabla$-special subvarieties.
\end{theor}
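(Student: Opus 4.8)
The plan is to reduce the statement to a purely differential-algebraic transcendence fact about principal connections with semisimple Galois group, exploiting the fact that the "special subvarieties" in the foliated setting are precisely the images of formal leaves of sub-connections obtained from proper algebraic subgroups, together with the fibers of quotient connections. First I would set up the differential Galois formalism: the $G$-principal connection $\nabla$ on $P\to S$ determines, over the function field $K=\C(S)$ equipped with the associated derivations, a $G$-torsor whose Picard–Vessiot/Galois theory governs algebraic relations among solutions. The formal horizontal leaf $\mathcal{L}\subset\hat P_x$ is a formal integral manifold of the foliation defined by $\nabla$; its dimension equals $\dim S$, and $\mathcal{L}$ is Zariski-dense in $P$ precisely when the Galois group is all of $G$ (this is where semisimplicity, hence the absence of a nontrivial character tower, makes the correspondence between algebraic subgroups and sub-connections clean — compare the role of $\mathbf{G}(\C)$-torsor structure in \Cref{asperiod}).

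Next I would run the Ax–Schanuel dimension count. Let $W$ be an irreducible component of $V\cap\mathcal{L}$ with $\dim V<\dim W+\dim G$, i.e. $\codim_P V<\dim G=\codim_P\mathcal{L}$, so $W$ is an \emph{atypical} component of the intersection of $V$ with the leaf. The key mechanism — exactly as in the proof of \Cref{pro110} above — is: atypicality of the intersection forces the Zariski closure of $W$ inside $P$ to be smaller than expected, and one shows this Zariski closure is itself invariant under a \emph{proper} algebraic subgroup $H\subsetneq G$ acting along the fibers, or equivalently that $W$ is constrained to lie in the pullback of a $\nabla$-special subvariety of $S$. Concretely, I would consider the orbit $\overline{W}^{\Zar}$ and use the monodromy/Galois action: since $\mathcal{L}$ is a leaf of a $G$-connection, the group $G$ acts on the ambient leaf-geometry, and the stabilizer of the "direction" cut out by $W$ is a proper algebraic subgroup $H$ — here is the hidden claim (as flagged in the footnote to \Cref{pro110}) that components of intersections of group-orbits are again orbits, which in the differential-algebraic setting becomes a statement about the Galois group of the restricted connection. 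The connection $\nabla$ restricted to $\pi(\overline{W}^{\Zar})$ then has Galois group contained in $H$, so $\pi(W)$ lies in a $\nabla$-special subvariety by definition.

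To make the induction terminate I would argue on $\dim G$ (or on the length of a subgroup chain in $G$): if $\pi(W)=S$ then the dimension inequality together with $\mathcal{L}$ being a leaf forces $V\supset\mathcal{L}$, contradicting $\dim V<\dim W+\dim G\le\dim\mathcal{L}+\dim G$ unless the Galois group has already dropped, in which case we replace $(P,S,G)$ by the restricted data with smaller structure group and repeat. Finiteness of the resulting union of $\nabla$-special subvarieties follows because there are only finitely many conjugacy classes of algebraic subgroups of $G$ up to the relevant equivalence (the analogue of the finite set $\Omega$ in the Hodge-theoretic proof), so only finitely many "types" of sub-connection can occur, and Noetherianity bounds the number of components. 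The main obstacle I expect is precisely the hidden claim: establishing in the formal/differential-algebraic category that an irreducible component of the intersection of a leaf with an algebraic subvariety, when atypical, genuinely "sees" a proper algebraic subgroup of $G$ — i.e. upgrading the naive dimension count into an algebraic-group statement about the differential Galois group. This is where the approach of Bl\'{a}zquez-Sanz–Casale–Freitag–Nagloo genuinely departs from o-minimality: instead of definability one uses the structure theory of the Galois groupoid of the foliation, and the semisimplicity of $G$ is essential to rule out the pathologies (unipotent or toric pieces) that would otherwise break the orbit-intersection dichotomy.
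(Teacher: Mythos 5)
First, a point of comparison: these notes do not actually prove \Cref{thm:newAS}. The theorem is quoted from Bl\'azquez-Sanz--Casale--Freitag--Nagloo \cite{2021arXiv210203384B}, and the text explicitly refers the reader to that work (even for the study of the $\nabla$-special subvarieties themselves), so there is no proof in the paper against which to match yours; I can only judge your sketch against what such a proof must accomplish. Judged that way, it has a genuine gap at its center: the step ``atypicality of $W$ forces $\overline{W}^{\Zar}$ to be invariant under a proper algebraic subgroup $H\subsetneq G$, equivalently $\pi(W)$ lies in a $\nabla$-special subvariety'' is precisely the assertion of the theorem, and you give no mechanism for extracting $H$ from the excess-dimension hypothesis --- you flag this yourself as ``the main obstacle''. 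The analogy with \Cref{pro110} cannot be transplanted: there the Ax--Schanuel theorem (\Cref{astheorem}) is available as an input and the argument only organizes atypical loci, whereas here the functional transcendence statement is the output. In the differential-algebraic approach all the work lies exactly at this point (Galois correspondence for restricted/quotient connections, stabilizer constructions, and an induction in which the hypothesis that $G$ is \emph{sparse} --- which semisimple groups satisfy --- controls the families of invariant subvarieties); semisimplicity is not there to make the formal leaf Zariski dense, which already follows from the Galois-group-equals-$G$ hypothesis and needs no hypothesis on $G$.

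There are also concrete slips that would block the argument as written. The hypothesis $\dim V<\dim W+\dim G$ is not equivalent to $\codim_P V<\dim G$: the former says $W$ exceeds the expected dimension $\dim V-\dim G$, the latter would say $\dim V>\dim S$. The termination step ``if $\pi(W)=S$ then $V\supset\mathcal{L}$, contradicting $\dim V<\dim W+\dim G\le\dim\mathcal{L}+\dim G$'' is no contradiction, since $\dim\mathcal{L}+\dim G=\dim P$ and every proper $V$ satisfies $\dim V<\dim P$. Finally, the concluding finiteness cannot be deduced from ``finitely many conjugacy classes of algebraic subgroups of $G$'': a semisimple group has in general infinitely many such classes (the finite set $\Omega$ in the Hodge-theoretic proof consists only of semisimple subgroups without compact factors, a much more restrictive family), and the text itself warns that in this generality the $\nabla$-special subvarieties are hard to control, so the finite union in the conclusion must emerge from the structure of the induction rather than from a count of subgroup types. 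The one ingredient you state that is sound and standard is that the Zariski closure of the formal horizontal leaf is a torsor under the Galois group; but by itself it does not bridge the gap above.
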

(In fact their statement works over any field of characteristic zero, and $G$ is simply required to be \emph{sparse}). In such a generality, the study of $\nabla$-special subvarieties is not easy, and we refer to the work of Bl\'{a}zquez-Sanz, Casale, Freitag, and Nagloo for more details.

\newpage

\section{Lecture 4: Algebraicity and quasiprojectivity of images of period maps}\label{lect4}
This final lecture is a follow up of the lecture series of B. Bakker \cite{ben}. We discuss the main results from \cite{gaga}.
Definable GAGA was already presented in the lectures of C. Miller at CIRM, but nevertheless, we recall its statement for future reference (see {\cite[Thm. 1.4]{gaga}} as well as \cite[Sec. 3]{ben}).

\begin{theor}[O-minimal GAGA]
Let $S$ be a separated algebraic space of finite type over and $S^\deff$ the associated definable complex analytic space. The \emph{definabilization} functor 
\begin{displaymath}
\operatorname{Coh}(X)\to \operatorname{Coh}(X^{\deff})
\end{displaymath}
 is fully faithful, exact, and its essential image is closed under subobjects and quotients.
\end{theor}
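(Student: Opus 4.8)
The plan is to follow the strategy of \cite{gaga}, isolating the one place where o‑minimality is genuinely needed. Write $X$ for the given algebraic space and $\mathcal{F}\mapsto \mathcal{F}^\deff:=\mathcal{F}\otimes_{\mathcal{O}_X}\mathcal{O}_{X^\deff}$ for definabilization. First I would dispose of the formal and local points. Definabilization commutes with \'etale base change and all three assertions are \'etale‑local on $X$; since a finite‑type separated algebraic space over $\mathbb{C}$ is \'etale‑locally an affine scheme, and since $(-)^\deff$ is moreover compatible with finite pushforward and with closed immersions, a standard d\'evissage reduces everything to the case where $X$ is a smooth quasi‑projective scheme. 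Exactness is then local on $X^\deff$: locally $X^\deff$ is a definable open subset of an affine space, $\mathcal{O}_{X^\deff}\hookrightarrow\mathcal{O}_{X^\an}$ is faithfully flat, and $\mathcal{O}_{X^\an}$ is flat over $\mathcal{O}_X$ by the classical comparison, so $\mathcal{O}_{X^\deff}$ is flat over $\mathcal{O}_X$ and $(-)^\deff$ is exact. The same faithful flatness on stalks makes $H^0(X,\mathcal{F})\to H^0(X^\deff,\mathcal{F}^\deff)$ injective for every coherent $\mathcal{F}$, and --- since the formation of $\mathcal{H}om_{\mathcal{O}_X}(\mathcal{F},\mathcal{G})$ commutes with $(-)^\deff$ by flatness and finite presentation --- this already yields injectivity of $\Hom_X(\mathcal{F},\mathcal{G})\to\Hom_{X^\deff}(\mathcal{F}^\deff,\mathcal{G}^\deff)$.

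For the substantive content I would pass to a compactification $X\hookrightarrow\overline{X}$ with $\overline{X}$ projective and, after a blow‑up that is an isomorphism over $X$, assume $\overline{X}$ is smooth with $\partial X:=\overline{X}\setminus X$ a normal crossing divisor. Since $\overline{X}$ is proper, $\overline{X}^\deff=\overline{X}^\an$ and Serre's GAGA identifies $\operatorname{Coh}(\overline{X})$ with $\operatorname{Coh}(\overline{X}^\deff)$; everything reduces to comparing $\operatorname{Coh}(X^\deff)$ with the restriction of that category along the \emph{definable} open immersion $X^\deff\hookrightarrow\overline{X}^\deff$, which locally at a boundary point is a product of copies of $\Delta^*\hookrightarrow\Delta$ and $\Delta\hookrightarrow\Delta$. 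The key is that o‑minimality in $\Ranexp$ forces every definable coherent datum on the punctured polydisk to have \emph{moderate growth} along $\partial X$, ruling out phenomena like the $\mathcal{O}$‑submodule generated by $e^{1/q}$ that obstruct algebraization in the ordinary analytic category over a non‑proper base. Concretely, I would show that a definable global section $s$ of $\mathcal{F}^\deff$, having moderate growth at $\partial X$, extends --- after twisting $\mathcal{F}$ by a large power of an ample line bundle on $\overline{X}$ and using a Deligne‑type canonical extension across the normal crossing boundary --- to an analytic, hence by GAGA algebraic, section of a coherent extension of $\mathcal{F}$ to $\overline{X}$; restricting back to $X$ shows $s$ is algebraic. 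Together with the injectivity above, this gives full faithfulness.

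It then remains to see the essential image is closed under subobjects and quotients; by exactness it suffices to treat subobjects, so given an algebraic coherent $\mathcal{G}$ on $X$ and a definable coherent subsheaf $\mathcal{F}'\subseteq\mathcal{G}^\deff$ I must produce an algebraic $\mathcal{F}\subseteq\mathcal{G}$ with $\mathcal{F}^\deff=\mathcal{F}'$. I would argue by Noetherian induction on $\operatorname{Supp}(\mathcal{G}^\deff/\mathcal{F}')$ and, after localizing, reduce to the case where $\mathcal{F}'$ is generated by finitely many definable sections of $\mathcal{G}^\deff$. The submodule spanned by such a definable family is itself definable coherent, and its defining data --- the graph of the inclusion viewed in an appropriate Grassmann or Quot‑type bundle over $X$, extended by moderate growth to $\overline{X}$ as in the previous paragraph --- is a closed definable complex‑analytic subvariety, hence algebraic by the Peterzil--Starchenko definable Chow theorem. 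Descending this algebraic subvariety to a subsheaf $\mathcal{F}\subseteq\mathcal{G}$ and using the full faithfulness already proved to check $\mathcal{F}^\deff=\mathcal{F}'$ completes the argument; the statement for quotients follows by applying this to kernels.

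The hard part is the boundary analysis of the middle two paragraphs: one must leverage the tameness of a definable coherent sheaf on the \emph{non‑proper} space $X^\deff$ to algebraize it across $\partial X$, where the ordinary analytic statement genuinely fails. The o‑minimal inputs that make this work are the finiteness and moderate‑growth properties of definable functions in $\Ranexp$, the good local model $\Delta^k\times(\Delta^*)^\ell$ for the definable structure near a normal crossing boundary (the same models appearing above for period maps), and the definable Chow theorem; once these are in place, the proper case is pure classical GAGA.
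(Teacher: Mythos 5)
These lecture notes do not actually prove o-minimal GAGA: the statement is recalled verbatim from \cite[Thm.~1.4]{gaga} (and was treated in C.~Miller's lectures), so your proposal can only be measured against the proof in that reference. At the level of architecture you are on the right track and broadly parallel to Bakker--Brunebarbe--Tsimerman: \'etale/d\'evissage reduction to the quasi-projective case, exactness via flatness on stalks, full faithfulness by compactifying, showing a definable section has no essential singularity along the normal crossing boundary (the correct o-minimal input is precisely the one-variable statement that a definable holomorphic function on $\Delta^*$ extends meromorphically --- ``moderate growth of definable functions in $\Ranexp$'' as a blanket principle is false, and the invocation of a ``Deligne-type canonical extension'' is misplaced: one only needs some algebraic coherent extension of $\mathcal{F}$ to $\overline{X}$ and a twist by the boundary divisor), and then classical GAGA on $\overline{X}$.

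The genuine gap is in the third paragraph, which is the heart of the theorem: closure of the essential image under subobjects. Your reduction ``localize so that $\mathcal{F}'$ is generated by finitely many definable sections'' begs the question --- a definable coherent subsheaf is only finitely generated over a finite cover by \emph{definable} opens, and its local generators are not global sections to which full faithfulness applies; gluing the resulting local algebraic models is exactly the problem. Likewise, the passage through ``the graph of the inclusion in a Grassmann or Quot-type bundle, extended by moderate growth to $\overline{X}$, is a closed definable analytic subvariety, hence algebraic by Peterzil--Starchenko'' is not justified: a coherent subsheaf defines a section of a Grassmannian bundle only over the locus where it is a subbundle, and showing that the relevant closure across the degeneracy locus and across $\partial X$ is a \emph{coherent analytic} (not merely set-theoretic definable) object is precisely the hard technical content. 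In \cite{gaga} this step is an extension theorem: every definable coherent subsheaf of $\overline{\mathcal{G}}^{\deff}|_{X^{\deff}}$ extends uniquely to a coherent analytic subsheaf of $\overline{\mathcal{G}}^{\an}$ on the compactification, proved by induction on dimension using local finite (Noether-normalization-type) covers and the one-variable definability statements; only then does Serre's GAGA on the proper $\overline{X}$ finish the argument. As written, your proposal assumes this extension rather than proving it, so the subobject (and hence quotient) part of the statement is not established.
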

The goal of the final lecture is to explain applications to VHS of such results. A notable example:
\begin{theor}[{\cite[Thm. 1.1]{gaga}}]
Let $S$ be a reduced separated algebraic space of finite type $\Phi: S^{\an}\to \Gamma \backslash D$ a period map. Then
\begin{itemize}
\item $\Phi$ factors (uniquely up to unique isomorphism) as $\Phi=\iota \circ f^{\an}$ where $f: S \to Y$ is a dominant map of (reduced) finite-type algebraic spaces and $\iota : Y^{\an}\to \Gamma \backslash D$ is a closed immersion of analytic spaces;
\item the Griffiths $\Q$-line bundle $L:=\otimes_i \det F^i$ restricted to $Y$ is the analytification of an ample algebraic $\Q$-bundle, and in particular $Y$ is a quasi-projective variety.
\end{itemize}
\end{theor}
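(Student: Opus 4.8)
The plan is to deduce the theorem from o-minimal GAGA together with the definable fundamental set construction already recalled in Lecture~3, in two stages: first the algebraicity of the image, then the ampleness of the Griffiths line bundle.

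\textbf{Step 1: Definability of the period map and Stein factorization.} First I would use the global definability of $\Phi$ (the output of \cite{ben}, via the Nilpotent Orbit Theorem and the definable fundamental set $\F$): $\Gamma\backslash D$ carries a canonical $\R_{\an,\exp}$-definable structure, and $\Phi$ is a definable holomorphic map $S^\deff \to (\Gamma\backslash D)^\deff$. The image $Z:=\Phi(S^\deff)$ is then a definable complex analytic subset, and by definable Chow / the properness of the definabilization of a proper map we may take its closure and analyze it. The key point is that $\Phi$ is proper onto its image in the definable category, so we get a definable Stein-type factorization $S^\deff \to Y^\deff \xrightarrow{\iota} (\Gamma\backslash D)^\deff$ with $\iota$ a closed immersion of definable analytic spaces and $S^\deff\to Y^\deff$ having definable-connected fibers. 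Now o-minimal GAGA upgrades $Y^\deff$: since $\OO_{Y^\deff}$ is (by the essential-image-closed-under-quotients part of the theorem, applied to the structure sheaf of $S$ pushed forward) a coherent algebraizable sheaf of algebras, $Y$ inherits a structure of reduced finite-type algebraic space and $f:S\to Y$ a morphism of algebraic spaces; fullness of the GAGA functor gives uniqueness. Dominance of $f$ is immediate since $f^\an$ has dense image.

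\textbf{Step 2: Ampleness of the Griffiths bundle on $Y$.} The Griffiths bundle $L=\bigotimes_i \det F^i$ is a priori only defined on $D$ (or $\Gamma\backslash D$) as a $\Q$-line bundle with a natural Hermitian metric whose curvature is semipositive (Griffiths transversality) and, crucially, positive in the horizontal directions. Pulling back along $\iota$, the metrized bundle $\iota^*L$ on $Y^\deff$ is definable; by the coherence part of GAGA it is the analytification of an algebraic $\Q$-line bundle $L_Y$ on $Y$. To get ampleness I would run the standard argument: the metric on $L_Y^\an$ has semipositive curvature everywhere and is strictly positive on $Y^\sm$ because $\iota$ is an immersion (so the horizontal tangent directions of $D$ restrict to all of $TY$). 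Hence $L_Y$ is nef and big; combined with a boundary/growth estimate near a log-smooth compactification $(\overline S,E)$ — controlling the metric degeneration via the Nilpotent Orbit Theorem — one shows $L_Y$ is in fact ample after descending to the finite-type base. This is essentially the argument of \cite{BKT} made functorial through the factorization, and quasi-projectivity of $Y$ follows from ampleness of $L_Y$ plus the algebraic-space structure from Step~1.

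\textbf{Main obstacle.} The hard part will be Step~2: converting the differential-geometric positivity of the Griffiths metric (semipositive, degenerate along the non-horizontal directions of $D$) into genuine ampleness on the \emph{finite-type} quotient $Y$. One must (i) verify that $\iota$ being a closed immersion really kills the degeneracy — i.e. that $f$ contracts exactly the locus where $L$ fails to be positive, so that $L_Y$ is strictly positively curved on the smooth locus; and (ii) control the behavior near the boundary $E$, where the Hodge metric blows up logarithmically, to rule out the pathologies that prevent a merely nef-and-big bundle from being ample. The first is where the immersivity output of o-minimal GAGA is indispensable; the second requires the asymptotic analysis of period maps (Schmid's $\SL_2$-orbit theorem and the norm estimates) to show the metric singularities are mild enough that a suitable power of $L_Y$ separates points and tangent vectors. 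Once these are in place, the factorization and uniqueness statements are formal consequences of fully-faithfulness in GAGA.
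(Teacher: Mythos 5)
Your Step 1 is essentially the route the notes indicate: global definability of $\Phi$ (via the nilpotent orbit theorem and the definable fundamental set), then the factorization theorem for analytically proper definable maps from algebraic spaces --- quoted in Lecture 4 precisely as the key input --- produces $S^{\deff}\to Y^{\deff}\to (\Gamma\backslash D)^{\deff}$ with $S\to Y$ dominant algebraic and the second map a definable closed immersion, and definable GAGA (full faithfulness) gives uniqueness. Two caveats, though. First, the properness of $\Phi$ onto its image is not a formality: it is exactly the hypothesis that makes the definable-images theorem applicable (images of non-proper definable analytic maps need not be analytic), and it has to be supplied, not asserted. Second, your improvised ``Stein-type factorization plus pushforward of the structure sheaf, then close the essential image under quotients'' is circular as written: coherence of that pushforward and the existence of the factorization in the definable category is the content of the quoted theorem, so you should invoke it rather than re-derive it from GAGA.

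The genuine gap is in Step 2. ``Nef and big, plus control of the metric degeneration near a log-smooth compactification, hence ample'' is not a valid implication: nef and big bundles are in general not ample, and no growth estimate on the Hodge metric by itself shows that a power of $L$ separates points and tangent vectors, i.e.\ that the augmented base locus is empty. Your positivity claim on the smooth locus is fine (Griffiths' curvature computation in the horizontal directions, plus immersivity of $\iota$), but the passage from ``strictly positive on a dense open set, nef, big'' to ``ample'' is precisely the missing step, and it is where the proof in \cite{gaga} does something different: one extends the Griffiths bundle over a compactification (using the nilpotent orbit theorem) and argues by induction on dimension in Nakai--Moishezon style, the point being that the locus where strict positivity can fail is again controlled by period maps and so is handled by the inductive hypothesis; curvature and norm estimates enter, but only as input to that inductive/base-locus argument, not as a substitute for it. So your ``main obstacle'' paragraph correctly locates the difficulty, but the mechanism you propose to overcome it would not close it.
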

An interesting application is the following:
\begin{cor}
Let $\mathcal{M}$ be a reduced separated Deligne-Mumford stack of finite type admitting a quasi-finite period map. Then its coarse moduli space is quasi-projective.
\end{cor}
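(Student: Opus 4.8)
The plan is to deduce this corollary from the preceding theorem by descending the period map along the coarse moduli space morphism. First I would recall the setup: a reduced separated Deligne--Mumford stack $\mathcal{M}$ of finite type over $\CC$ admits a coarse moduli space $c\colon \mathcal{M}\to M$ with $M$ a separated algebraic space of finite type (Keel--Mori), and $c$ is proper and quasi-finite. The quasi-finite period map is a morphism $\Phi_{\mathcal{M}}\colon \mathcal{M}^{\an}\to \Gamma\backslash D$ (of analytic stacks), and the hypothesis is that its fibers are finite. The target $\Gamma\backslash D$ is an analytic \emph{space} (no stacky structure), so $\Phi_{\mathcal{M}}$ is constant on the automorphism groups of points of $\mathcal{M}$; hence it factors set-theoretically through $c$, and in fact uniquely as an analytic morphism $\Phi\colon M^{\an}\to \Gamma\backslash D$ by the universal property of the coarse space in the analytic category (the analytification $M^{\an}$ is the coarse space of $\mathcal{M}^{\an}$).

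Next I would check that $\Phi\colon M^{\an}\to \Gamma\backslash D$ is a genuine period map in the sense required by the theorem of \cite{gaga}, i.e. that it arises from a polarizable $\ZZ$VHS on $M$ (or is locally liftable, horizontal, with the appropriate monodromy). This is essentially automatic: the VHS on $\mathcal{M}$ descends to $M$ after passing to an \'etale cover trivializing the relevant finite automorphism action, or one simply observes that the defining local conditions (local liftability to $D$, horizontality, quasi-unipotent local monodromy at infinity) are \'etale-local on $M$ and already known upstairs on $\mathcal{M}$. Since $M$ is a reduced separated algebraic space of finite type, the theorem \cite[Thm.~1.1]{gaga} applies: $\Phi$ factors as $\iota\circ f^{\an}$ with $f\colon M\to Y$ a dominant morphism of finite-type algebraic spaces, $\iota\colon Y^{\an}\to\Gamma\backslash D$ a closed immersion, and the Griffiths line bundle on $Y$ ample; in particular $Y$ is quasi-projective.

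Finally I would use quasi-finiteness of $\Phi_{\mathcal{M}}$ to conclude that $M$ itself is quasi-projective, not merely that it maps to the quasi-projective $Y$. The composite $M^{\an}\xrightarrow{f^{\an}} Y^{\an}\xrightarrow{\iota}\Gamma\backslash D$ has finite fibers (its fibers are, up to the finite fibers of $c$, the fibers of $\Phi_{\mathcal{M}}$), and $\iota$ is injective, so $f$ has finite fibers; since $f$ is also dominant and a morphism of finite-type algebraic spaces it is quasi-finite. Then $f^*L_Y$ is a big line bundle on $M$ (pullback of an ample bundle under a quasi-finite dominant map), and more usefully one argues that $M$ is quasi-projective directly: a separated algebraic space of finite type over $\CC$ admitting a quasi-finite morphism to a (quasi-projective) scheme is itself a scheme (by e.g. \cite[Tag 0ADD]{stacks-project}-type statements, or Knutson's criterion), and a separated scheme of finite type carrying a big and nef --- in fact one upgrades to ample via the normalization/Stein-factorization of $f$ together with ampleness of $L_Y$ --- line bundle of the stated form is quasi-projective.

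The main obstacle I expect is the last step: promoting ``maps quasi-finitely to a quasi-projective variety'' to ``is quasi-projective.'' Quasi-finiteness alone does not make a morphism affine or finite, so one cannot naively pull back an ample bundle; the clean route is to take the Stein factorization $M\to M'\to Y$ of (a compactification-friendly model of) $f$, observe $M\to M'$ is an open immersion onto a quasi-projective variety after replacing $f$ by its finite part, and that $M$ being a scheme follows from $M\to Y$ being representable and quasi-finite with $Y$ a scheme. One must be slightly careful that $M$ is a priori only an algebraic space, and that the factorization of \cite[Thm.~1.1]{gaga} is stated for algebraic spaces; but once $f$ is quasi-finite and $Y$ is a scheme, $M$ is a scheme, and the argument of the theorem already produces ampleness of the Griffiths bundle on the image, which here has the same dimension as $M$, giving quasi-projectivity of $M$ itself.
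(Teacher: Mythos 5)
The notes state this corollary as an application of \cite{gaga} without giving a proof, so I am judging your argument on its own terms, and there is a genuine gap in the middle step. You claim it is ``essentially automatic'' that the induced map $\varphi\colon M^{\an}\to\Gamma\backslash D$ on the coarse space is a period map in the sense required by \cite[Thm.~1.1]{gaga}. It is not: the VHS on $\mathcal{M}$ does not descend to $M$ in general (the finite automorphism groups act nontrivially on the fibers of the local system), and local liftability is not an \'etale-local condition on $M$, because $\mathcal{M}\to M$ is not \'etale at the stacky points --- it is only, \'etale-locally on $M$, a quotient $U\to U/G$ by a finite group, and liftability of $U\to\Gamma\backslash D$ does not pass to $U/G$. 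Concretely, take $\mathcal{M}=\mathcal{A}_1$: the coarse map $\mathbb{A}^1_j\cong \operatorname{SL}_2(\Z)\backslash\mathbb{H}\to \operatorname{SL}_2(\Z)\backslash\mathbb{H}$ is the identity, yet it admits no local holomorphic lift to $\mathbb{H}$ at $j=0$ or $j=1728$, since such a lift would be a local inverse of $j$ at a critical point. So the hypothesis of the theorem genuinely fails on the coarse space, and your application of \cite[Thm.~1.1]{gaga} to $M$ is not justified as written.

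A correct route avoids working on $M$ directly: choose a finite surjective morphism $Z\to\mathcal{M}$ from a reduced scheme (Gabber's lemma, as in Laumon--Moret-Bailly or Kresch--Vistoli). The pulled-back VHS on $Z$ has an honest quasi-finite period map, so \cite[Thm.~1.1]{gaga} applies to $Z$; since $Z\to Y$ is quasi-finite and separated, Zariski's main theorem factors it as an open immersion into a scheme finite over $Y$, so the Griffiths bundle on $Z$ (the pullback of the ample bundle on $Y$) is ample. Meanwhile a suitable power of the Griffiths $\Q$-bundle of $\mathcal{M}$ descends to the coarse space $M$, its pullback to $Z$ is the ample bundle just described, and ampleness descends along the finite surjective map $Z\to M$; an algebraic space carrying an ample $\Q$-line bundle is a quasi-projective scheme, which is the assertion. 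Your final step, by contrast, is a detour: Stein factorization is a tool for proper maps and ``big and nef'' is beside the point; the clean statement is exactly the Zariski main theorem argument you eventually gesture at (quasi-finite separated over a quasi-projective scheme implies open in a finite cover, hence pullback of ample is ample and the source is a quasi-projective scheme) --- but that only helps once the period-map issue on $M$ has actually been resolved.
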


A key input in the proof is a very general result on \emph{definable images}:
\begin{theor}
Let $S$ be an algebraic space, $\mathcal{Y}$ a definable analytic space, and $f: S^{def}\to \mathcal{Y}$ an analytically proper definable analytic map. Then there exists a unique factorization of $f$ 
\begin{displaymath}
S^{\deff}\to Y^{\deff} \to \mathcal{Y}
\end{displaymath}
where $S\to Y$ is dominant algebraic, and the latter map is a definable closed immersion.
\end{theor}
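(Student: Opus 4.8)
The plan is to realise the target algebraic space $Y$ as a quotient of $S$ by an \emph{algebraic} equivalence relation, the algebraicity of that relation being forced by O-minimal GAGA, and then to recognise the quotient as an algebraic space via Artin's criteria. As a first (harmless) reduction: since $f$ is analytically proper it is a closed map, so its image $\mathcal Z:=f(S^{\deff})$ is closed, and by the definable analogue of Remmert's proper mapping theorem $\mathcal Z$ is a closed definable analytic subspace of $\mathcal Y$. The inclusion $\mathcal Z\hookrightarrow\mathcal Y$ will be the closed immersion in the factorization, so after replacing $\mathcal Y$ by $\mathcal Z$ it suffices to produce a dominant (in fact surjective) algebraic map $S\to Y$ together with an isomorphism $Y^{\deff}\cong\mathcal Z$ compatible with the maps from $S^{\deff}$.

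Next, the equivalence relation is algebraic. Put $R:=S^{\deff}\times_{\mathcal Z}S^{\deff}$. Because $\mathcal Z$ is separated, $R=(f\times f)^{-1}(\Delta_{\mathcal Z})$ is a closed definable analytic subspace of $S^{\deff}\times S^{\deff}\cong(S\times S)^{\deff}$. By definable Chow — a consequence of O-minimal GAGA, the definabilization functor being fully faithful with essential image stable under subobjects, applied to the ideal sheaf of $R$ — $R$ is the definabilization of a unique closed algebraic subspace $\mathcal R\subseteq S\times S$. Faithfulness of definabilization forces $\mathcal R$ to be an equivalence relation on $S$ (reflexivity, symmetry, transitivity are identities of morphisms, checkable after definabilization), and properness of $f$ makes both projections $\mathcal R\rightrightarrows S$ proper, each being a base change of $f$.

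One then wants $Y:=S/\mathcal R$ to be an algebraic space of finite type, with $Y^{\deff}=S^{\deff}/R=\mathcal Z$ and $S\to Y$ the quotient map. When $\mathcal R\rightrightarrows S$ is moreover flat and of finite presentation this is Artin's theorem on quotients by flat proper equivalence relations; in general one interposes a definable Stein factorization $S^{\deff}\to\mathcal Z'\to\mathcal Z$ (connected fibres, then finite), realises the finite part by $\mathbf{Spec}$ of a coherent sheaf of algebras coming from a proper pushforward via O-minimal GAGA, and reduces the connected-fibre part to the flat case by generic flatness, Noetherian induction on $S$, and flat descent — equivalently, one checks directly that the fppf quotient sheaf $S/\mathcal R$ satisfies Artin's representability criteria, the effectivity and deformation-theoretic inputs being inherited from $S$ and from properness of $\mathcal R$. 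Uniqueness is then formal: two factorizations have the same analytic image $\mathcal Z$, the induced definable isomorphism of the two candidate $Y^{\deff}$'s is compatible with the arrows from $S^{\deff}$, and by full faithfulness of definabilization it descends to a unique algebraic isomorphism over $S$.

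The genuine difficulty is concentrated in the quotient step: the fibres of $f$ are typically positive-dimensional, so $\mathcal R$ need not be $S$-flat and $S/\mathcal R$ is not a groupoid quotient one can quote off the shelf. Carrying out the Stein factorization \emph{within the definable category}, and then either flattening it by stratification or verifying Artin's criteria by hand, is where the real work lies; by contrast, the algebraicity of $\mathcal R$ itself is a clean application of O-minimal GAGA.
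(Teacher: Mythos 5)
Your two preliminary steps are sound: replacing $\mathcal Y$ by the closed definable analytic image $\mathcal Z=f(S^{\deff})$ (definable proper mapping), and algebraizing $R=S^{\deff}\times_{\mathcal Z}S^{\deff}$ inside $S\times S$ by definable Chow (modulo a separatedness hypothesis on $\mathcal Y$ needed for $R$ to be closed). The genuine gap is the identification $Y^{\deff}=S^{\deff}/R=\mathcal Z$ on which the whole construction rests, and it is false in general: the equivalence relation $\mathcal R$ only records the set-theoretic fibres of $f$, not the structure sheaf of the image, so the quotient of $S$ by $\mathcal R$ is in general strictly larger than $\mathcal Z$ (the coequalizer of $R\rightrightarrows S^{\deff}$ has structure sheaf the $R$-invariant part of $f_*\mathcal O_{S^{\deff}}$, which need not equal $\mathcal O_{\mathcal Z}$). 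Concretely, take $S=\mathbb{A}^1$, let $Y_0=\{y^2=x^3\}$ be the cuspidal cubic, $\mathcal Y=Y_0^{\deff}$, and $f$ the definabilization of the normalization $t\mapsto (t^2,t^3)$; this is finite, hence analytically proper. Then $\mathcal R=\Delta_S$, so your $Y=S/\mathcal R=\mathbb{A}^1$, and $(\mathbb{A}^1)^{\deff}\to\mathcal Y$ is not a closed immersion; the factorization the theorem demands is through $Y=Y_0$ itself. So even granting the Artin-type representability you invoke, the quotient construction produces a normalization-like cover of the image rather than the image, and the actual content of the theorem --- putting an algebraic structure on $\mathcal Z$ \emph{with its given structure sheaf} --- is untouched.

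The proposed repair does not close this, and is circular where it matters: to algebraize the finite part of a Stein factorization by applying o-minimal GAGA to a coherent algebra such as $f_*\mathcal O_{S^{\deff}}$, you need the base $\mathcal Z$ to already be the definabilization of an algebraic space, which is exactly what is to be proved. Moreover the two black boxes you defer to are not off the shelf: Stein factorization in the definable analytic category is itself a later, substantial theorem of Bakker--Brunebarbe--Tsimerman \cite{2024arXiv240816441B}, and quotients of algebraic spaces by \emph{proper non-flat} equivalence relations need not exist (Koll\'ar-type results require finiteness and seminormality-type hypotheses), so ``verifying Artin's criteria by hand'' is the difficulty itself, not a reduction; even in the flat case you would still have to check that definabilization commutes with the quotient. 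For comparison, the proof in \cite{gaga} does not realise $Y$ as a quotient at all: it algebraizes the definable analytic image directly, by d\'evissage/induction on dimension with definable GAGA/Chow and properness as the inputs. These notes only quote the statement, so there is no in-text proof to measure against, but as written your argument does not establish the theorem.
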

Since all this material is already covered in the write up of \cite{ben}, we omit the details that were discussed at CIRM. We conclude by quickly mentioning some of the more recent developments of the area.

\subsection{Recent work of  Bakker, Brunebarbe, and Tsimerman}
Another remarkable example is the Linear Shafarevich Conjecture in the quasi-projective case, by Bakker, Brunebarbe, and Tsimerman \cite{2024arXiv240816441B}, generalizing \cite{zbMATH06121648} that proved that a smooth \emph{projective} variety admitting an almost faithful representation of its fundamental group has holomorphically convex universal cover. The work of Bakker, Brunebarbe, and Tsimerman further uses the definable setting and among other things, the Ax-Schanuel theorem for abelian varieties. The authors prove that the universal cover of a normal complex algebraic variety admitting a faithful complex representation of its fundamental group is an analytic Zariski open subset of a holomorphically convex complex space.

In the process, they obtain the following:
\begin{theor}[{\cite[Thm. 1.7]{2024arXiv240816441B}}] Let $f: X\to Y$ be a proper morphism of seminormal definable analytic spaces. Then the Stein factorization of $f$ exists in the category of definable analytic spaces.
\end{theor}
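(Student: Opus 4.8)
The plan is to transplant the classical construction of the Stein factorization --- via the relative analytic spectrum --- into the category of definable analytic spaces, the only non-formal ingredients being a definable form of Grauert's direct image theorem and the compatibility of the relative $\operatorname{Specan}$ with definable structures. I would begin by noting that the assertion is local on $Y$ for the definable topology: both properness and definability of $f$ are preserved under restriction along a definable open immersion $Y' \hookrightarrow Y$, and the formation of a Stein factorization commutes with such base change, so one may take $Y$ as small as convenient and glue at the end.

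The heart of the argument is to show that $\mathcal{A} := f_*\mathcal{O}_X$ is a \emph{definable}-coherent sheaf of $\mathcal{O}_Y$-algebras, module-finite over $\mathcal{O}_Y$. This is the definable avatar of Grauert's coherence theorem, applied to the (definable-coherent) structure sheaf $\mathcal{O}_X$ and the proper definable map $f$; establishing it in the definable category is where o-minimality does the real work, guaranteeing the tameness and finiteness of the local data that enter the proof and ensuring that the resulting direct image sheaf is again definable. This should be invoked from the definable proper-mapping / ``definable images'' circle of results recalled earlier, rather than reproved.

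Granting this, I would construct $Z := \operatorname{Specan}_Y^{\deff}(\mathcal{A})$ by the usual local recipe --- over a small definable open of $Y$ present $\mathcal{A}$ as a quotient of a polynomial $\mathcal{O}_Y$-algebra by a definable-coherent ideal, realize the corresponding chart of $Z$ as the closed definable analytic subspace of $Y \times \C^n$ it defines, and glue along definable opens --- obtaining a finite definable morphism $p : Z \to Y$ with $p_*\mathcal{O}_Z \cong \mathcal{A}$. The universal property of the relative spectrum then yields a canonical $Y$-morphism $g : X \to Z$, definable by inspection of the local models, and by construction $g_*\mathcal{O}_X \cong \mathcal{O}_Z$ after the evident identifications. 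From $f = p\circ g$ with $f$ proper and $p$ finite one gets that $g$ is proper and surjective, and the isomorphism $\mathcal{O}_Z \xrightarrow{\ \sim\ } g_*\mathcal{O}_X$ forces the fibers of $g$ to be connected --- a fact one transports from the underlying complex analytic spaces, where it is the classical Stein phenomenon. Seminormality of $Z$ follows from that of $X$, since $\mathcal{A}=f_*\mathcal{O}_X$ is then a sheaf of seminormal rings; alternatively one passes to the definable seminormalization of $Z$, which exists and does not disturb the other properties. Uniqueness of the factorization is forced by the universal property of $p_*\mathcal{O}_Z = f_*\mathcal{O}_X$.

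The main obstacle is the second step: proving, or correctly invoking, the definable coherence of $f_*\mathcal{O}_X$ for a proper definable map --- a definable Grauert theorem --- and, hand in hand with it, verifying that the relative $\operatorname{Specan}$ and the comparison morphism $g$ genuinely live in the definable category and not merely in the ambient analytic one. Once the sheaf $\mathcal{A}$ is known to be definable-coherent and module-finite, the remainder is a routine transcription of the Grauert--Remmert construction, with o-minimality entering only to keep track of definability of the charts and gluing data.
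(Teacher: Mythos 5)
These lecture notes do not prove the statement at all: it is quoted verbatim from Bakker--Brunebarbe--Tsimerman \cite[Thm.~1.7]{2024arXiv240816441B}, so your sketch has to be measured against their argument. The decisive problem is the step you yourself flag as the main obstacle: the definable coherence (and module-finiteness) of $f_*\mathcal{O}_X$ for an arbitrary proper morphism of definable analytic spaces. This ``definable Grauert theorem'' cannot simply be ``invoked from the definable proper-mapping / definable images circle of results'': o-minimal GAGA and the definable image theorem recalled in Lecture 4 concern coherent sheaves on algebraic spaces and proper definable maps \emph{out of algebraic spaces}; they say nothing about higher (or even zeroth) direct images along a proper map between two definable analytic spaces, and no Grauert-type coherence theorem is available in that generality. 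Since your relative definable $\mathrm{Specan}$ construction, the finiteness of $p:Z\to Y$, and the comparison morphism $g$ are all built on top of that unproved input, the heart of the proof is missing rather than merely routine.

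A telling symptom is the role of seminormality: in your sketch it appears only at the very end, to conclude that $Z$ is seminormal, so if your route worked the theorem would hold with no seminormality hypothesis at all --- which is precisely not how the statement is formulated. In the intended argument the hypothesis is load-bearing: one starts from the classical Stein factorization of the underlying analytic map, realizes its target as the (definable, because $f$ is proper and definable) quotient of $X$ by the equivalence relation ``lying in the same connected component of a fiber'', and then uses seminormality of the spaces involved to endow this quotient with the correct definable structure sheaf, namely the definable continuous functions whose pullback to $X$ is definable holomorphic; this sidesteps any coherence statement for $f_*\mathcal{O}_X$ in the definable category. To repair your proposal you would either have to actually prove a definable Grauert theorem (a substantial result in its own right, not a transcription) or switch to the quotient-plus-seminormality construction, in which case seminormality enters where it belongs: in building $Z$, not in decorating it afterwards.
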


\begin{rmk}
Definable GAGA and the above theorem give a new proof of the algebraicity of period maps. It is interesting to observe that this approach does not depend on the global definability of the period map, and only uses the local one (namely the nilpotent orbit theorem).
\end{rmk}

The paper in question is very long, but the definable parts closer to these lectures are mainly in \cite[Sec. 2 and 3]{2024arXiv240816441B}. (Admittedly the results described in this section appeared after the lecture series)

\subsection{Concluding remarks}
The viewpoint described before, as well as functional transcendence (at least at its debut) is tightly related with o-minimality. O-minimality has been applied with success to make progress on questions in Hodge theory (Griffiths conjecture, definable period maps \cite{gaga}), and has recently had its own explosion of results guided by Binyamini (sharply o-minimal sets, the resolution of Wilkie's conjecture, see e.g. \cite{2024arXiv240516963B}). 

Applications of the Zilber–Pink viewpoint have quickly permeated various areas and captured the attention of many mathematicians; to conclude these lectures we mention a few of the most recent ones. Study of the torsion locus of the Ceresa normal function by Gao and Zhang \cite{2024arXiv240701304G}, as well as related work by Kerr-Tayou \cite{2024arXiv240619366K} and Hain \cite{2024arXiv240807809H}, has offered further advancements. Description of the maximal compact subvarieties of Siegel modular varieties \cite{2024arXiv240406009G} —an area that has seen incremental progress over many years. A Diophantine direction, driven by the Lawrence and Venkatesh method \cite{zbMATH07233321}, with later contributions by Lawrence-Sawin; Javanpeykar-Krämer-Lehn-Maculan. A  Conjecture of Matsushita on Lagrangian fibrations of hyperk\"{a}hler manifolds \cite{2022arXiv220900604B, zbMATH07102431}. The work of Gao and collaborators on the uniform Mordell-Lang conjecture \cite{2021arXiv210403431G}. Further interesting studies of special loci associated to nontorsion admissible normal function \cite{arXiv:2407.10392}.

We expect many more exciting results to come!

\newpage
\section{Exercises}\label{lect5}
\subsection{Exercises for Lecture 1}
The main goal of this section is to introduce and give a feeling of the powerful Pila-Zannier strategy.
\subsubsection{Exercise 1.1}\label{ex11}
A longer exercise: justify the claims below.
We want to give a proof of the analogue of \Cref{althm1} for the $n$-dimensional algebraic torus:
\begin{displaymath}
\pi : \C^n \to (\C^*)^n.
\end{displaymath}
First of all: what are the \emph{(weakly) special} subvarieties in this case? Compare this notion with the one of \emph{bi-algebraic} subvarieties: i.e. algebraic subvarieties $Y \subset (\C^*)^n$ that are the projection along $\pi$ of an algebraic subvariety $Y'$ of $\C^n$.

The proof crucially uses the Pila-Wilkie counting theorem \cite{pilawilkie}, which was described in another set of lectures during the week at CIRM. Before starting with the guided exercise, we recall the counting theorem.
\begin{defi}[Algebraic part]
Let $Z \subset \R^n$. The algebraic part of $Z$, denoted by $Z^{\text{alg}}$ is the union of all connected, positive-dimensional semi-algebraic subsets of $Z$. 
\end{defi}

\begin{defi}[Counting function]
For a set $Z \subset \R^n$, an integer $k \geq 1$ and a real number $T \geq 1$, we define  
\begin{displaymath}
Z(k,T) := \{z = (z_1, \cdots, z_n) \in Z \colon \max_i [\Q(z_i) \colon \Q] \leq k \,\, , \max_{i}H(z_i) \leq T \}, 
\end{displaymath}
where $H$ denotes the absolute multiplicative height of an algebraic number. Then we set $$N(Z,k,T) := \# Z(k,T).$$
\end{defi}

\begin{theor}[Pila-Wilkie]
Let $Z \subset \R^n$ be definable in $\mathbb{R}_{\operatorname{an}, \exp}$. Let $k \geq 1$ an integer and $\epsilon > 0$. Then there is a constant $c(Z,k,\epsilon)$ such that for every $T \geq 1$ we have $$ N(Z - Z^{\text{alg}},k,T) \leq c(Z,k,\epsilon) T^{\epsilon}.$$
\end{theor}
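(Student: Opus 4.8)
The plan is to run the Bombieri--Pila \emph{determinant method} in the o-minimal setting, following Pila's original argument: the two engines are the Yomdin--Gromov $C^r$-reparametrization theorem and a Vandermonde-type determinant estimate, and the whole statement is then closed up by an induction on $\dim Z$. First I would reduce to the case $k=1$: a point $z=(z_1,\dots,z_n)\in Z$ with $\max_i[\Q(z_i):\Q]\le k$ is determined by the coefficients of the minimal polynomials of the $z_i$ (of degree $\le k$), the set of such data is again definable in $\Ranexp$, and a point of $Z(k,T)$ produces a $\Q$-point of height $\le T^{O_k(1)}$ on this auxiliary definable set, all of whose Galois conjugates stay controlled. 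Then, by cell decomposition in $\Ranexp$ and by translating and rescaling, I may treat $Z$ one cell at a time and assume $Z\subset(0,1)^n$ is, after a coordinate permutation, the graph of a definable $C^\infty$ map over $(0,1)^m$ with $m=\dim Z$; if $m=0$ there is nothing to prove and if $m=n$ then $Z$ is open so $Z^{\text{alg}}=Z$, hence one may assume $0<m<n$.

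The core step produces few hypersurfaces. Fix $\epsilon>0$, choose $d$ so large that a suitable ratio of binomial coefficients beats $\epsilon$, and then $r=r(m,n,d)$ large accordingly. By the $C^r$-parametrization theorem $Z$ is covered by the images of $O(1)$ definable maps $\phi\colon(0,1)^m\to Z$ with all partials of order $\le r$ bounded by $1$ in sup-norm. On one such piece, if $D:=\binom{n+d}{n}$ rational points $P_1,\dots,P_D$ of height $\le T$ lie in a common box of side $T^{-\delta}$ in the source, form the $D\times D$ matrix of the degree-$\le d$ monomials in the coordinates evaluated at the $P_i$. Either its determinant vanishes --- so the $P_i$ lie on a common degree-$d$ hypersurface --- or one Taylor-expands $\phi$ to order $r$ on the box: the leading contribution is a Vandermonde-type determinant that vanishes, the remainder is $\lesssim T^{-\delta r}$, and this is smaller than the least nonzero value $\ge T^{-nD}$ of such a rational determinant once $r$ is large, a contradiction. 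Covering $(0,1)^m$ by $O(T^{m\delta})$ such boxes with $m\delta<\epsilon$, we conclude that $Z(1,T)$ lies on a union of $O_{Z,\epsilon}(T^\epsilon)$ hypersurfaces of degree $\le d$.

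Finally, induct on $\dim Z$. For each of these hypersurfaces $V$, consider $Z\cap V$, a definable set of dimension $\le m$. Its components of dimension $<m$ fall under the inductive hypothesis, contributing $O(T^{\epsilon/2})$ rational points outside their own algebraic parts, which are contained in $Z^{\text{alg}}$; summing over the $O(T^{\epsilon/2})$ hypersurfaces (running the whole argument with $\epsilon/2$ in place of $\epsilon$) gives the desired bound. A component $W$ of $Z\cap V$ with $\dim W=m$ is the delicate case: it is cut out inside the algebraic hypersurface $V$, and via the \emph{block}/fibered formalism of Pila--Wilkie one shows that the relevant part of $W$ is a positive-dimensional semialgebraic subset of $Z$, hence lies in $Z^{\text{alg}}$ and does not contribute to $Z-Z^{\text{alg}}$. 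Assembling, $N(Z-Z^{\text{alg}},1,T)\le c(Z,\epsilon)T^\epsilon$, and unwinding the reduction gives the statement for general $k$.

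The hard part is genuinely the $C^r$-reparametrization theorem --- the Yomdin--Gromov algebraic lemma plus its o-minimal upgrade --- which is where o-minimality (as opposed to mere smoothness) is indispensable; a close second is the bookkeeping forcing the full-dimensional intersections $W$ into $Z^{\text{alg}}$, which in a careful write-up requires the uniform block-family version of the statement rather than the naive formulation sketched above.
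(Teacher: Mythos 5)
You should first note that the paper does not prove this theorem at all: it is recalled as a black box from \cite{pilawilkie} in Exercise 1.1, to be fed into the Pila--Zannier arguments, so there is no internal proof to compare with. Your proposal is a reconstruction of the original Pila--Wilkie argument, and its architecture is the right one: reduction of bounded-degree points to rational data on an auxiliary definable set, the o-minimal Yomdin--Gromov $C^r$-reparametrization, the Bombieri--Pila determinant estimate on boxes of side $T^{-\delta}$ yielding $O_{Z,\epsilon}(T^{\epsilon})$ hypersurfaces of degree $d$, and an induction that must be run uniformly in definable families.

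There is, however, a genuine gap exactly at the step you yourself flag as delicate. If $W$ is a component of $Z\cap V$ with $\dim W=\dim Z=m$, it is simply not true that the relevant part of $W$ is a positive-dimensional semialgebraic subset of $Z$, hence inside $Z^{\text{alg}}$. Take $Z=\{(x,e^{x},0): x\in(0,1)\}\subset\R^{3}$ and $V=\{z=0\}$: here $Z\subset V$, so the determinant step may legitimately output $V$ itself, the intersection $Z\cap V=Z$ is full-dimensional, and yet $Z^{\text{alg}}=\emptyset$; discarding the full-dimensional case would throw away every point you are trying to count. In the actual proof this case is handled by recursing \emph{into} the hypersurface --- definably and uniformly in the coefficient vector $c$ one parametrizes $V_{c}$ and reduces the ambient dimension, applying the inductive (family) hypothesis there --- not by declaring those points algebraic. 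Relatedly, your ``sum over the $O(T^{\epsilon/2})$ hypersurfaces'' silently requires the constant for $Z\cap V_{c}$ to be uniform in $c$, which is precisely why the theorem must be proved in its block-family form; and note that blocks are only definable sets open in a semialgebraic set of the same dimension --- it is through that openness that positive-dimensional blocks contained in $Z$ land in $Z^{\text{alg}}$, not because the intersections themselves are semialgebraic. A minor further slip: when $m=n$ one cannot say ``$Z$ is open so $Z^{\text{alg}}=Z$''; rather $\operatorname{int}(Z)\subset Z^{\text{alg}}$ and the remaining part has dimension $<n$, to be handled by induction. So, as a sketch your outline is recognizable and the first two stages are sound, but the mechanism you give for eliminating full-dimensional intersections is false as stated and needs to be replaced by the uniform family induction inside the hypersurfaces.
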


We will now discuss a proof of the following:
\begin{theor}[Ax-Lindemann]
Let $V_1 \subset \C^n$, and $V_2 \subset (\C^*)^n$ algebraic. If $\pi (V_1)\subset V_2 $, then there exists $M \subset (\C^*)^n$ bi-algebraic such that 
\begin{displaymath}
\pi (V_1)\subset M \subset V_2.
\end{displaymath}
\end{theor}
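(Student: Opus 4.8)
The plan is to run the Pila--Zannier strategy in the simplest toric setting. First I would reduce to the statement that $\pi(V_1)$ is contained in a bi-algebraic variety, since once we know $M := \overline{\pi(V_1)}^{\textnormal{bi-alg}}$ (the smallest bi-algebraic subvariety containing $\pi(V_1)$) satisfies $\pi(V_1)\subset M$, the inclusion $M\subset V_2$ follows by replacing $V_2$ with the Zariski closure of $\pi(V_1)$ and arguing that $M$ is contained in it. The real content is therefore: if $\widetilde{Z}\subset\C^n$ is an irreducible algebraic subvariety that is a component of $\pi^{-1}(\pi(\widetilde Z))$ and maximal such, then $\widetilde Z$ is a coset of a $\Q$-linear subspace (equivalently $\pi(\widetilde Z)$ is a translate of a subtorus). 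I would set $\widetilde Z$ to be a maximal-dimensional irreducible algebraic component of $\pi^{-1}(V_2)$ containing $V_1$, and aim to show $\widetilde Z$ is a $\Q$-rational affine subspace.

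Here is the core counting argument. Let $L=L(\R)\subset\R^n\cong\C^n$ be the real linear subspace spanned by $\widetilde Z$; by compactness/definability considerations the set
\begin{displaymath}
\Theta := \{ \gamma \in \ZZ^n : \widetilde Z + \gamma \subset \pi^{-1}(V_2) \text{ shares a component with } \widetilde Z \text{ after projection}\}
\end{displaymath}
is an infinite set once $\dim\widetilde Z>0$, because $\pi(\widetilde Z)\subset V_2$ and $V_2$ is $\ZZ^n$-periodic; more precisely the stabilizer-type set $\{\gamma : \gamma + \widetilde Z = \widetilde Z\}$ would be trivial unless $\widetilde Z$ is already a coset, so I would instead count periods in a fundamental domain. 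Concretely: consider the definable set $\Sigma = \{ (x,\gamma)\in \F \times \R^n : x, x+\gamma \in \widetilde Z\}$ where $\F$ is a semialgebraic fundamental domain for the $\ZZ^n$-translation action on the relevant tube around $\widetilde Z$. Using that $\pi(V_1)\subset V_2$ forces many integer translates of $\widetilde Z$ to meet $\widetilde Z$, one shows $N(\Sigma, 1, T)\gg T^{\delta}$ for some $\delta>0$ --- a polynomial lower bound on rational (here: integer) points of bounded height. By Pila--Wilkie (the theorem quoted above), these integer points cannot all lie in $\Sigma - \Sigma^{\textnormal{alg}}$, so $\Sigma^{\textnormal{alg}}$ is nonempty and positive-dimensional; its projection to the $\gamma$-coordinate produces a positive-dimensional semialgebraic, hence (by a standard argument using that it is a subgroup-like locus) a $\Q$-linear family of genuine translation vectors preserving $\widetilde Z$. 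This yields a positive-dimensional $\Q$-rational subspace $L_0$ with $\widetilde Z + L_0 = \widetilde Z$.

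I would then close by induction on dimension: $\pi(\widetilde Z)$ is invariant under the subtorus $\pi(L_0)$, so $\widetilde Z$ descends to an algebraic subvariety $\widetilde Z / L_0 \subset \C^n/L_0$ over the quotient torus $(\C^*)^n/\pi(L_0)$, still satisfying the hypotheses of the theorem but in strictly smaller dimension (the case $\dim = 0$ or $\widetilde Z$ a single coset being trivial). By induction $\widetilde Z/L_0$ is a coset of a $\Q$-linear subspace, and pulling back, so is $\widetilde Z$; hence $M := \pi(\widetilde Z)$ is bi-algebraic and $\pi(V_1)\subset M \subset V_2$ as desired.

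The main obstacle I anticipate is making the lower bound $N(\Sigma,1,T)\gg T^\delta$ genuinely rigorous: one must check that the integer translation vectors $\gamma$ with $(\widetilde Z + \gamma)\cap \pi^{-1}(V_2)$ of the right dimension really do accumulate at a polynomial rate, which requires a dimension/degree estimate for how $V_2$'s periodicity interacts with $\widetilde Z$ (essentially a Bezout-type bound bounding the number of components, combined with the fact that $\widetilde Z$ is unbounded in the directions of $L$). The other delicate point is the step extracting an honest $\Q$-linear subspace from $\Sigma^{\textnormal{alg}}$: one needs that a positive-dimensional semialgebraic set of exact translation vectors for an algebraic variety is contained in a coset of a rational linear subspace, which I would prove by differentiating the relation $\widetilde Z + \gamma = \widetilde Z$ and using that the set of such $\gamma$ is a real algebraic subgroup of $\R^n$ whose identity component, being semialgebraic and a group, is a linear subspace defined over $\Q$ because it must be stable under the $\ZZ^n$-lattice structure.
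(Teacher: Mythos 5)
Your overall framework (pass to a maximal algebraic $\widetilde Z\subset\pi^{-1}(V_2)$ containing $V_1$, make $\pi$ definable on a fundamental domain, count integer points, apply Pila--Wilkie, show the stabilizer of $\widetilde Z$ is infinite, then quotient and induct) is the intended Pila--Zannier argument, but the set you count is the wrong one, and this breaks the proof at its heart. Your $\Sigma$ records pairs $x,\,x+\gamma\in\widetilde Z$, so its integer points (after projecting to the $\gamma$-coordinate, which you must do anyway, since Pila--Wilkie applied to $\Sigma\subset\F\times\R^n$ would also require $x$ to be rational) are merely ``chord'' vectors $\gamma\in(\widetilde Z-\widetilde Z)\cap\Z^n$, i.e.\ integer translates of $\widetilde Z$ that \emph{meet} $\widetilde Z$. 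The hypothesis $\pi(V_1)\subset V_2$ gives no lower bound for these: periodicity of $\pi^{-1}(V_2)$ only yields $\widetilde Z+\gamma\subset\pi^{-1}(V_2)$ for all $\gamma\in\Z^n$, not that these translates intersect $\widetilde Z$; in fact, if the theorem is true, the integer translates meeting $\widetilde Z$ are essentially the stabilizing ones, so ``many integer translates meet $\widetilde Z$'' is close to assuming the conclusion, and no Bezout-type estimate will supply it. Conversely, even where chord vectors are plentiful (for instance $\widetilde Z-\widetilde Z$ can be all of $\C^n$ once $2\dim\widetilde Z\ge n$), a positive-dimensional semialgebraic family of them carries no information: nothing in your argument converts ``$\widetilde Z$ meets $\widetilde Z+\gamma$'' into ``$\widetilde Z+\gamma=\widetilde Z$'', and that inference is exactly the missing step.

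The definable set to use, as in the exercise, is $I=\{v\in\R^n:\ \widetilde Z+v\subset\pi^{-1}(V_2)\ \text{and}\ (\widetilde Z+v)\cap\F\neq\emptyset\}$ (containment in $\pi^{-1}(V_2)$, not intersection with $\widetilde Z$). Its integer points are plentiful for the right reason: $\widetilde Z$ is unbounded, hence visits polynomially many $\Z^n$-translates of the fundamental domain $\F$, and each visit produces an integer $v\in I$ by periodicity; the only estimate needed is that along an unbounded algebraic curve in $\widetilde Z$ the coordinates grow polynomially in one another, giving $N(I,T)\gg T^{\delta}$. Pila--Wilkie then yields a semialgebraic curve $C\subset I$ through two integer points, and here the maximality of $\widetilde Z$ --- which you introduce but never actually use --- is essential: translating by an integer point $c_0\in C$, the union of the translates $\widetilde Z+(c-c_0)$ over the Zariski closure of $C-c_0$ is an irreducible algebraic subvariety of $\pi^{-1}(V_2)$ containing $\widetilde Z$ (containment holds over the whole complex curve because the locus of $c$ with $\widetilde Z+c\subset\pi^{-1}(V_2)$ is analytic and contains the uncountable set $C-c_0$), so by maximality it equals $\widetilde Z$; hence $\widetilde Z+c=\widetilde Z$ for all $c\in C-c_0$ and $\mathrm{Stab}_{\Z^n}(\widetilde Z)$ is infinite. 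From there your quotient-and-induct step is fine, but note that the rationality is carried by the nonzero integer stabilizing vector: the identity component of the full stabilizer (a complex subspace) need not be defined over $\Q$, contrary to what you assert. Also, $\widetilde Z$ should be a maximal irreducible \emph{algebraic subvariety} of $\pi^{-1}(V_2)$ containing $V_1$, maximal under inclusion, not a ``maximal-dimensional component'': the components of $\pi^{-1}(V_2)$ are only analytic.
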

\begin{rmk}
If the image of $\pi_1(V_2)$ is not finite index in $\pi_1((\C^*)^n)\cong \Z^n$, then $V_2$ is contained in a coset of a proper algebraic sub-torus.
\end{rmk}

The proof for the other inclusion is similar and it is also an exercise. Assume everything is irreducible, $V_2$ is not contained in any proper subtorus, $V_1$ is a maximal closed irreducible algebraic subvariety of $\pi^{-1}(V_2)$. 

Goal: $\text{Stab}_{\Z^n}(V_1)$ is infinite (it implies that $V_1$ is bi-algebraic).

Is $\pi$ definable? Can we restrict it to a smaller subset $\mathcal{F}\subset \C^n$ in order to make it definable and without loosing too much information? 

Consider the set:
\begin{displaymath}
I:=\{v \in \R^n : \dim (V_1 +v) \cap \pi^{-1}_{\mathcal{F}}(V_2)= \dim V_1\}.
\end{displaymath}
Notice that $v \in I$ if and only if $V_1 +v$ meets $\mathcal{F}$ and $V_1+v \subset\pi^{-1}(V_2)$.

\begin{enumerate}
\item $I$ is definable.
\item $V_1$ must pass through at least one fundamental domain $\mathcal{F}-v$ of each height.  So $N(I,t) \geq t+1$.
\item Apply Pila-Wilkie: there exists a real semialgebraic curve $C \subset I$ that contains at least two $\Z^n$-points.
\item $V_1 =V_1+c$ for any $c\in C$.
\item Argue by induction: $V_1$ is stabilised by a complex line $\C \subset \C^n = \C \oplus \C^{n-1}$ defined over $\Q$.
\end{enumerate}

\subsubsection{Exercise 1.2}\label{ex12}

Understand the Pila-Zannier strategy to prove the following.

\begin{theor}[Manin-Mumford for tori]
Let $G$ be the complex group $\mathbb{G}_m^n$ and $\Sigma \subset G$ be its torsion point. Let $V\subset G$ be an irreducible subvariety passing trough the identity $e\in G$. Then $V \cap \Sigma$ is Zariski dense in $V$ iff $V$ is an algebraic subgroup of $G$.
\end{theor}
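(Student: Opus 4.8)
The plan is to run the Pila--Zannier strategy, following the template of Exercise 1.1 but now exploiting the arithmetic of torsion points rather than just their geometry. The backward direction is trivial: if $V$ is an algebraic subgroup of $\Gm^n$, then it is itself a subtorus (up to the component group), and torsion points are Zariski dense in any subtorus, hence in $V$. So the content is the forward direction: assume $V\cap\Sigma$ is Zariski dense in $V$, and deduce that $V$ is an algebraic subgroup. We may freely assume $V$ is irreducible (a component through $e$) and, arguing by induction on $\dim(\Gm^n)$, that $V$ is not contained in any proper coset of a subtorus --- otherwise translate and descend to a smaller torus.

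The first step is to set up the o-minimal picture. Let $\pi:\C^n\to(\Gm)^n$, $\pi(z_1,\dots,z_n)=(e^{2\pi i z_1},\dots,e^{2\pi i z_n})$, and restrict to a fundamental set $\F$ (say $\F=([0,1)+i\R)^n$, or a definable variant) so that $\pi|_\F$ is definable in $\Ranexp$. The torsion points of $(\Gm)^n$ are exactly $\pi(\Q^n)$: a point $\zeta=(\zeta_1,\dots,\zeta_n)$ is torsion of order dividing $N$ iff its preimage in $\F$ is a vector $\tfrac1N(a_1,\dots,a_n)$ with $a_i\in\ZZ$. Consider now the complex-analytic (indeed semi-algebraic after intersecting with $\F$) set $Z:=\pi|_\F^{-1}(V)\subset\F$. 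The torsion points of $V$ lift to rational points of $Z$ of controlled height: a torsion point of exact order $N$ lifts to a point of $Z$ all of whose coordinates have height $\le N$ and lie in $\Q$. So if $V$ has ``many'' torsion points, $Z$ has many rational points of bounded height.

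The second step is the counting input. By Pila--Wilkie applied to $Z$ (with $k=1$, since all coordinates are rational), the rational points of $Z$ of height $\le T$ that do \emph{not} lie on $Z^{\alg}$ number at most $c\,T^\epsilon$. The crucial arithmetic ingredient --- the analogue of the Galois-orbit lower bound in the André--Oort setting --- is that a torsion point of exact order $N$ has at least $\gg N^{\delta}$ Galois conjugates over $\Q$ (here $\ge\varphi(N)\gg N/\log\log N$, coming from $\Gal(\Q(\mu_N)/\Q)\cong(\ZZ/N)^\times$), and these conjugates are again torsion points of $V$ of the same order (since $V$ is defined over $\Qbar$, or over a number field after a harmless descent, and the Zariski-dense set of torsion points forces $V$ to be defined over $\Qbar$ --- or one works with the Galois-stable set). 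Thus for infinitely many $N$ the set $Z(1,N)$ has cardinality $\gg N^\delta$, which for $\epsilon<\delta$ and $N$ large beats the Pila--Wilkie bound. Therefore $Z^{\alg}$ is nonempty and positive-dimensional; more precisely a positive-dimensional semi-algebraic piece of $Z$ must contain a Zariski-dense (in $V$) set of these torsion lifts.

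The third step is to upgrade $Z^{\alg}$ to genuine subgroup structure, exactly as in Exercise 1.1. A positive-dimensional connected semi-algebraic subset $B\subset Z$ is, after analytic continuation, contained in a maximal algebraic subvariety of $\pi^{-1}(V)$; by the Ax--Lindemann theorem (the torus case proved in Exercise 1.1) such a maximal algebraic subvariety is a coset of a rational linear subspace $\C^d\subset\C^n$, i.e. $\pi$ of it is a coset $c\cdot T'$ of a subtorus $T'\subsetneq(\Gm)^n$ lying inside $V$. Now I collect all such cosets: $V$ contains a Zariski-dense union of translates $c\cdot T'$ of positive-dimensional subtori. Taking a torsion point on each (they contain torsion points densely), the standard Manin--Mumford bootstrap applies: consider the stabilizer $H=\{g\in(\Gm)^n: gV=V\}$; the above shows $\dim H>0$, so $V$ is a union of cosets of $H^0$, and passing to the quotient $(\Gm)^n/H^0$ --- a torus of strictly smaller dimension --- the image $\bar V$ still has a Zariski-dense set of torsion points, so by the inductive hypothesis $\bar V$ is a subgroup, whence $V=\pi^{-1}(\bar V)$ is a subgroup of $(\Gm)^n$. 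The base case $n=1$ is clear: a proper irreducible $V\subsetneq\Gm$ is a point, and it has a dense set of torsion iff it is the identity, a subgroup.

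The main obstacle is the interplay between the counting bound and the arithmetic lower bound, and in particular making sure the Galois conjugates of a torsion point of $V$ are again in $V$ --- this forces a short argument that the Zariski closure of the torsion locus, being Galois-stable and equal to $V$, means $V$ is defined over $\Qbar$ (and in fact the relevant points have bounded degree, here degree $1$ over $\Q$ after lifting, so the Pila--Wilkie application is with $k=1$). Once the counting beats the lower bound, everything else is the by-now-routine Ax--Lindemann plus stabilizer descent; the conceptual heart is entirely in the first confrontation of $\varphi(N)\gg N^{1-o(1)}$ against $c\,N^\epsilon$.
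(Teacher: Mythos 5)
Your proof is correct and follows essentially the same route the paper intends for this exercise, namely the Pila--Zannier strategy spelled out in its six hints: definability of $\pi$ on a fundamental set, lifting torsion points of order $N$ to rational points of height at most $N$, playing Pila--Wilkie against the Galois lower bound $\varphi(N)\gg N^{1-o(1)}$ to produce a positive-dimensional semi-algebraic piece of $\pi^{-1}(V)$, then Ax--Lindemann (Exercise 1.1) and a stabilizer/induction descent. The only step you compress is the passage from ``$V$ contains a dense family of cosets of positive-dimensional subtori'' to $\dim\Stab(V)>0$, where one should first fix a single subtorus $T'$ (there are only countably many, so one recurs for a Zariski-dense set of the torsion points) and note that $\{v\in V: vT'\subset V\}$ is closed, hence all of $V$; this is exactly the content of the paper's final hint.
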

We outline here six main steps/hints. 
\begin{enumerate}
\item Make the uniformizing map $\pi: \C^n \to G$ definable on some fundamental domain $\mathcal{F}$.
\item Lift torsion points of order $N$ to points of $\mathcal{F}$ of `small height'.
\item Can we apply the Pila-Wilkie theorem to $\pi^{-1}(V)$?
\item By using Galois conjugated of torsion points (and a bound on the growth of $[\Q(\mu_N):\Q]$), use step (3) to prove that $\pi^{-1}(V)$ has to contain a semialgebraic subset. 
\item Conclude if $V$ has dimension 1, by using the previous step.
\item An induction argument.
\end{enumerate}

\subsubsection{Exercise 1.3}\label{ex13}
 Formulate the Zilber-Pink conjecture for $G$, and check that it implies the above. (Originally due to Bombieri, Masser, and Zannier in `99 \cite{zbMATH01402564} and Zilber).
\subsubsection{Exercise 1.4}\label{ex14}
 Let $C \subset Y_0(1)\times Y_0(1)$. Understand what it means for a point $p=(x,y)\in C$ to be in the Hodge locus with respect to the natural VHS given by the $H^1$ of the elliptic curves. Which points are typical, which are atypical (with respect to $C$)? The answer to the latter will depend on properties of $C$.
\subsubsection{Exercise 1.5}\label{ex15}
Show that an analogue of \Cref{thmao} can fail to be true, if $V$ is not algebraic (but just a complex submanifold of $\C^2$).

\newpage
\subsection{Exercises for Lecture 2}\label{sectionex2}
The main goal of this section is to give concrete examples of Hodge loci associated to family of varieties as well as understand the relashonship between the various conjectures stated in the lecture. 
\subsubsection{Exercise 2.1}\label{ex21}
Prove that the various definitions of Mumford-Tate groups of a Hodge structure given in \Cref{lect2} are equivalent.

\subsubsection{Exercise 2.2}\label{ex22}
Find examples of pairs $(S,\VV)$ where $\HL(S,\VV)$ is just atypical, or just typical. Where the positive dimensional part of the Hodge locus is empty, and so on.
\subsubsection{Exercise 2.3}\label{ex23}
Let $(S,\VV)$ such that $\dim S=1$ and the generic Mumford-Tate is almost simple. Can the Hodge Locus of $(S,\VV)$ be infinite? Try to classify the possible Hodge data $(G,D)$ associated to the $(S,\VV)$ give rise to an infinite typical $\HL(S,\VV^{\otimes})$. What is the maxim of the dimensions of the strict sub-Shimura varieties of $\mathcal{A}_g$ (or the minimum of the codimensions)?
\subsubsection{Exercise 2.4}\label{ex24}
 Find examples of Shimura varieties (of dimension $>1$) without positive dimensional strict sub-Shimura varieties (and therefore without special subvarieties of dimension >0). Prove that CM points exist on any Shimura variety.
\subsubsection{Exercise 2.5}\label{ex25}
Some remarks on the endomorphism of Jacobians. Denote by $\mathcal{M}_g$ the moduli space of genus $g$ curves.

 For any totally real field $K$ of degree $g$, consider the set
\begin{displaymath}
I_K:=\{x\in \mathcal{M}_g : \End(Jx)\otimes \Q=K\},
\end{displaymath}
where $Jx$ denotes the Jacobian associated to the curve $x$. What do you expect, form the Zilber-Pink philosophy, on $I_K$? E.g. is it finite or infinite? The answer depends on $g$ (cf. \Cref{ex23}).

Given an integer $k$ between $1$ and $g -1$, is the set of genus $g$ curves whose Jacobian contains a $k$-dimensional abelian subvariety dense in $\mathcal{M}_g$? Find other examples of similar Hodge theoretic properties of Jacobians and understand if correspond to typical or atypical intersections.
\subsubsection{Exercise 2.6}\label{ex26}
 Show that the Griffiths transversality condition is necessary for Zilber-Pink for VHS to possibly hold true.
\subsubsection{Exercise 2.7}\label{ex27}
A longer exercise on the Noether-Lefchetz locus. Let
$U_d=\mathbb{P}H^0(\mathbb{P}^3, \Oo(d))-\Delta$ be the scheme parametrizing
smooth surfaces $X$ of degree $d$ in $\mathbb{P}^3$. Consider the so called
\emph{Noether-Lefschetz locus}: 
\begin{displaymath}
\NL_d:=\{[X]\in U_{2,d} : \operatorname{Pic}(\mathbb{P}^3)\to
\operatorname{Pic}(X) \text{  is not an isomorphism} \}. 
\end{displaymath}
Each $X$ outside $\NL_d$ has the following pleasant and useful
property: every curve on $X$ is the complete intersection of $X$ with
another surface in $\mathbb{P}^3$.  
\begin{itemize}
\item Try to prove Noether's theorem: $\NL_d$ is a countable union of strict
irreducible algebraic subsets of $U_d$. (Or disprove it, if $d$ is too small)
\item Prove that $\NL_d$ lies in the opportune Hodge locus.
\item Let $Y$ be a component of $\NL_d$ ($d\geq 4$). What do you expect about its codimension? More precisely, what can we say on $a,b$ such that
\begin{displaymath}
a \leq \codim_{U_d}Y \leq b?
\end{displaymath}
\item What happens if $d\geq 5$?
\end{itemize}

\subsubsection{Exercise 2.8}\label{ex28}
Should the Hodge locus come equipped with a non-reduced structure? Can this be achieved?
\subsubsection{Exercise 2.9}\label{ex29}
Does the Zilber-Pink conjecture stated in \Cref{lect2} implies all the results from \Cref{lect1}? Understand exactly the link between the ZP for VHS and the one for $\mathcal{A}_g$, semi-abelian varieties etc.
\subsubsection{Exercise 2.10}\label{ex210}
Show that \Cref{main conj} implies \Cref{conj-typical}.
\subsubsection{Exercise 2.11}\label{ex211}
Prove \Cref{hodgethm}. The main idea is to transform two typical intersections into an atypical one, by working in the product of the two associated period domains.
\subsubsection{Exercise 2.12}\label{ex212}
What does \Cref{main conj} predict about the subset of the Hodge locus of $(S,\VV)$ given by
\begin{displaymath}
\{s\in S(\C) : \operatorname{MT}(\VV_s) \text{  is  commutative} \}?
\end{displaymath}
The answer is the so called \emph{André-Oort conjecture} for VHS and includes \Cref{thmao} as a special case. A proof in level one has recently been obtained in \cite{2021arXiv210908788P}.

\newpage
\subsection{Exercises for Lecture 3}
In this section we will explore the basic properties of the weakly special subvarieties of $(S,\VV)$ and try to better understand the various Ax-Schanuel theorems that appeared so far.
\subsubsection{Exercise 3.1}\label{ex31}
Recall the fixed-part theorem and the semisimplicity theorems (e.g. \cite[Sec. 4]{MR0498551}) for VHS. Prove the following, due to Deligne and André \cite[Thm. 1]{MR1154159} (see also \cite{MR0498551}). Let $\VV$ be a $\Z$VHS on $S$. For any closed point $s\in S$, let $\mathbf{G}_s$ be the Mumford-Tate group of $\VV$ at $s$, and $\mathbf{H}_{\VV,s}$ the monodromy at $s$.
\begin{theor}[André-Deligne Monodromy Theorem]\label{monodromytheorem}
For a generic $s \in S$, the monodromy group $\mathbf{H}_{\VV,s}$ is a normal subgroup of the derived subgroup of $\mathbf{G}_s$.
\end{theor}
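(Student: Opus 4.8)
The plan is to realize the monodromy group $\mathbf{H}_{\VV,s}$ inside the Mumford--Tate group $\mathbf{G}_s$ using the theorem of the fixed part, and then exploit the Tannakian/Hodge-theoretic structure to see that it must in fact land in the derived group $\mathbf{G}_s^{\der}$ and be normal there. First I would pass to a connected finite étale cover so that the monodromy representation $\rho\colon\pi_1(S,s)\to\GL(\VV_s)$ has image with Zariski closure exactly the connected group $\mathbf{H}:=\mathbf{H}_{\VV,s}$ (this is harmless as it does not change the generic Mumford--Tate group nor the identity component of the monodromy). Fix a Hodge-generic point $s$, so that $\mathbf{G}_s$ equals the generic Mumford--Tate group $\mathbf{G}$ of $(S,\VV)$ and $\MT(\VV_t)\subseteq\mathbf{G}$ for all $t$.

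The key input is the \emph{theorem of the fixed part} (Deligne): for any $\Z$VHS $\mathbb{W}$ on $S$, the largest constant (trivial) sub-local-system $\mathbb{W}^{\pi_1}$ underlies a sub-Hodge structure of $\mathbb{W}_s$, and moreover the whole local system $\mathbb{W}$, viewed as a VHS, is a sum of an isotrivial part and a part with no global sections, compatibly with the Hodge filtration. I would apply this to the VHS $\mathbb{W}=\mathcal{E}nd(\VV)=\VV\otimes\VV^\vee$ (and its tensor powers / the full Tannakian category $\langle\VV\rangle^\otimes$). A class $\phi\in\End(\VV_s)$ is monodromy-invariant precisely when it is fixed by $\mathbf{H}$, i.e. lies in $\End_{\mathbf{H}}(\VV_s)$; by the fixed-part theorem such a $\phi$ is automatically a Hodge class in $\End(\VV_s)$, hence is fixed by $\MT(\VV_s)=\mathbf{G}_s$. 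In other words, on every object $T$ of $\langle\VV\rangle^\otimes$ one has $T^{\mathbf{H}}\subseteq T^{\mathbf{G}_s}$; since conversely $\mathbf{G}_s$-invariants are $\mathbf{H}$-invariants would be the wrong direction, what we actually get is the reverse containment of invariants, and a standard Tannakian argument (an algebraic subgroup is determined by the collection of its invariant subspaces in all tensor constructions, cf. the reconstruction used to define $\MT$) then forces $\mathbf{H}\subseteq\mathbf{G}_s$.

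For normality, I would use a monodromy/deformation argument: the conjugates $g\mathbf{H}g^{-1}$ for $g$ in the image of $\pi_1$ all coincide with $\mathbf{H}$ (the monodromy group is visibly normalized by the monodromy image, being its Zariski closure), and then use that the Mumford--Tate group is generated, as $s$ varies, together with the monodromy, in a way that makes $\mathbf{H}$ stable under $\mathbf{G}_s(\C)$-conjugation — concretely, the connected monodromy group is a normal subgroup of $\mathbf{G}_s$ because the quotient $\mathbf{G}_s/\mathbf{H}$ acts on the (generically) constant part and the construction of $\mathbf{G}_s$ via the special Mumford--Tate point commutes with the monodromy action; one packages this by noting that $\Lie\mathbf{H}$ is a sub-Hodge-structure of $\Lie\mathbf{G}_s$ (again fixed part applied to $\mathrm{ad}\,\VV$) which is also an ideal since it is preserved by the bracket with the generating Hodge cocharacter. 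Finally, to land in the \emph{derived} subgroup: $\mathbf{H}$ is semisimple (it is the algebraic monodromy of a polarizable VHS, hence its Lie algebra carries a polarizable Hodge structure with the semisimplicity forced by Deligne's semisimplicity theorem), so $\mathbf{H}=\mathbf{H}^{\der}$; a semisimple normal subgroup of $\mathbf{G}_s$ is automatically contained in $\mathbf{G}_s^{\der}$. The main obstacle I anticipate is getting the inclusion $\mathbf{H}\subseteq\mathbf{G}_s$ cleanly — i.e. turning "every monodromy-invariant tensor is Hodge" into a genuine containment of groups — which requires the full strength of the theorem of the fixed part applied across all tensor constructions together with the Tannakian reconstruction of $\mathbf{G}_s$; the normality and the passage to $\mathbf{G}_s^{\der}$ are then comparatively formal once semisimplicity of $\mathbf{H}$ is in hand.
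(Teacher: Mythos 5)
Your plan (fixed--part theorem plus Tannakian criteria, then semisimplicity to land in the derived group) is the right circle of ideas, and it is the route the paper's hint points to, but the two central steps are not correct as written. First, the inclusion $\mathbf{H}_{\VV,s}\subseteq\mathbf{G}_s$. The fixed--part theorem does \emph{not} say that a monodromy-invariant tensor is a Hodge class: it says that the space of flat sections of $T^{m,n}\VV$ underlies a constant sub-Hodge structure. An isotrivial VHS which is not purely of type $(0,0)$ already has flat tensors that are not Hodge, so the chain ``$\phi$ monodromy-invariant $\Rightarrow$ $\phi$ Hodge $\Rightarrow$ $\phi$ fixed by $\mathbf{G}_s$'' fails. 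Moreover, even granting $T^{\mathbf{H}}\subseteq T^{\mathbf{G}_s}$ in all tensor constructions, that inclusion of invariants runs the wrong way: reconstructing a reductive subgroup from its invariant tensors would then give $\mathbf{G}_s\subseteq\mathbf{H}$ (false, e.g.\ for isotrivial families with trivial monodromy and large Mumford--Tate group), not $\mathbf{H}\subseteq\mathbf{G}_s$. Since $\mathbf{G}_s=\MT(\VV_s)$ is the fixer of the Hodge tensors, what you must prove is precisely the inclusion you dismiss as ``the wrong direction'': every Hodge tensor at a \emph{very general} $s$ is fixed by $\mathbf{H}^{\circ}$. This is where the genericity of $s$ enters --- your argument never uses it, although the containment genuinely fails at special points: for $s$ outside the countable union of exceptional Hodge loci, a Hodge tensor at $s$ stays Hodge under parallel transport, the flat subspace it generates is a local system of Hodge classes equipped with an integral structure and a definite polarization form preserved by monodromy, so the monodromy acts on it through a finite group and $\mathbf{H}^{\circ}$ fixes it.

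Second, normality. The claim that $\Lie\mathbf{H}$ is a sub-Hodge structure of $\fg=\Lie\mathbf{G}_s$ ``by the fixed part applied to $\ad\VV$'' is unsupported: $\Lie\mathbf{H}$ is only $\Ad$-stable under monodromy, not pointwise fixed, so it is not the fixed part of the adjoint local system, and proving it is a sub-Hodge structure of $\fg$ is essentially equivalent to the normality you are trying to establish --- the appeal to ``the bracket with the generating Hodge cocharacter'' is circular. The intended argument is the paper's hint: $\mathbf{H}\trianglelefteq\mathbf{G}_s$ follows once, for every tensor space $T^{m,n}$ and every $\mathbf{H}$-character $\chi$, the eigenspace $(T^{m,n})^{\chi}$ is $\mathbf{G}_s$-stable. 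After a finite \'etale cover, $(T^{m,n})^{\mathbf{H}}$ is the fiber of the maximal constant sub-local system, hence a sub-Hodge structure by the fixed--part theorem, hence $\MT$-stable (a rational subspace is a sub-Hodge structure if and only if it is stable under the Mumford--Tate group); the nontrivial characters are eliminated using the polarization and the integral structure, and this is also exactly the point where one proves that $\mathbf{H}^{\circ}$ is semisimple --- Deligne's semisimplicity theorem for the \emph{local system} by itself only yields reductivity of $\mathbf{H}^{\circ}$, so it cannot be quoted for semisimplicity in the way you do. Once $\mathbf{H}^{\circ}$ is known to be semisimple and normal in $\mathbf{G}_s$, your final step ($\mathbf{H}^{\circ}\subseteq\mathbf{G}_s^{\der}$, and hence normality in the derived group) is indeed formal.
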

Hint: to prove that a subgroup $H\subset G \subset GL(V)$ is normal, it is enough to show that, for every tensor space $T^{m,n}$ and every $H$-character $\chi$, $(T^{m,n})^\chi$ is stable under $G$.
\subsubsection{Exercise 3.2}\label{ex32}
Prove that weakly-special subvarieties of $(S,\VV)$, as defined in the lecture, are the same as the bi-algebraic subvarieties (using the previous fact).  Prove that special subvarieties are weakly special. What is a weakly special point?

Prove that the same holds just assuming that a component of the preimage of $S $ in $D$ is definable (rather than algebraic), in some o-minimal structure expanding $\R_{\an}$.
\subsubsection{Exercise 3.3}
Understand the implications between the various statements of Ax-Schanuel type that appeared so far. For example, state the one for Shimura varieties, find an Ax-Lindemann type of statement in each setting (generalizing the one we used for tori). Prove that \Cref{thm:newAS} implies \Cref{asperiod}, and that the latter implies \Cref{astheorem}.
\subsubsection{Exercise 3.4}\label{ex35}
In this longer exercise we explain where functional transcendence come from, and why the main theorem is named after the mathematicians J. Ax and S. Schanuel.
Recall a result and a conjecture in transcendence theory:
\begin{itemize}
\item Let $z_1,\dots,z_n \in \Qbar$ be $\Q$-linearly independent,  then $\exp(z_i)_i$ are algebraically independent over $\Q$ (this is the so called Lindemann-Weierstrass theorem). That is $\text{tr.deg.}_\Q \Q(e^{z_1},\dots , e^{z_n})=n$.

\item Schanuel's conjecture gives a vast generalization of the above: Given $z_1, \dots ,z_n\in \C$ that are linearly independent over $\Q$,   the field extension
\begin{displaymath}
\Q (z_1,\dots , z_n, \exp{(z_1)},\dots, \exp{(z_n)})
\end{displaymath}
 has transcendence degree at least $n$ over $\Q$.  
\end{itemize}
To deal with Functional Transcendence, we just replace $\Q\subset \C$ by $\C \subset \C[[t_1,\dots, t_m]]$ and follow the beautiful work of Ax \cite{zbMATH03407823}.

\begin{theor}[Ax]
Let $x_1,\dots, x_n \in \C[[t_1,\dots, t_m]]$ have no constant term and be linearly independent over $\Q$. Then $\operatorname{tr.deg.}_\C \C(x_1,\dots, x_n, e^{x_1},\dots , e^{x_n})$ is at least $ n +\text{rank} \left(\frac{\partial x_i}{\partial t_j}\right)$.  
\end{theor}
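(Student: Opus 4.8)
The plan is to follow Ax's original differential-algebraic approach, working over the field $K = \C(t_1,\dots,t_m)$ equipped with the $m$ commuting derivations $\partial_j = \partial/\partial t_j$, with field of constants $\C$. First I would set up the exponential as a formal object: the $x_i \in \C[[t_1,\dots,t_m]]$ have no constant term, so $y_i := e^{x_i}$ are well-defined units in the power series ring, and they satisfy the logarithmic-derivative relations $\partial_j y_i / y_i = \partial_j x_i$ for all $i,j$. The quantity we must bound from below is $\operatorname{tr.deg.}_\C \C(x_1,\dots,x_n,y_1,\dots,y_n)$; call this differential field $F$. The strategy is to reduce to the case where $x_1,\dots,x_n$ are algebraically independent over $\C$ (if not, pass to a $\Q$-basis of the span and note that algebraic relations only drop the transcendence degree on both sides of the inequality in a controlled way — actually one argues by choosing a maximal $\Q$-linearly independent subset that is also part of a transcendence basis), and then to prove the inequality $\operatorname{tr.deg.}_\C F \geq n + \operatorname{rank}(\partial x_i/\partial t_j)$ by analyzing the module of Kähler differentials.

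The heart of the argument is a Kolchin-style / differential-Galois computation. The key step I would carry out is: suppose for contradiction that $\operatorname{tr.deg.}_\C F < n + r$ where $r = \operatorname{rank}(\partial x_i/\partial t_j)$. Then the differential forms $dx_1,\dots,dx_n, dy_1/y_1,\dots,dy_n/y_n$ — which live in $\Omega_{F/\C}$ — must satisfy a nontrivial linear relation over $F$, in fact one can extract a relation with constant coefficients by differentiating: if $\sum a_i\, dx_i + \sum b_i\, dy_i/y_i = 0$ with $a_i, b_i \in F$, apply the derivations and use that $d x_i = d x_i$ and $d(y_i)/y_i = dx_i$ forces, after manipulation, that the $b_i$ (suitably normalized) can be taken in $\C$ — this is exactly the place where one uses that the only solutions to $\partial_j u = 0$ for all $j$ are constants. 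So one obtains $\sum_i c_i\, dx_i = 0$ in $\Omega_{F/\C}$ for some $c_i \in \C$ not all zero (after absorbing the logarithmic terms), which by $\Q$-linear independence of the $x_i$ — upgraded to $\C$-linear independence of the $dx_i$ in $\Omega$ when the $x_i$ are a transcendence basis — gives a contradiction. Making the bookkeeping of the rank term $\operatorname{rank}(\partial x_i/\partial t_j)$ precise is what forces one to track not just $\Omega_{F/\C}$ but the interplay with $\Omega_{F/K_0}$ where $K_0$ is the constant-after-one-derivation subfield.

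The main obstacle, as I see it, is precisely the rank term: the clean statement "$\operatorname{tr.deg.} \geq n$" (the function-field Lindemann–Weierstrass analogue) follows from a fairly direct differential-algebra argument, but squeezing out the extra $\operatorname{rank}(\partial x_i/\partial t_j)$ requires the full strength of Ax's analysis of how the derivations act on the transcendence basis — essentially a statement that the "vertical" directions (along which the $x_i$ genuinely vary) each contribute independently. I would handle this by induction on $m$, the number of variables $t_j$: for $m=1$ the rank is $0$ or $1$ and the statement is close to Lindemann–Weierstrass plus a derivative count; for the inductive step one restricts to a generic hyperplane $t_m = \text{const}$ (or works over $\C((t_m))$ and specializes), comparing the rank before and after. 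The delicate point will be ensuring the specialization does not collapse transcendence degree, which is where one genuinely needs the power-series (as opposed to polynomial) setting and a Zariski-density argument for the locus of good specializations. An alternative I would keep in reserve is the more modern proof via the Ax–Schanuel theorem for the exponential of a commutative algebraic group read through the lens of D-varieties, but for a self-contained exposition Ax's original route is the one I would write up.
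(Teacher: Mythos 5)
First, note that the paper itself does not prove this theorem: it is quoted from Ax \cite{zbMATH03407823}, and the actual task in \Cref{ex35} is only to check its equivalence with the geometric formulation over $(\C^*)^n$. Judged as a proof of Ax's statement, your plan has a genuine gap at its central step. From $\operatorname{tr.deg}_{\C} F < n + r$ you propose to extract a relation $\sum_i c_i\, dx_i = 0$ in $\Omega_{F/\C}$ with $c_i \in \C$ not all zero, and to reach a contradiction because $\Q$-linear independence of the $x_i$ ``upgrades'' to $\C$-linear independence of the $dx_i$. That implication is false, and no contradiction should be available at that point: take $x_1 = t$ and $x_2 = \sqrt{2}\,t$, which are $\Q$-linearly independent with no constant term, yet satisfy $dx_2 = \sqrt{2}\,dx_1$; the theorem holds there comfortably, since $\operatorname{tr.deg}_\C \C(t, e^{t}, e^{\sqrt{2}t}) = 3 \geq 2 + 1$. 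The real crux of Ax's theorem is to force a relation with \emph{rational} coefficients among the $x_i$ modulo constants, so that $\Q$-linear independence can finally be invoked; your sketch never explains how complex coefficients become rational. That is exactly what Ax's key lemma on logarithmic differentials (about exact forms $\sum_i c_i\, dy_i/y_i$ with $\Q$-linearly independent constants $c_i$), or an equivalent input, is for, and it is absent from the plan.

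Two further steps would not survive being written down. The preliminary reduction ``to the case where the $x_i$ are algebraically independent over $\C$'' is not available: the hypothesis is only $\Q$-linear independence, algebraic relations among the $x_i$ are precisely the interesting case, and discarding some of them changes both $n$ and $\operatorname{rank}\left(\partial x_i/\partial t_j\right)$, so the inequality does not transfer ``in a controlled way''---making that transfer precise is essentially the theorem itself. Likewise, the induction on $m$ by specializing at a generic hyperplane $t_m = \mathrm{const}$ is not meaningful for formal power series (only $t_m = 0$ makes sense, and that specialization can collapse both the transcendence degree and the rank); the standard route instead fixes the rank term at the outset by selecting $r$ of the derivations realizing the rank, so that one may assume $m = r$, and then runs the differential-forms argument with those derivations only. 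Finally, a small point you should not gloss over: the field $\C(x_1,\dots,x_n, e^{x_1},\dots,e^{x_n})$ need not be stable under the $\partial_j$, so you must either pass to the differential subfield it generates or phrase the whole argument in terms of $\Omega_{F/\C}$ and the maps induced by the derivations.
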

Check that the above is equivalent to the following \emph{Geometric} formulation (same for AL that we discussed early on). Let $W\subset \C^n \times (\C^*)^n$ be an irreducible algebraic subvariety. Let $U$ be an irreducible analytic component of $W \cap \Pi$, where $\Pi$ is the graph of the exponentiation map.   Assume that the projection of $U$ to $(\C^*)^n$ is not contained in a translate of any proper algebraic subgroup. Then $\dim W = \dim U +n$. 
\subsubsection{Exercise 3.5}\label{ex37}
Prove \Cref{dense}, following the sketch given below.

\begin{proof}[Sketch of the proof]
Denote by $\tilde{S}$ the universal cover of $S$, and $\tilde{s}\in \tilde{S}$ a Hodge generic point. Fix $(\mathbf{H}, D_H) \subset (\mathbf{G}, D_G)$ a $\VV$-admissible Hodge sub-datum and $g\in \mathbf{G}(\R)$ such that $\tilde{s}\in g \cdot D_H$. Consider $\mathcal{ U}=\tilde{S}\cap g \cdot D_H \subset D_G$, which contains $\tilde{s}$. Since $\tilde{s}$ is Hodge generic, Ax-Schanuel (with the admissibility condition plugged in) implies that $\mathcal{ U}$ has an analytic irreducible component of the expected dimension (at $\tilde{s}$). The same holds true for any $g '$ sufficiently close to $g$, and we conclude by density of the rational points of $\mathbf{G}$ in the real ones.
 \end{proof}
 \subsubsection{Exercise 3.6}\label{ex3333}
 Check that the sets defined in \Cref{defsets} are definable. Verify all claims in \Cref{pro110}.
  \subsubsection{Exercise 3.7}\label{ex38} 
Deduce \Cref{thm00} from \Cref{pro110}. The idea is to show that the set $\{gMg^{-1}: (x,g,M)\in \Pi_2\}$ is definable and use the fact that there are at most countably many families of weakly special subvarieties to deduce the desired finiteness, from the very defining axiom of o-minimality. (See \cite[Prop. 6.6]{2021arXiv210708838B} for a similar argument).
 \subsubsection{Exercise 3.8}\label{ex39} 
 
 Deduce \Cref{geometricZP} from \Cref{thm00}. Here is where the families of weakly special subvarieties enter in the picture. A simpler intermediate step is to consider the case where the ambient monodromy group $\mathbf{H}$ is simple. Cf. \cite[End of Sec. 6.3]{2021arXiv210708838B} as well as \cite[Sec. 7.2]{2024arXiv240616628B}.
 
\subsubsection{Exercise 3.9}\label{ex399}
Using the ideas described in the previous exercises, prove \Cref{serrequestion} (it will follow from \Cref{ex37} and the André-Oort theorem for $\mathcal{A}_4$, proven for every $\mathcal{A}_g$ in \cite{MR3744855}).  
 
\newpage
\subsection{Exercises for Lecture 4}

\subsubsection{Exercise 4.1}
Prove the following, using the hints below (cf. \cite[Lem. 4.5]{sch73}):
\begin{theor}[Borel's monodromy theorem]
Let $\mathbb{V}\to \Delta^*$ be a polarized variation of pure $\Z$-Hodge structures of weight $k$ over the punctured disc $\Delta^*$, with period map $\Phi: \Delta^*\to \Gamma \backslash D$. The monodromy transformation of the local system $\mathbb{V}$ is quasi-unipotent.
\end{theor}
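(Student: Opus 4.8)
The plan is to reduce the statement to a bound on the eigenvalues of the monodromy operator $T$, and then to extract such a bound from the distance–decreasing property of the period map near the puncture. Fix a polarization $Q$ on the fibers, so that $T \in \mathrm{Aut}(V_\mathbb{Z}, Q)$; in particular the characteristic polynomial of $T$ is monic with integer coefficients, so every eigenvalue of $T$ is an algebraic integer whose Galois conjugates are again eigenvalues of $T$. By Kronecker's theorem (an algebraic integer all of whose conjugates lie on the unit circle is a root of unity), it therefore suffices to prove that every eigenvalue $\lambda$ of $T$ satisfies $|\lambda| = 1$: then all eigenvalues are roots of unity, some power $T^N$ is unipotent, and $T$ is quasi-unipotent.

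Next I would set up the picture on the universal cover. Pulling back along $\mathbb{H} \to \Delta^*$, $w \mapsto e^{2\pi i w}$, the Poincaré metric of $\Delta^*$ becomes a multiple of the standard hyperbolic metric on $\mathbb{H}$, and $\Phi$ lifts to a holomorphic map $\tilde{\Phi} : \mathbb{H} \to D$ with $\tilde{\Phi}(w+1) = T \cdot \tilde{\Phi}(w)$. Because the VHS satisfies Griffiths transversality, $\tilde{\Phi}$ is \emph{horizontal}, and the key analytic input — the Griffiths–Ahlfors–Schwarz estimate, stemming from the fact that the holomorphic sectional curvature of $D$ in horizontal directions is bounded above by a negative constant — shows that $\tilde{\Phi}$ is distance–decreasing up to a fixed constant $C$, for the hyperbolic metric on $\mathbb{H}$ and the $\mathrm{Aut}(V_\mathbb{R},Q)$-invariant Hodge metric $d_D$ on $D$.

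The key estimate is then obtained by travelling horizontally in $\mathbb{H}$ at large height: for $w = iy$ with $y$ large and any integer $k \geq 1$, the hyperbolic length of the segment from $iy$ to $iy+k$ is $k/y$, hence $d_D\big(\tilde{\Phi}(iy),\, T^k \tilde{\Phi}(iy)\big) \leq C k / y$. Choosing sequences $y_n \to \infty$ and $k_n \to \infty$ with $k_n/y_n \to 0$ (e.g.\ $y_n = n^2$, $k_n = n$) gives points $p_n := \tilde{\Phi}(iy_n) \in D$ with $d_D(p_n, T^{k_n} p_n) \to 0$. Writing $p_n = g_n \cdot p_0$ with $g_n \in \mathrm{Aut}(V_\mathbb{R},Q)$ and using invariance of $d_D$, the element $A_n := g_n^{-1} T^{k_n} g_n$ satisfies $d_D(p_0, A_n p_0) \to 0$; since the stabilizer of $p_0$ is compact (here the polarization and Hodge–Riemann positivity enter), the set $\{ g : d_D(p_0, g p_0) \leq 1 \}$ is compact, so for $n$ large the $A_n$ and their inverses have uniformly bounded operator norm $\leq M$. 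As $A_n$ is conjugate to $T^{k_n}$, every eigenvalue $\lambda$ of $T$ obeys $|\lambda|^{k_n} \leq M$ and $|\lambda|^{-k_n} \leq M$; letting $n \to \infty$ forces $|\lambda| = 1$. Together with the first paragraph this proves $T$ is quasi-unipotent.

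I expect the main obstacle to be the curvature / distance–decreasing input: establishing (or correctly invoking, as in \cite[Lem. 4.5]{sch73}) that a horizontal holomorphic map from a hyperbolic Riemann surface into $D$ is distance non-increasing up to a constant. A general holomorphic map into $D$ need not have this property, which is exactly why Griffiths transversality is indispensable; everything else — the pullback metric computation, compactness of the stabilizer, and Kronecker's theorem — is routine once that estimate is available.
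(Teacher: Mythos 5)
Your proposal is correct and follows essentially the same route as the paper's guided proof (Schmid's classical argument): reduce via integrality of the characteristic polynomial and Kronecker's theorem to showing every eigenvalue has modulus one, then exploit the distance-decreasing property of the horizontal lift $\tilde{\Phi}$ on $\mathbb{H}$ together with the $G_\R$-invariant metric on $D$ and compactness of the isotropy group. The only (cosmetic) difference is at the very end: the paper takes $k=1$ and lets $g_n^{-1}\gamma g_n$ accumulate in the compact stabilizer $M$, whereas you use growing powers $T^{k_n}$ and a uniform spectral-radius bound to force $|\lambda|=1$; both hinge on exactly the same estimate $d_D(\tilde{\Phi}(iy),T^k\tilde{\Phi}(iy))\leq Ck/y$.
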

In more concrete theorems the theorem says the following. Since $\Phi \circ p$ is locally liftable and $\Hh$ is simply connected, we have a $\tilde{\Phi}: \Hh \to D$ making the diagram commutative
\begin{center}
\begin{tikzpicture}[scale=2]
\node (A) at (-1,1) {$\Hh$};
\node (B) at (1,1) {$D=G/M$};
\node (C) at (-1,0) {$\Delta^*$};
\node (D) at (1,0) {$\Gamma \backslash D$};
\path[->,font=\scriptsize,>=angle 90]
(A) edge node[above]{$\tilde{\Phi}$} (B)
(A) edge node[right]{$p=\exp (2 \pi i -)$} (C)
(B) edge node[right]{$\pi$} (D)
(C) edge node[above]{$\Phi$} (D);
\end{tikzpicture}.
\end{center}
The fundamental group of $\Delta ^*$, seen as a transformation group of $\Hh$, is generated by translation $z\mapsto z+1$. We can choose a $\gamma \in \Gamma \subset G(\Z)\subset \Gl_n (V_\Z)$ such that
\begin{equation}
\forall z \in \Hh \ \ \ \tilde{\Phi}(z+1)=\gamma \tilde{\Phi}(z).
\end{equation}
(in our case $\gamma$ is just the image of a generator of $\pi_1(\Delta^*)$ in $\Gamma$). The theorem then says that the eigenvalues of $\gamma$ are roots of unity and a suitable power of $\gamma$ is nilpotent.

\begin{itemize}
\item Step 1: it is enough to prove that the eigenvalues of $\gamma$ have absolute value one. In fact we want to show  that the conjugacy class of $\gamma$ in $G_\R$ has an accumulation point in the compact subgroup $M \subset G_\R$.
\item Step 2: Consider the points $\{\tilde{\Phi} (i \cdot n)  = g_n M \}_{n \in \N}$ (for some $g_n \in G_\R$). We claim that the sequence $\{g_n^{-1}\gamma g_n\}$ converges to the compact subgroup $M$. 
\item  Let $d$ be a $G_\R$ invariant Riemannian distance on $D$ (suitably renormalised), prove that 
\begin{displaymath}
d (g_n^{-1}\gamma g_n M,M) = d (\gamma g_n M,g_n M)  \leq 1/n.
\end{displaymath}
(use the fact that the coset $\gamma g_n M$ corresponds to $\tilde{\Phi} (in+1)$ and that $\tilde{\Phi}$ does not increase distances...).
\end{itemize}

\subsubsection{Exercise 4.2}
Let $S\subset \overline{S}$ be a smooth compactification such that $\overline{S}-S$ is a normal crossing divisor. Let $\overline{S}_i$ be a finite open cover of $\overline{S}$ such that 
\begin{displaymath}
(\overline{S}_i, S_i:=S_i \cap \overline{S}_i) \text{ is biholomorphic to  } (\Delta^n, (\Delta^*)^r_i\times \Delta^{n-r_i}). 
\end{displaymath}
To show that $\Phi : S \to S_{\Gamma , G, M}$ is $\R_{\an, \exp}$-definable, it is enough to prove that the restriction of $\Phi$ to each $S_i$ is definable. We may also assume that $r_i=n$. The goal of this exercise is to review the proof of the following, since it is the starting point of Lecture 4, and was used in Lecture 3 as well.
\begin{theor}[Bakker-Klingler-Tsimerman]
Let $\mathbb{V}\to (\Delta^*)^n$ be a VHS (of some weight $k$), with period map $\Phi :(\Delta^*)^n\to S_{\Gamma , G,M}$. Then $\Phi $ is $\R_{\an, \exp}$-definable.
\end{theor}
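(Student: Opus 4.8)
The plan is to reduce to the one-variable case via the Nilpotent Orbit Theorem and then glue the local definable structures. First I would fix coordinates so that $\Phi: (\Delta^*)^n \to S_{\Gamma,G,M} = \Gamma\backslash D$ is the period map on a single polydisk, with monodromy operators $T_1,\dots,T_n$ around the $n$ punctures. By Borel's monodromy theorem (Exercise 4.1) each $T_j$ is quasi-unipotent; after passing to a finite cover of $(\Delta^*)^n$ (a ramified base change $z_j \mapsto z_j^{N}$, which does not affect definability of the image) I may assume each $T_j$ is unipotent, with logarithm $N_j = \log T_j$ a nilpotent endomorphism of $V_\C$. The $N_j$ commute. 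Lifting to the universal cover $\Hh^n \to (\Delta^*)^n$, $z_j = \exp(2\pi i \tau_j)$, the holomorphic lift $\tilde\Phi: \Hh^n \to D \hookrightarrow D^\vee$ satisfies $\tilde\Phi(\tau + e_j) = T_j \tilde\Phi(\tau)$, so the "untwisted" map
\[
\psi(\tau) := \exp\Bigl(-\sum_j \tau_j N_j\Bigr)\cdot \tilde\Phi(\tau)
\]
is invariant under $\tau_j \mapsto \tau_j + 1$ and hence descends to a holomorphic map $(\Delta^*)^n \to D^\vee$.

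The heart of the argument is the Nilpotent Orbit Theorem of Schmid \cite{sch73}: $\psi$ extends holomorphically across the punctures to a map $\overline\psi : \Delta^n \to D^\vee$, and moreover for $\operatorname{Im}(\tau_j)$ large the point $\exp(\sum_j \tau_j N_j)\cdot \overline\psi(0)$ (and likewise $\tilde\Phi(\tau)$ itself) actually lies in $D$, with explicit polynomial control on how $\tilde\Phi(\tau)$ differs from the nilpotent orbit. The point is that $D^\vee$ is a projective flag variety, hence $\R_{\mathrm{alg}}$-definable, and $\overline\psi$ being holomorphic on $\Delta^n$ is definable in $\R_{\mathrm{an}}$ (restrict to $(\Delta_{1/2})^n$ first, which suffices by a further cover/shrink). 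So on the "definable fundamental set" $\mathcal F = \{\tau : |\operatorname{Re}\tau_j| \le 1/2,\ \operatorname{Im}\tau_j \ge b\}$ — a definable subset of $\Hh^n$ — the map $\tilde\Phi|_{\mathcal F}$ is the composite of: the coordinatewise $\exp(2\pi i \tau_j)$ restricted to this set (which is $\R_{\mathrm{an},\exp}$-definable, being a bounded piece of the exponential), then $\overline\psi$ (definable in $\R_{\mathrm{an}}$), then the algebraic action map $D^\vee \times \mathfrak{g} \to D^\vee$, $(\cdot,\sum \tau_j N_j)\mapsto \exp(\sum \tau_j N_j)\cdot$, which is $\R_{\mathrm{alg}}$-definable in the matrix entries of the $N_j$ — note $\exp$ of a nilpotent is a polynomial, so no transcendental function is needed here. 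Hence $\tilde\Phi|_{\mathcal F}: \mathcal F \to D$ is $\R_{\mathrm{an},\exp}$-definable.

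Finally, to pass from $\tilde\Phi|_{\mathcal F}$ to $\Phi$ itself: $\mathcal F$ surjects onto $(\Delta^*)^n$ under $\exp(2\pi i\,-)$, and $\Gamma\backslash D$ is given a definable structure by declaring $\tilde\Phi_{\mathcal F}$ (composed over finitely many charts) together with the semialgebraic structure on $D \subset D^\vee$ to be definable — this is precisely the definable structure on period spaces constructed in \cite{ben}. Then $\Phi$ is definable because on each chart $\Phi \circ \exp = \pi \circ \tilde\Phi_{\mathcal F}$ with $\pi: D \to \Gamma\backslash D$ the quotient, and $\exp$ admits a definable section onto $\mathcal F$; so $\Phi$ factors through definable maps, and one checks the finitely many charts glue (two lifts on overlapping charts differ by a fixed element of the arithmetic group $\Gamma$, a finite ambiguity to account for on the finite cover). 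The main obstacle is genuinely the input from \cite{sch73}: establishing that $\psi$ extends holomorphically over the boundary and that $\tilde\Phi$ is uniformly well-approximated by its nilpotent orbit on $\mathcal F$ — this is the analytically substantive step, and everything else is bookkeeping about which o-minimal structure each piece lives in (crucially, that only a bounded piece of $\exp$ enters, so one only needs $\R_{\mathrm{an},\exp}$ and not anything wilder).
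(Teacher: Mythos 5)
There is a genuine gap, and it sits exactly at the step you dismiss as ``bookkeeping''. Your treatment of the lift is fine and matches the intended argument: untwist by $\exp(-\sum_j\tau_jN_j)$, invoke the Nilpotent Orbit Theorem to extend $\psi$ holomorphically over $\Delta^n$, and observe that on the fundamental set $\F^n$ the retwisting is polynomial in $\tau$ while the restricted exponential is $\R_{\an,\exp}$-definable; this gives definability of $\tilde\Phi|_{\F^n}:\F^n\to D\subset D^\vee$. But the theorem asserts definability of $\Phi$ into $S_{\Gamma,G,M}=\Gamma\backslash D$ with its \emph{pre-existing} definable structure (the one constructed in \cite{ben} via reduction theory), and your way of descending is circular: you propose to ``declare'' the definable structure on $\Gamma\backslash D$ to be the one generated by $\tilde\Phi_\F$ itself. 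That is not the structure on period spaces from \cite{ben}, which is built out of Siegel fundamental sets for the arithmetic group $\Gamma$ acting on $D$, independently of any period map; with your ad hoc structure the statement becomes a tautology and says nothing about the actual target.

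Concretely, what is missing is the argument that $\pi\circ\tilde\Phi|_{\F^n}$ is definable when $\pi:D\to\Gamma\backslash D$ carries the Siegel-set definable atlas. Since $\Gamma$ is infinite, $\pi$ is not definable on all of $D$; it is definable only on sets meeting finitely many $\Gamma$-translates of the fundamental set. So one must show that $\tilde\Phi(\F^n)$ is contained in a \emph{finite} union of Siegel sets of $D$. This is precisely the content of Schmid's $\Sl_2^n$-orbit theorem (in several variables, Cattani--Kaplan--Schmid), which the exercise singles out as the crucial remaining input after the nilpotent-orbit step: the Nilpotent Orbit Theorem controls $\tilde\Phi$ relative to $D^\vee$ but says nothing about where the image sits relative to the $\Gamma$-reduction theory of $D$. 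Your note about lifts on overlapping charts differing by a single element of $\Gamma$ does not address this: the problem is within one chart, where a priori infinitely many $\gamma\in\Gamma$ could be needed to bring points of $\tilde\Phi(\F^n)$ back to a fundamental set, destroying definability of the composite. Adding the $\Sl_2^n$-orbit (Siegel set) finiteness statement, plus the fact that $\pi$ restricted to a Siegel set is $\R_{\an,\exp}$-definable, closes the gap and recovers the intended proof.
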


More generally let $\mathbb{V}$ be a local system over $(\Delta^*)^n$ and $T_i\in G(\Z)$ the monodromy transformations (=counterclockwise simple circuits around the $n$-punctures). Up to an étale covering we may assume $T_i$ to be unipotent. Let $N_i \in \mathfrak{g}_\Q$ their logarithms: they are nilpotent elements in $\mathfrak{g}_\Q$. Let $\F$ be the Siegel fundamental set for $\Hh$:
\begin{displaymath}
\F:=\{y>1 , -1/2 <x<1/2\} \subset \Hh.
\end{displaymath}
By def, the restriction of $\exp (2\pi i \cdot)$ to $\F$ is definable, so we are reduced to proving that the composition
\begin{displaymath}
\F^n \to G/M \to S_{\Gamma, G, M}
\end{displaymath}
is $\R_{\exp , an}$-definable. For the first map we have

\begin{itemize}
\item Prove that  map $\tilde{ \Phi}: \F^n \to G/M$ is $\R_{\an,\exp}$-definable, by using the nilpotent orbit theorem.

\item What is missing to complete the proof of th theorem?
\end{itemize}

A crucial input in the theory is the so called $\Sl_2^n$-\emph{orbit theorem} \cite{sch73}:
\begin{theor}[Schmid]

Let $\Phi : (\Delta^*)^n\to S_{\Gamma, G,M}$ be a period map with unipotent monodromy. Let $\tilde{\Phi}: \Hh \to G/M$ be its lifting. For any given constants $R,\eta >0$ there 
exists finitely many Siegel sets $\mathcal{G}_i \subset G/M$ such that $\Phi (z)\in \bigcup_i \mathcal{G}_i$ whenever $\operatorname{Re}(z)\leq C$ and $\operatorname{Im} (z)\geq \eta$.
\end{theor}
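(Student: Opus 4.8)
The plan is to reduce the statement to Schmid's two asymptotic results: the nilpotent orbit theorem, already cited above as \cite[(4.12)]{sch73}, and the $\Sl_2^n$-orbit theorem, whose construction is where the real work lies. Write $z=(z_1,\dots,z_n)\in\Hh^n$ with $z_j=x_j+iy_j$ (the one-variable form stated above being the case $n=1$), fix a $G_\RR$-invariant distance $d$ on $D=G_\RR^+/M$, and, after the finite \'etale base change we are already allowing, write the monodromies as $T_j=\exp(N_j)$ with commuting nilpotent $N_j\in\fg_\QQ$. The nilpotent orbit theorem provides the limit filtration $F_0:=\lim_{y_1,\dots,y_n\to\infty}\exp\!\big(-\sum_j z_jN_j\big)\tilde{\Phi}(z)\in\check{D}$ together with constants $C,\beta$ such that $d\big(\tilde{\Phi}(z),\theta(z)\big)\leq C\prod_j y_j^{\beta}e^{-2\pi y_j}$, where $\theta(z):=\exp\!\big(\sum_j z_jN_j\big)\cdot F_0$, valid once every $y_j$ exceeds some $\eta_0$. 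Since a bounded enlargement of a Siegel set is again contained in a Siegel set, on the deep region $\{x_j\leq R,\,y_j\geq\eta_0\}$ it is enough to land $\theta$ in finitely many Siegel sets; the remaining region, where some coordinates satisfy $\eta\leq y_j\leq\eta_0$ while the others tend to infinity, decomposes into finitely many pieces indexed by the faces of the orthant, each reducing to the same problem after restricting to that face (using also that a relatively compact subset of $D$ meets only finitely many $\Gamma$-translates of a fixed Siegel set).

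The heart of the matter, and what I expect to be the main obstacle, is the $\Sl_2^n$-orbit theorem itself. The data $(F_0;N_1,\dots,N_n)$ underlies a mixed Hodge structure compatibly polarized along the faces of the monodromy cone spanned by the $N_j$. Following Schmid in one variable and Cattani--Kaplan--Schmid in general, one constructs, by induction on $n$ and using at each stage Deligne's canonical commuting $\mathfrak{sl}_2$-triples attached to the successive relative weight filtrations $W(N_1+\cdots+N_k)$, a homomorphism of $\RR$-algebraic groups $\rho:\Sl_2(\RR)^n\to G_\RR$ "of Hodge type" and a point $\hat F\in D$ such that $\theta(z)$ is asymptotic to the $\Sl_2^n$-orbit $z\mapsto\rho\big(n(x_1)a(y_1),\dots,n(x_n)a(y_n)\big)\cdot\hat F$, with error controlled by the consecutive ratios $y_1/y_2,\dots,y_{n-1}/y_n,y_n$; here $n(x)$ and $a(y)$ denote the standard unipotent and diagonal one-parameter subgroups of $\Sl_2(\RR)$. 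The recursive splitting of the limiting mixed Hodge structure, the verification that these $\mathfrak{sl}_2$-triples commute appropriately, and the quantitative estimates on the error term are precisely the long technical core of Schmid's (and CKS's) work, and there is no real shortcut around it.

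To conclude: on $\{x_j\leq R,\,y_j\geq\eta\}$ the elements $n(x_j)$ remain in a fixed bounded subset of $\Sl_2(\RR)$ while $a(y_j)$ traverses the positive ray of a maximal $\RR$-split torus, so $\rho$ carries this family into $K'\!\cdot A^+$ for some compact $K'\subset G_\RR$ and a ray $A^+$ in a maximal $\RR$-split torus of $G_\RR$; conjugating by a bounded element of $G_\RR$ to move $\hat F$ to the chosen base point, the $\Sl_2^n$-orbit is then contained in a single Siegel set of $D=G_\RR^+/M$. By the first paragraph the actual image $\tilde{\Phi}(z)$ lies within bounded distance of it, hence in one further Siegel set, so $\Phi(z)$ lies in the image of that set in $\Gamma\backslash D$. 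Taking the union over the finitely many faces of the orthant $\{y_j\geq\eta\}$ --- which index the finitely many limiting $\Sl_2^k$-configurations that can occur --- produces the desired finite collection $\{\mathcal G_i\}$.
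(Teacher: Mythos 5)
The paper itself offers no proof of this statement: it is quoted (as background for the definability exercise) with the attribution to Schmid, the intended source being \cite{sch73} in one variable and Cattani--Kaplan--Schmid in several, in the Siegel-set formulation used by Bakker--Klingler--Tsimerman \cite{BKT}. Your outline follows exactly that standard derivation: reduce via the nilpotent orbit theorem to the nilpotent orbit $\theta(z)=\exp(\sum_j z_jN_j)\cdot F_0$ up to an exponentially small error (absorbed by the fact that a bounded enlargement of a Siegel set lies in a Siegel set), then approximate $\theta$ by the $\Sl_2^n$-orbit attached to the limiting mixed Hodge structure, and finally check that the orbit of the product of the standard Siegel regions of $\Sl_2(\RR)^n$ under $\rho$ lands in a Siegel set of $G/M$. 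Since you honestly defer the construction of $\rho$ and the error estimates to Schmid and Cattani--Kaplan--Schmid, your proposal is best read as a correct reduction of the Siegel-set statement to those theorems rather than an independent proof; that is consistent with how the paper treats the statement, namely as an imported input.

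Two points deserve more care if you want the deduction to be airtight. First, the several-variable $\Sl_2^n$-orbit theorem gives approximations only on sectors where the imaginary parts are ordered, with the consecutive ratios $y_{\sigma(1)}/y_{\sigma(2)},\dots$ bounded below; so before invoking it you must decompose $\{ \operatorname{Re}(z_j)\leq R,\ \operatorname{Im}(z_j)\geq \eta\}$ into the finitely many sectors indexed by orderings of the $y_j$ (this, together with your face decomposition for the directions that stay bounded, is what produces the \emph{finitely many} Siegel sets; for the bounded directions one also needs uniformity of the estimates in the compact parameters, not just the finiteness statement about $\Gamma$-translates). Second, the assertion that $\rho$ carries $\{n(x_1)a(y_1),\dots,n(x_n)a(y_n)\}$ with $x_j$ bounded and $y_j\geq\eta$ into $K'\cdot A^+$, and hence the orbit of $\hat F$ into a single Siegel set of $G/M$, is not automatic for an arbitrary homomorphism $\Sl_2(\RR)^n\to G_\RR$: it uses the specific Hodge-theoretic structure (the image of the diagonal torus is $\RR$-split in $G$, and the positivity of the weights of $\rho$ on $\fg$ coming from the relative weight filtrations guarantees the ray/parabolic condition in the definition of a Siegel set). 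This verification is precisely the step carried out in \cite{BKT}, and it should be flagged rather than folded into one sentence.
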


\subsubsection{Exercise 4.3}
Consider a family of algebraic surfaces of degree at least three. Then $X/\mathbb{P}^1$ has at least three singular fibers. 

\subsubsection{Exercise 4.4}

The goal of this exercise is to prove, using o-minimality and the previous results, Borel’s algebraicity theorem. Let $\Gamma \backslash D$ be a Mumford-Tate domain. We discussed examples where  $\Gamma \backslash D$ has an algebraic structure. Convince yourself that this is not always the case. Prove that, it can have at most one algebraic structure.

Prove the following (cf. \cite[Thm. 4.12]{BKT}):
\begin{theor}
Let $S$ be a complex algebraic variety and $\Gamma \backslash D$ be an algebraic Mumford-Tate domain (or a Shimura variety). Every $f: S(\C) \to \Gamma \backslash D(\C)$ complex analytic map is the analytification of an algebraic map $S\to \Gamma \backslash D$.
\end{theor}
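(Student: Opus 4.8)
The plan is to reduce everything to the o-minimal Chow theorem (Peterzil--Starchenko; see also the definable GAGA of \cite{gaga}): a closed complex analytic subset of $X^{\an}$, for $X$ an algebraic variety, is algebraic as soon as it is definable in some o-minimal structure. Since $\Gamma\backslash D$ is assumed algebraic, $S\times\Gamma\backslash D$ is an algebraic variety and the graph $\Gamma_f\subset S^{\an}\times(\Gamma\backslash D)^{\an}=(S\times\Gamma\backslash D)^{\an}$ is a closed analytic subset; hence it suffices to prove that $\Gamma_f$ is definable, i.e.\ that $f$ is $\Ranexp$-definable for the canonical definable structures on $S$ (algebraic, hence $\Ralg$-definable) and on $\Gamma\backslash D$ (the canonical definable structure of a Mumford--Tate domain, which on an algebraic $\Gamma\backslash D$ coincides with the definabilization, by the construction recalled in Exercise 4.2 after \cite{BKT}). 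Uniqueness is then automatic: a regular map of reduced schemes is determined by its analytification, and $\Gamma\backslash D$ admits at most one algebraic structure inducing its analytic structure --- if $Y_1^{\an}\cong Y_2^{\an}\cong\Gamma\backslash D$ with $Y_1,Y_2$ algebraic, the identity is a biholomorphism, definable for both (they induce the same canonical definable structure), so its graph in $Y_1\times Y_2$ is algebraic by the o-minimal Chow theorem and $Y_1\cong Y_2$.

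It remains to prove that $f$ is definable, which is the substantive point. I would fix a log-smooth compactification $S\subset\overline S$ with $\overline S\setminus S$ a normal crossing divisor and a finite cover of $\overline S$ by polydisk charts on which $S$ looks like $(\Delta^*)^r\times\Delta^{n-r}$; since definability is local and can be checked on a finite definable cover, it suffices to treat one such chart. Pulling back along the definable surjection $\F^r\times\Delta^{n-r}\to(\Delta^*)^r\times\Delta^{n-r}$, where $\F=\{\operatorname{Im} z>1,\,|\operatorname{Re} z|<1/2\}$ covers $\Delta^*$ via $z\mapsto e^{2\pi\sqrt{-1}z}$ and we take the identity on $\Delta^{n-r}$, reduces us to showing that the resulting holomorphic map on the simply connected $\F^r\times\Delta^{n-r}$ is definable. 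There it lifts through $\pi\colon D\to\Gamma\backslash D$ to $\tilde f\colon\F^r\times\Delta^{n-r}\to D$, and the question becomes whether $\tilde f$ is $\Ranexp$-definable as a map into $D\subset D^\vee$. Granting this, $f$ on the chart is the composite of $\tilde f$ with a definable section and with the restriction of $\pi$ to a Siegel set containing the image of $\tilde f$ --- definable by the very construction of the definable structure on $\Gamma\backslash D$ --- hence definable.

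The main obstacle is precisely this last step, the control of $\tilde f$ near the boundary. In the period-map situation (Exercise 4.2), definability of the lift is supplied by Schmid's nilpotent / $\Sl_2^n$-orbit theorem, which approximates $\tilde\Phi$ by an explicitly definable nilpotent orbit; here there is no Griffiths transversality available, and the monodromy operators $T_i\in\Gamma$ along the punctures need not even be quasi-unipotent (contrary to Exercise 4.1 for VHS), so that route is closed. One must instead use the metric estimates behind Borel's classical extension theorem: the Ahlfors--Schwarz lemma makes $\tilde f$ distance-decreasing from a metric comparable to the Poincaré metric on $\F^r\times\Delta^{n-r}$ to an invariant metric on $D$, which forces at most linear growth in the $\operatorname{Im}(z_i)$; then reduction theory (finitely many Siegel sets cover $\Gamma\backslash D$, with the invariant metric comparable to definable Siegel coordinates) confines $\tilde f$, modulo $\Gamma$, to finitely many Siegel sets. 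Upgrading this from a mere growth estimate to genuine $\Ranexp$-definability --- the analogue, in this setting, of passing from ``Lipschitz'' to ``tame'' --- is the technical heart of the matter.

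A cleaner but less self-contained route sidesteps re-deriving this: invoke Borel's extension theorem to extend $f$ to a holomorphic map $\overline f\colon\overline S\to\overline{\Gamma\backslash D}^{\mathrm{BB}}$ into a projective compactification (the Baily--Borel compactification in the Shimura case), note that $\overline f$ is holomorphic between (quasi-)projective varieties, and conclude by GAGA --- or by definable GAGA on $\overline S$ --- that $\overline f$, and hence $f=\overline f|_S$, is algebraic. Either way, the role of o-minimality is to replace the classical ``Baily--Borel plus Serre GAGA'' package with the ``canonical definable structure plus definable Chow'' package, which is what lets the argument run uniformly for quasi-projective $\Gamma\backslash D$ with no preferred compactification.
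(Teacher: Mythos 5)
Your skeleton (prove that $f$ is $\R_{\an,\exp}$-definable, then apply the definable Chow theorem to the graph inside the algebraic variety $S\times\Gamma\backslash D$, and obtain uniqueness of the algebraic structure by running the same argument on the identity map) is exactly the intended one. The genuine gap is the step you yourself call ``the technical heart'': you never prove definability of $f$, and the reason you give for abandoning the intended route is mistaken. In the situation of the exercise ($\Gamma\backslash D$ a Shimura variety, which is the relevant case of an algebraic Mumford--Tate domain), the horizontal subbundle of $T(\Gamma\backslash D)$ is the \emph{whole} tangent bundle, so Griffiths transversality is a vacuous condition: any holomorphic $f\colon S^{\an}\to\Gamma\backslash D$ is (after the usual reduction to torsion-free $\Gamma$) locally liftable and horizontal, i.e.\ it \emph{is} the period map of the $\Z$VHS on $S$ obtained by pulling back the tautological family on $\Gamma\backslash D$. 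Likewise your claim that the local monodromies $T_i\in\Gamma$ ``need not be quasi-unipotent'' is false: the argument of Exercise 4.1 uses only that the lift $\Hh\to D$ does not increase distances, and in the hermitian case this holds for an \emph{arbitrary} holomorphic map by the Schwarz--Pick/Ahlfors--Schwarz lemma for the invariant (Bergman) metric; this is Borel's monodromy theorem in exactly the generality needed here. Consequently Exercise 4.2 (Bakker--Klingler--Tsimerman definability of period maps, via the nilpotent and $\Sl_2^n$-orbit theorems) applies verbatim to $f$ and supplies the missing definability; combined with the compatibility of the Siegel-set definable structure with the algebraic one and definable Chow, this is precisely the proof of \cite[Thm. 4.12]{BKT} that the exercise points to.

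Your fallback route (Borel's extension theorem into the Baily--Borel compactification plus GAGA) is the classical proof rather than an argument ``using o-minimality and the previous results'' as the exercise requests, and as written it is only a sketch: it silently identifies the given algebraic structure with the Baily--Borel one (which is exactly the uniqueness statement you are supposed to prove), and it does not address an algebraic Mumford--Tate domain not presented with such a compactification. So as it stands the proposal does not contain a proof of the key definability statement; the fix is the observation above that $f$ is itself a period map, so that no new boundary analysis beyond Exercises 4.1 and 4.2 is required.
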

%\subsubsection{Exercise 4.1}
%\subsubsection{Exercise 4.2}
%\subsubsection{Exercise 4.3}
%\subsubsection{Exercise 4.4}
%\subsubsection{Exercise 4.5}
%\subsubsection{Exercise 4.6}
%\subsubsection{Exercise 4.7}

\newpage

\bibliographystyle{abbrv}

\bibliography{biblio.bib}

\Addresses

\end{document}